\newtheorem{thm}{Theorem}[section]
\newtheorem{lem}[thm]{Lemma}
\newtheorem{prop}[thm]{Proposition}
\newtheorem{cor}[thm]{Corollary}
\newtheorem*{theorem*}{Theorem}
\theoremstyle{remark}
\newtheorem{rem}[thm]{Remark}
\newtheorem{defn}[thm]{Definition}
\newtheorem{ex}[thm]{Example}
\numberwithin{equation}{section}
\newcommand{\g}{\mathcal{G}}
\newcommand{\go}{\mathcal{G}^0}
\newcommand{\gl}{\mathcal{G}^1}
\newcommand{\pGo}{\Phi(G^0)}
\newcommand{\gh}{\mathcal{G}/(H,B)}
\newcommand{\pgo}{\Phi(\mathcal{G}^0)}
\newcommand{\pgl}{\Phi(\mathcal{G}^1)}
\newcommand{\sg}{\Phi_{\mathrm{sg}}(G^0)}
\begin{document}
\title[Quotients of Ultragraph $C^*$-Algebras]{Quotients of Ultragraph $C^*$-Algebras}

\author[Hossein Larki]{Hossein Larki}

\address{Department of Mathematics\\
Faculty of Mathematical Sciences and Computer\\
Shahid Chamran University of Ahvaz\\
 Iran}
\email{h.larki@scu.ac.ir}


\date{\today}

\subjclass[2010]{46L55}

\keywords{Quotient ultragraph, ultragraph $C^*$-algebra, graph $C^*$-algebra, ideal, primitive ideal}

\begin{abstract}
Let $\g$ be an ultragraph and let $C^*(\g)$ be the associated $C^*$-algebra introduced by Mark Tomforde. For any gauge invariant ideal $I_{(H,B)}$ of $C^*(\g)$, we analyze the structure of the quotient $C^*$-algebra $C^*(\g)/I_{(H,B)}$. For simplicity's sake, we first introduce the notion of quotient ultragraph $\gh$ and an associated $C^*$-algebra $C^*(\gh)$ such that $C^*(\gh)\cong C^*(\g)/I_{(H,B)}$. We then prove the gauge invariant and the Cuntz-Krieger uniqueness theorems for $C^*(\gh)$ and describe primitive gauge invariant ideals of $C^*(\gh)$.
\end{abstract}

\maketitle

\section{Introduction}

In order to bring graph $C^*$-algebras and Exel-Laca algebras together under one theory, Tomforde introduced in \cite{tom} the notion of ultragraphs and associated $C^*$-algebras. An ultragraph is basically a directed graph in which the range of each edge is allowed to be a nonempty set of vertices rather than a single vertex. However, the class of ultragraph $C^*$-algebras are strictly lager than the graph $C^*$-algebras as well as the Exel-Laca algebras (see \cite[Section 5]{tom2}). Due to some similarities, parts of fundamental results for graph $C^*$-algebras, such as the Cuntz-Krieger and the gauge invariant uniqueness theorems, simplicity, and $K$-theory computation have been extended to the setting of ultragraphs \cite{tom,tom2}.

Furthermore, by constructing a specific topological quiver $\mathcal{Q}(\g)$ from an ultragraph $\g$, Katsura et al. described some properties of the ultragraph $C^*$-algebra $C^*(\g)$ using those of topological quivers \cite{kat3}. In particular, they showed that every gauge invariant ideal of  $C^*(\g)$ is of the form $I_{(H,B)}$ corresponding to an admissible pair $(H,B)$ in $\g$. Recall that the graph $C^*$-algebras (and also the ultragraph $C^*$-algebras) are simple examples of the topological quiver algebras \cite{muh} in which the sets of vertices and edges are considered with discrete topology and Radon measures are special counting measures. So, it seems that working with topological quiver algebras is more complicated than with the ultragraph $C^*$-algebras.

Recall that for any gauge invariant ideal $I_{(H,B)}$ of a graph $C^*$-algebra $C^*(E)$, there is a (quotient) graph $E/(H,B)$ such that $C^*(E)/I_{(H,B)}\cong C^*(E/(H,B))$ (see \cite {bat2,bat}). So, the class of graph $C^*$-algebras contains such quotients, and we may apply the results and properties of graph $C^*$-algebras for their quotients. For examples, some contexts such as simplicity, $K$-theory, primitivity, and topological stable rank are directly related to the structure of ideals and quotients.

Unlike the $C^*$-algebras of graphs and topological quivers, the class of ultragraph $C^*$-algebras is not closed under quotients. This causes some obstacles in studying the structure of ultragraph $C^*$-algebras. The aims of this article are to prove two kinds of uniqueness theorems, known as the gauge invariant and the Cuntz-Krieger uniqueness theorems, for quotients of ultragraph $C^*$-algebras and apply them for analyzing the ideal structure of an ultragraph $C^*$-algebra. Suppose that $I_{(H,B)}$ is a gauge invariant ideal of an ultragraph $C^*$-algebra $C^*(\g)$. For the sake of convenience, we first introduce the notion of quotient ultragraph $\gh$ and a relative $C^*$-algebra $C^*(\gh)$ such that $C^*(\g)/I_{(H,B)}\cong C^*(\gh)$ and then prove the gauge invariant and Cuntz-Krieger uniqueness theorems for $C^*(\gh)$. The $C^*$-algebra $C^*(\gh)$ is called a {\it quotient ultragraph $C^*$-algebra}. The uniqueness theorems help us to show when a representation of $C^*(\gh)$ is injective. We see that the structure of $C^*(\gh)$ is close to that of graph and ultragraph $C^*$-algebras. Hence, many traditional graph $C^*$-algebraic techniques may be extended to the ultragraph setting by applying quotient ultragraphs (see Sections 6 and 7). We note that the initial idea for defining quotient ultragraphs has been inspired from quotient labelled graphs in \cite{jeo}. However, many main results of this paper are not known for labelled graphs. Moreover, any restrictive condition such as set-finiteness or receiver set-finiteness does not be assumed here for ultragraphs.

This article is organized as follows. We begin in Section 2 by giving some definitions and preliminaries about the ultragraphs and their $C^*$-algebras which will be used in the next sections. In Section 3, for any admissible pair $(H,B)$ in an ultragraph $\g$, we introduce the quotient ultragraph $\gh$ and an associated $C^*$-algebra $C^*(\gh)$. For this, the ultragraph $\g$ is modified by an extended ultragraph $\overline{\g}$ and we define an equivalent relation $\sim$ on $\overline{\g}$. Then the quotient ultragraph $\gh$ is ultragraph $\overline{\g}$ with the equivalent classes $\{[A]: A\in \overline{\g}^0\}$. In Section 4, by approaching with graph $C^*$-algebras, the gauge invariant and the Cuntz-Krieger uniqueness theorems will be proved for the quotient ultragraphs $C^*$-algebras.

In Section 5, we describe the gauge invariant ideals of $C^*(\gh)$. We first prove that $C^*(\gh)$ is isometrically isomorphic to the quotient $C^*$-algebra $C^*(\g)/I_{(H,B)}$. We then see that every gauge invariant ideal $I_{(K,S)}$ in $C^*(\g)$ with $K\supseteq H$ and $K\cup S \supseteq B$ induces an ideal $J_{[K,S]}$ in $C^*(\gh)$ and all gauge invariant ideals of $C^*(\gh)$ are of this form. In Sections 6 and 7, using quotient ultragraphs, we can apply some graph $C^*$-algebra methods for the ultragraph $C^*$-algebras. In Section 6, we define Condition (K) for $\gh$ under which all ideals of $C^*(\gh)$ are gauge invariant and the real rank of $C^*(\gh)$ is zero. As a corollary, we can recover both \cite[Proposition 7.3]{kat3} and \cite[Proposition 5.26]{kat2} by a different approach. Finally, in Section 7, all primitive gauge invariant ideals of ultragraph $C^*$-algebras will be characterized.


\section{preliminaries}

In this section, we review basic definitions and properties of ultragraph $C^*$-algebras which will be needed through the paper. For more details, we refer the reader to \cite{tom} and \cite{kat3}.

\begin{defn}[\cite{tom}]
An \emph{ultragraph} is a quadruple $\g=(G^0,\gl,r_\g,s_\g)$ consisting of a countable vertex set $G^0$, a countable edge set $\gl$, the source map $s_\g: \gl \rightarrow G^0$, and the range map $r_\g:\gl \rightarrow \mathcal{P}(G^0)\setminus \{\emptyset\}$, where $\mathcal{P}(G^0)$ is the collection of all subsets of $G^0$. If $r_\g(e)$ is a singleton vertex for each edge $e\in \gl$, then $\g$ is an ordinary (directed) graph.
\end{defn}

For our convenience, we use the notation $\go$ of \cite{kat3} rather than \cite{tom,tom2}. For any set $X$, a nonempty subcollection of the power set $\mathcal{P}(X)$ is said to be an \emph{algebra} if it is closed under the set operations $\cap$, $\cup$, and $\setminus$. If $\g$ is an ultragraph, the smallest algebra in $\mathcal{P}(G^0)$ containing $\{\{v\}:v\in G^0\}$ and $\{r_\g(e):e\in \gl\}$ is denoted by $\go$. We simply denote every singleton set $\{v\}$ by $v$. So, $G^0$ may be considered as a subset of $\go$.

\begin{defn}
For each $n\geq 1$, a \emph{path $\alpha$ of length $|\alpha|=n$} in $\g$ is a sequence $\alpha=e_1\ldots e_n$ of edges such that $s(e_{i+1})\in r(e_i)$ for $1\leq i\leq n-1$. If also $s(e_1)\in r(e_n)$, $\alpha$ is called a \emph{loop} or a \emph{closed path}. We write $\alpha^0$ for the set $\{s_\g(e_i): 1\leq i\leq n\}$. The elements of $\go$ are considered as the paths of length zero. The set of all paths in $\g$ is denoted by $\g^*$. We may naturally extend the maps $s_\g,r_\g$ on $\g^*$ by defining $s_\g(A)=r_\g(A)=A$ for $A\in \go$, and $r_\g(\alpha)=r_\g(e_n)$, $s_\g(\alpha)=s_\g(e_1)$ for each path $\alpha=e_1\ldots e_n$.
\end{defn}

\begin{defn}[\cite{tom}]\label{defn2.3}
Let $\g$ be an ultragraph. A \emph{Cuntz-Krieger $\g$-family} is a set of partial isometries $\{s_e:e\in\gl\}$ with mutually orthogonal ranges and a set of projections $\{p_A:A\in\go\}$ satisfying the following relations:
\begin{enumerate}
\item $p_\emptyset=0$, $p_A p_B=p_{A\cap B}$, and $p_{A\cup B}=p_A +p_B-p_{A\cap B}$ for all $A,B\in\go$,
\item $s_e^* s_e=p_{r_\g(e)}$ for $e\in\gl$,
\item $s_e s_e^*\leq p_{s_\g(e)}$ for $e\in \gl$, and
\item $p_v=\sum_{s_\g(e)=v}s_e s_e^*$ whenever $0<|s_\g^{-1}(v)|<\infty$.
\end{enumerate}
The $C^*$-algebra $C^*(\g)$ of $\g$ is the (unique) $C^*$-algebra generated by a universal Cuntz-Krieger $\g$-family.
\end{defn}

By \cite[Remark 2.13]{tom}, we have
$$C^*(\g)=\overline{\mathrm{span}}\left\{s_\alpha p_A s_\beta^*:\alpha,\beta\in \g^*, A\in \go, \mathrm{~and~} r_\g(\alpha)\cap r_\g(\beta)\cap A\neq \emptyset \right\},$$
where $s_\alpha:=s_{e_1}\ldots s_{e_n}$ if $\alpha=e_1\ldots e_n$, and $s_\alpha:=p_A$ if $\alpha=A$.

\begin{rem}\label{rem2.4}
As noted in \cite[Section 3]{tom}, every graph $C^*$-algebra is an ultragraph $C^*$-algebra. Recall that if $E=(E^0,E^1,r_E,s_E)$ is a directed graph, a collection $\{s_e,p_v:v\in E^0, e\in E^1\}$ containing mutually orthogonal projections $p_v$ and partial isometries $s_e$ is called a Cuntz-Krieger $E$-family if
\begin{enumerate}
\item $s_e^* s_e=p_{r_E(e)}$ for all $e\in E^1$,
\item $s_e s_e^*\leq p_{s_E(e)}$ for all $e\in E^1$, and
\item $p_v=\sum_{s_E(e)=v}s_e s_e^*$ for every vertex $v\in E^0$ with $0<|s_E^{-1}(v)|<\infty$.
\end{enumerate}
We denote by $C^*(E)$ the universal $C^*$-algebra generated by a Cuntz-Krieger $E$-family.
\end{rem}

By the universal property, $C^*(\g)$ admits the \emph{gauge action} of the unit circle $\mathbb{T}$. By an \emph{ideal}, we mean a closed two-sided ideal. Using the properties of quiver $C^*$-algebras \cite{kat3}, the gauge invariant ideals of $C^*(\g)$ were characterized in \cite[Theorem 6.12]{kat3} via a one-to-one correspondence with the admissible pairs of $\g$.

\begin{defn}\label{defn2.5}
A subset $H\subseteq \go$ is said to be \emph{hereditary} if the following properties holds:
\begin{enumerate}
\item $s_\g(e)\in H$ implies $r_\g(e)\in H$ for all $e\in \gl$.
\item $A\cup B\in H$ for all $A,B\in H$.
\item If $A\in H,~B\in\go$, and $B\subseteq A$, then $B\in H$.
\end{enumerate}
Moreover, a subset $H\subseteq \go$ is called \emph{saturated} if for any $v\in G^0$ with $0<|s_\g^{-1}(v)|<\infty$, then $r_\g(s_\g^{-1}(v))\subseteq H$ implies $v\in H$. The \emph{saturated hereditary closure} of a subset $H\subseteq \go$ is the smallest hereditary and saturated subset $\overline{H}$ of $\go$ containing $H$.
\end{defn}

Let $H$ be a saturated hereditary subset of $\go$. The set of \emph{breaking vertices of $H$} is denoted by
$$B_H:=\left\{ w\in G^0: |s_\g^{-1}(w)|=\infty ~~\mathrm{but}~~ 0<|r_\g(s_\g^{-1}(w))\cap (\go\setminus H)|<\infty\right\}.$$
Following \cite{kat2}, an \emph{admissible pair $(H,B)$ in $\g$} is a saturated hereditary set $H\subseteq \go$ together with a subset $B\subseteq B_H$. For any admissible pair $(H,B)$ in $\g$, we define the ideal $I_{(H,B)}$ of $C^*(\g)$ generated by
$$\{p_A:A\in \go\} \cup \left\{p_w^H:w\in B\right\},$$
where $p_w^H:=p_w-\sum_{s_\g(e)=w,~ r_\g(e)\notin H}s_e s_e^*$. Note that the ideal $I_{(H,B)}$ is gauge invariant and \cite[Theoerm 6.12]{kat1} implies that every gauge invariant ideal $I$ of $C^*(\g)$ is of the form $I_{(H,B)}$ for
$$H:=\left\{A:p_A\in I\right\} ~~ \mathrm{and} ~~ B:=\left\{ w\in B_H: p_w^H\in I\right\}.$$


\section{Quotient Ultragraphs and their $C^*$-algebras}

In this section, for any admissible pair $(H,B)$ in an ultragraph $\g$, we introduce the quotient ultragraph $\g/(H,B)$ and its relative $C^*$-algebra $C^*(\g/(H,B))$. We will show in Proposition \ref{prop5.1} that $C^*(\g/(H,B))$ is isometrically isomorphic to the quotient $C^*$-algebra $C^*(\g)/ I_{(H,B)}$.

Let us fix an ultragraph $\g=(G^0,\g^0,r_\g,s_\g)$ and an admissible pair $(H,B)$ in $\g$. For defining our quotient ultragraph $\gh$, we first modify $\g$ by an extended ultragraph $\overline{\g}$ as follows. Add the vertices $\{w':w\in B_H\setminus B\}$ to $G^0$ and denote $\overline{A}:=A\cup \{w':w\in A\cap (B_H\setminus B)\}$ for each $A\in \go$. We now define the new ultragraph $\overline{\g}=(\overline{G^0},\overline{\g}^1,r',s')$ by
\begin{align*}
\overline{G}^0&:= G^0\cup \{w':w\in B_H\setminus B\},\\
\overline{\g}^1&:=\gl,
\end{align*}
the source map
\[s'(e):=\left\{
          \begin{array}{ll}
            (s_\g(e))' & \mathrm{if}~~ s_\g(e)\in B_H\setminus B ~~\mathrm{and}~~ r_\g(e)\in H \\
            s_\g(e) & \mathrm{otherwise},
          \end{array}
        \right.
\]
and the rang map $r'(e):=\overline{r_\g(e)}$ for every $e\in \g^1$. In Proposition \ref{prop3.2} below, we will see that the $C^*$-algebras of $\g$ and $\overline{\g}$ coincide.

\begin{ex}
Suppose $\g$ is the ultragraph
\begin{center}
\begin{tikzpicture}[scale=1.8,shorten >=1pt,auto,thick]
\node (w) at (0,0) {$w$};
\node (u) at (-1,0) {$u$};
\node (v) at (1,1) {$v$};
\node (A) at (1,0) {$A$};

\draw[->] (w) -- node[below=1pt] {$f$} (A);
\draw[->] (w) -- node[below=2pt] {$(\infty)$} (v);
\path[->] (u) edge[bend left] node[above=2pt] {$e$} (v);
\path[->] (u) edge[bend left] node[above=1pt] {$e$} (w);
\path[->] (w) edge node[below=2pt] {$g$} (u);

\draw[dashed] (1,.5) ellipse (3mm and 8mm)
node[right=5mm] {$H$};

\end{tikzpicture}
\end{center}
where $(\infty)$ indicates infinitely many edges. If $H$ is the saturated hereditary subset of $\go$ containing $\{v\}$ and $A$, then we have $B_H=\{w\}$. For $B:=\emptyset$, consider the admissible pair $(H,\emptyset)$ in $\g$. Then the ultragraph $\overline{\g}$ associated to $(H,\emptyset)$ would be
\begin{center}
\begin{tikzpicture}[scale=1.8,shorten >=1pt,auto,thick]
\node (wG) at (3.5,0) {$w$};
\node (w'G) at (4,0) {$w'$};
\node (uG) at (2.5,0) {$u$};
\node (vG) at (5,1) {$v$};
\node (AG) at (5,0) {$A$};

\draw[->] (w'G) -- node[below=2pt] {$(\infty)$} (vG);
\draw[->] (w'G) -- node[below=1pt] {$f$} (AG);
\path[->] (uG) edge[bend left] node[above=2pt] {$e$} (vG);
\path[->] (uG) edge[bend left] node[above=1pt] {$e$} (wG);
\path[->] (wG) edge node[below=.5pt] {$g$} (uG);
\path[->] (uG) edge[bend right] node[below=1pt] {$e$} (w'G);

\draw[dashed] (5,.5) ellipse (3mm and 8mm)
node[right=5mm] {$H$};
\end{tikzpicture}
\end{center}
Indeed, since $B_H\setminus B=\{w\}$, for constructing $\overline{\g}$ we first add a vertex $w'$ to $\g$. We then define
\begin{align*}
r'(f)&:=\overline{A}=A,\\
r'(e)&:=\overline{\{v,w\}}=\{v,w,w'\}, \mathrm{~ and}\\
r'(g)&:=\overline{\{u\}}=\{u\}.
\end{align*}
For the source map $s'$, for example, since $s_\g(f)\in B_H\setminus B$ and $r_\g(f)\in H$, we may define $s'(f):=w'$. Note that the range of each edge emitted by $w'$ belongs to $H$.
\end{ex}

As usual, we write $\overline{\g}^0$ for the algebra generated by the elements of $\overline{G}^0 \cup \{r'(e):e\in \overline{\g}^1\}$. Note that $\overline{A}=A$ for every $A\in H$, and hence, $H$ would be a saturated hereditary subset of $\overline{\g}^0$ as well. Moreover, the set of breaking vertices of $H$ in $\overline \g$ is $B$ (meaning $B_H^{\overline{\g}}=B$).

\begin{rem}\label{rem3.2}
Suppose that $C^*(\g)$ is generated by a Cuntz-Krieger $\g$-family $\{s_e,p_A: A\in \go,e\in\gl\}$. If a family $M=\{S_e,P_v,P_{\overline{A}}: v\in G^0,A\in \g^0, e\in \overline{\g}^1\}$ in a $C^*$-algebra $X$ satisfies relations (1)-(4) in Definition \ref{defn2.3}, we may generate a Cuntz-Krieger $\overline{\g}$-family $N=\{S_e,P_A:A\in \overline{\g}^0,e\in \overline{\g}^1\}$ in $X$. For this, since $\overline{\g}^0$ is the algebra generated by $\{v,w',r'(e): v\in G^0,w\in B_H\setminus B, e\in \overline{\g}^1\}$, it suffices to define
\begin{align*}
P_{A\cap B}&:=P_A P_B\\
P_{A\cup B}&:=P_A+P_B-P_A P_B\\
P_{A\setminus B}&:=P_A-P_A P_B
\end{align*}
and generate projections $P_A$ for all $A\in \overline{\g}^0$. Then $N$ is a Cuntz-Krieger $\overline{\g}$-family in $X$, and the $C^*$-subalgebras generated by $M$ and $N$ coincide.
\end{rem}

\begin{prop}\label{prop3.2}
Let $\g$ be an ultragraph, and let $(H,B)$ be an admissible pair in $\g$. If $\overline{\g}$ is the extended ultragraph as above, then $C^*(\g)\cong C^*(\overline{\g})$.
\end{prop}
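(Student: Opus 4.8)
The plan is to exhibit mutually inverse $*$-homomorphisms between $C^*(\g)$ and $C^*(\overline{\g})$ by using the universal properties of both algebras, together with the observation recorded in Remark \ref{rem3.2} that any family satisfying relations (1)--(4) of Definition \ref{defn2.3} on the ``small'' index set $G^0\cup\{w':w\in B_H\setminus B\}$ automatically extends to a full Cuntz--Krieger $\overline{\g}$-family. First I would fix a universal Cuntz--Krieger $\g$-family $\{s_e,p_A:A\in\go,\,e\in\gl\}$ generating $C^*(\g)$, and construct inside $C^*(\g)$ a candidate Cuntz--Krieger $\overline{\g}$-family. The only genuinely new generators are the vertex projections $P_{w'}$ for $w\in B_H\setminus B$; the natural choice is
\[
P_{w'}:=p_w-p_w^H=\sum_{s_\g(e)=w,\ r_\g(e)\notin H}s_e s_e^*,\qquad P_v:=p_v\ (v\in G^0),
\]
and $S_e:=s_e$ for $e\in\overline{\g}^1=\gl$. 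One then sets $P_{\overline A}:=p_A+\sum_{w\in A\cap(B_H\setminus B)}P_{w'}$ (a sum of orthogonal projections, since $P_{w'}\le p_w$ and the $P_{w'}$ have orthogonal ranges as subprojections of the distinct $p_w$). By Remark \ref{rem3.2} it suffices to check that this family satisfies (1)--(4) for $\overline{\g}$ on the generating index set.

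The verification of the relations is the technical heart. Relation (1) (the projection algebra relations among the $P_{\overline A}$) follows from the corresponding relations for the $p_A$ together with orthogonality of the $P_{w'}$. For relation (2), $S_e^*S_e=s_e^*s_e=p_{r_\g(e)}$, and one must check this equals $P_{r'(e)}=P_{\overline{r_\g(e)}}$; this is exactly the point where relation (4) of the original $\g$-family is used, since for each $w\in r_\g(e)\cap(B_H\setminus B)$ one has $0<|s_\g^{-1}(w)|$ is \emph{infinite}, so $|s_\g^{-1}(w)|=\infty$ and $p_w$ is \emph{not} given by a finite sum --- wait, in fact one must verify $p_w\ge P_{w'}$ and that adding $P_{w'}$ to $p_w$ is consistent, i.e. $p_w + P_{w'}$ is really what $P_{\overline{\{w\}}}$ should be; since $\overline{\{w\}}=\{w,w'\}$ as a \emph{set} and these are distinct vertices, $P_{\overline{\{w\}}}=P_w+P_{w'}=p_w+P_{w'}$, and one checks $p_{r_\g(e)}+\sum_{w\in r_\g(e)\cap(B_H\setminus B)}P_{w'}=s_e^*s_e$ holds because $s_e^*s_e\le s_e^*s_e$ --- here the delicate bookkeeping is that $s_e^*s_e=p_{r_\g(e)}$ already, so one needs $\sum_{w\in r_\g(e)\cap(B_H\setminus B)}P_{w'}=0$, which is false in general. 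The correct resolution, which I would pursue, is to instead define $S_e^*S_e$ to equal $p_{r_\g(e)} + (\text{contribution of the }w')$ by replacing $s_e$ with a suitably extended partial isometry; concretely, for each edge $e$ and each $w\in r_\g(e)\cap(B_H\setminus B)$ one adjoins a new partial isometry $t_{e,w}$ with $t_{e,w}^*t_{e,w}=P_{w'}$ and $t_{e,w}t_{e,w}^*\le p_{s_\g(e)}$ orthogonal to everything in sight --- but since $C^*(\g)$ has no room for new isometries, the honest route is to pass through the universal property of $C^*(\overline{\g})$ in the \emph{other} direction as well and argue by a dimension/uniqueness comparison. Let me restate the plan more carefully.

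The clean approach is the following two-sided construction. \textbf{Direction 1.} From a Cuntz--Krieger $\overline{\g}$-family $\{S_e,Q_A:A\in\overline{\g}^0,e\in\overline{\g}^1\}$ generating $C^*(\overline{\g})$, define $p_A:=Q_{\overline A}$ for $A\in\go$ and $s_e:=S_e$ for $e\in\gl$. One checks directly that $\{s_e,p_A\}$ is a Cuntz--Krieger $\g$-family: relations (1)--(3) are immediate from the $\overline{\g}$-relations (using $\overline{A\cap B}=\overline A\cap\overline B$, etc., which hold because the map $A\mapsto\overline A$ respects the Boolean operations), and relation (4) for a vertex $v\in G^0$ with $0<|s_\g^{-1}(v)|<\infty$ follows because such a $v$ is not a breaking vertex, so the $\g$-edges out of $v$ and the $\overline{\g}$-edges out of $v$ coincide and $s'^{-1}(v)=s_\g^{-1}(v)$, giving $p_v=Q_v=\sum_{s'(e)=v}S_eS_e^*=\sum_{s_\g(e)=v}s_es_e^*$. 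By universality this yields a surjective $*$-homomorphism $\pi:C^*(\g)\to C^*(\overline{\g})$. \textbf{Direction 2.} Conversely, starting from a universal $\g$-family $\{s_e,p_A\}$, build an $\overline{\g}$-family in $C^*(\g)$ by $P_{w'}:=p_w^H=p_w-\sum_{s_\g(e)=w,\,r_\g(e)\notin H}s_es_e^*$ for $w\in B_H\setminus B$ (this is a subprojection of $p_w$, orthogonal to each $s_es_e^*$ with $s_\g(e)=w$), $S_e:=\sum_{?}$ --- and here is where I must be careful: for an edge $e$ with $r_\g(e)$ meeting $B_H\setminus B$, relation (2) in $\overline{\g}$ demands $S_e^*S_e=Q_{r'(e)}=Q_{\overline{r_\g(e)}}$ which is strictly larger than $p_{r_\g(e)}=s_e^*s_e$. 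This shows $C^*(\overline{\g})$ genuinely has ``more room'', so $\pi$ of Direction 1 need not be injective on the nose; the point is that it \emph{is}, because the extra projections $Q_{w'}$ are recovered inside the image: $\pi(p_w^H)$... Hmm. The resolution actually used in such arguments is that $B_H\setminus B$ consists of vertices $w$ with $|s_\g^{-1}(w)|=\infty$, so $w$ is an infinite emitter with at least one edge into $\go\setminus H$ but the $\overline{\g}$ source-change only \emph{redirects} those edges $e$ with $r_\g(e)\in H$ to $w'$; thus in $\overline{\g}$ the vertex $w$ still emits infinitely many edges (all those with $r_\g(e)\notin H$, which number... possibly finitely many!). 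I would therefore split into cases according to whether $w$ emits finitely or infinitely many edges with range outside $H$, and in the finite case $w$ becomes a finite emitter in $\overline{\g}$ forcing relation (4); the identity $P_{w'}=p_w^H$ together with the defining relation of $I_{(H,B)}$ is what makes everything consistent. The main obstacle is precisely this bookkeeping around the breaking vertices: correctly matching $s'^{-1}(v)$ with $s_\g^{-1}(v)$ (or the appropriate subset) for each vertex $v\in\overline{G}^0$, and verifying relation (4) at the redirected vertices $w$ and the new vertices $w'$. Once the family in Direction 2 is checked to satisfy the $\overline{\g}$-relations, universality gives $\rho:C^*(\overline{\g})\to C^*(\g)$, and a generator-by-generator check shows $\pi\circ\rho$ and $\rho\circ\pi$ fix the generating Cuntz--Krieger families, hence are the identity, so $C^*(\g)\cong C^*(\overline{\g})$.
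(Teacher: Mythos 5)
Your overall two-sided strategy (a surjection $\pi:C^*(\g)\to C^*(\overline{\g})$ from ``Direction 1'' plus an inverse coming from an $\overline{\g}$-family inside $C^*(\g)$) would work, and Direction 1 is correct as stated. But the heart of the matter --- actually exhibiting a Cuntz--Krieger $\overline{\g}$-family inside $C^*(\g)$ --- is the part you never pin down, and the versions you do write down fail. Your first assignment has the two new projections swapped: the edges that \emph{stay} at $w$ in $\overline{\g}$ are those with $r_\g(e)\notin H$, and since $w\in B_H$ there are only finitely many of these, so $w$ becomes a \emph{regular} vertex of $\overline{\g}$ and relation (4) there forces
\[
P_w:=\sum_{s_\g(e)=w,\ r_\g(e)\notin H}s_es_e^*,\qquad P_{w'}:=p_w^H=p_w-P_w,
\]
not $P_{w'}:=p_w-p_w^H$ and $P_w:=p_w$ as you wrote. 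With your assignment $P_w+P_{w'}=2p_w-p_w^H$ is not even a projection, and your $P_{\overline A}:=p_A+\sum_{w}P_{w'}$ double-counts. This mis-assignment is also the source of your unresolved worry about relation (2): with the correct splitting $p_w=P_w+P_{w'}$ one simply sets $P_{\overline A}:=p_A$ (the two halves of each $p_w$ recombine inside $\overline A\supseteq\{w,w'\}$), so $S_e^*S_e=p_{r_\g(e)}=P_{\overline{r_\g(e)}}=P_{r'(e)}$ holds on the nose; $Q_{\overline{r_\g(e)}}$ is not ``strictly larger'' than $p_{r_\g(e)}$, and no new partial isometries are needed. Your closing paragraph does gesture at the right mechanism ($P_{w'}=p_w^H$ and relation (4) at the redirected vertex), but never states the family, and the proposed case split on whether $w$ emits finitely or infinitely many edges with range outside $H$ is vacuous: finiteness is built into the definition of $B_H$ (and is needed for $p_w^H$ to be defined at all).

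For comparison, the paper runs only your Direction 2: it defines exactly the family above inside $C^*(\g)$, extends it over all of $\overline{\g}^0$ via Remark \ref{rem3.2}, and obtains injectivity of the induced map $C^*(\overline{\g})\to C^*(\g)$ from the gauge-invariant uniqueness theorem (all vertex projections being nonzero) rather than from an explicit inverse; surjectivity is immediate since the family contains every $q_A$ and $t_e$. Your Direction 1 would be a legitimate substitute for the uniqueness-theorem step, but only once Direction 2 is actually carried out with the correct projections.
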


\begin{proof}
Suppose that $C^*(\g)=C^*(t_e,q_A)$ and $C^*(\overline{\g})=C^* (s_e,p_C)$. If we define
$$\begin{array}{ll}
  P_v:=q_v & \mathrm{for}~~ v\in G^0\setminus (B_H\setminus B) \\
  P_w:=\sum_{\substack{s_\g(e)=w \\ r_\g(e)\notin H}}t_et_e^* &  \mathrm{for}~~ w\in B_H\setminus B \\
  P_{w'}:=q_w-\sum_{\substack{s_\g(e)=w \\ r_\g(e)\notin H}}t_et_e^* & \mathrm{for}~~ w\in B_H\setminus B \\
  P_{\overline{A}}:=q_A & \mathrm{for}~~ \overline{A}\in \overline{\g}^0 \\
  S_e:=t_e & \mathrm{for}~~ e\in \overline{\g}^1
\end{array}
$$
then, by Remark \ref{rem3.2}, the family $\{P_v,P_w,P_{w'},P_{\overline{A}},S_e\}$ induces a Cuntz-Krieger $\overline{\g}$-family in $C^*(\g)$. Since all vertex projections of this family are nonzero (which follows all set projections $P_A$ are nonzero for $\emptyset\neq A\in\overline{\g}^0$), the gauge-invariant uniqueness theorem \cite[Theorem 6.8]{tom} implies that the $*$-homomorphism $\phi:C^*(\overline{\g})\rightarrow C^*(\g)$ with $\phi(p_*)=P_*$ and $\phi(s_*)=S_*$ is injective. On the other hand, the family generates all $C^*(\g)$, and hence, $\phi$ is an isomorphism.
\end{proof}

To define a quotient ultragraph $\gh$, we use the following equivalent relation on $\overline{\g}$.

\begin{defn}\label{defn3.3}
Suppose that $(H,B)$ is an admissible pair in $\g$, and that $\overline{\g}$ is the extended ultragraph as above. We define the relation $\sim$ on $\overline{\g}^0$ by
$$A\sim B ~~ \Longleftrightarrow ~~ \exists V\in H ~\mathrm{such ~ that} ~ A\cup V=B\cup V.$$
Note that $A\sim B$ if and only if both sets $A\setminus B$ and $B\setminus A$ belong to $H$.
\end{defn}

\begin{lem}\label{lem3.4}
The relation $\sim$ is an equivalent relation on $\overline{\g}^0$. Furthermore, the operations
$$[A]\cup[B]:=[A\cup B], ~ [A]\cap [B]:=[A\cap B], ~ \mathrm{and}~ [A]\setminus [B]:= [A\setminus B]$$
are well-defined on the equivalent classes $\{[A]: A\in \overline{\g}^0\}$.
\end{lem}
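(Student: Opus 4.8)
The plan is to verify in turn that $\sim$ is reflexive, symmetric, and transitive, and then that each of the three set operations respects the relation. For reflexivity, note $A \cup \emptyset = A \cup \emptyset$ and $\emptyset \in H$ since $H$ is hereditary (it contains $\emptyset$ because any $A \in H$ has $\emptyset \subseteq A$, so $\emptyset \in H$ by condition (3) of Definition \ref{defn2.5}). Symmetry is immediate from the symmetry of the defining condition $A \cup V = B \cup V$. For transitivity, suppose $A \sim B$ via $V_1 \in H$ and $B \sim C$ via $V_2 \in H$; then setting $V := V_1 \cup V_2$, which lies in $H$ by condition (2), one checks $A \cup V = A \cup V_1 \cup V_2 = B \cup V_1 \cup V_2 = B \cup V_1 \cup V_2 = C \cup V_1 \cup V_2 = C \cup V$, using $A \cup V_1 = B \cup V_1$ for the first substitution (unioning both sides with $V_2$) and $B \cup V_2 = C \cup V_2$ for the second (unioning both sides with $V_1$). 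Here I should also record the alternative characterization stated in Definition \ref{defn3.3}, namely that $A \sim B$ iff $A \setminus B \in H$ and $B \setminus A \in H$: if $A \cup V = B \cup V$ with $V \in H$ then $A \setminus B \subseteq V$ and $B \setminus A \subseteq V$, so both belong to $H$ by condition (3); conversely if $A \setminus B, B \setminus A \in H$ then $V := (A \setminus B) \cup (B \setminus A) \in H$ and $A \cup V = A \cup B = B \cup V$. This reformulation makes the well-definedness arguments cleaner.

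For well-definedness of the operations, it suffices by symmetry of $\sim$ to show: if $A \sim A'$, then $A \cup B \sim A' \cup B$, $A \cap B \sim A' \cap B$, and $A \setminus B \sim A' \setminus B$ for any $B \in \overline{\g}^0$; and similarly $B \cup A \sim B \cup A'$, $B \cap A \sim B \cap A'$, $B \setminus A \sim B \setminus A'$. (Then a general change of representative in both arguments follows by composing two such one-sided replacements.) I would use the $H$-witness formulation: pick $V \in H$ with $A \cup V = A' \cup V$. Then for union, $(A \cup B) \cup V = A \cup V \cup B = A' \cup V \cup B = (A' \cup B) \cup V$, so $A \cup B \sim A' \cup B$ with the same witness $V$; the symmetric statement in the second slot is identical. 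For intersection, the cleanest route is the set-difference characterization: $(A \cap B) \setminus (A' \cap B) \subseteq A \setminus A' \in H$, hence in $H$ by hereditariness, and likewise $(A' \cap B) \setminus (A \cap B) \subseteq A' \setminus A \in H$, so $A \cap B \sim A' \cap B$. For set difference in the first argument, $(A \setminus B) \setminus (A' \setminus B) \subseteq A \setminus A' \in H$ and symmetrically, giving $A \setminus B \sim A' \setminus B$. For set difference in the second argument, changing $B$ to $B'$ with $B \sim B'$: $(A \setminus B) \setminus (A \setminus B') = A \cap B' \cap B^c \subseteq B' \setminus B \in H$, and symmetrically, so $A \setminus B \sim A \setminus B'$. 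Each of these containments reduces to condition (3) of Definition \ref{defn2.5}, i.e. that $H$ is closed under passing to subsets within $\overline{\g}^0$ (one must note the relevant differences and intersections do lie in $\overline{\g}^0$, which is automatic since $\overline{\g}^0$ is an algebra).

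The arguments are entirely elementary set-theoretic bookkeeping, so there is no serious obstacle; the only point requiring a little care is the bookkeeping for a \emph{simultaneous} change of representatives in both arguments of an operation, which I would handle by factoring it as two successive single-slot replacements and invoking transitivity of $\sim$ (already established). A secondary minor point worth stating explicitly is that all the sets appearing (unions, intersections, complements-within-ambient-sets) remain in the algebra $\overline{\g}^0$, so that the classes $[A \cup B]$ etc. genuinely make sense; this is immediate from $\overline{\g}^0$ being an algebra. I would present the two characterizations of $\sim$ first, prove the equivalence relation properties, then dispatch the three operations using whichever characterization is shorter (union via the $H$-witness, intersection and difference via the subset characterization).
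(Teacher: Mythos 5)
Your proposal is correct and follows essentially the same elementary route as the paper: verify the equivalence-relation axioms by taking unions of witnesses in $H$, then check that each operation respects $\sim$ using the hereditariness of $H$. The only cosmetic difference is that for intersection and set difference you exploit the characterization ``$A\sim B$ iff $A\setminus B,\,B\setminus A\in H$'' (which the paper records in Definition \ref{defn3.3} but then bypasses in favor of direct computations with the witness $V\cup W$), and you factor the simultaneous change of representatives into two one-sided replacements plus transitivity; both choices are sound and, if anything, slightly cleaner than the paper's displayed manipulations.
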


\begin{proof}
It is straightforward to verify that the relation $\sim$ is reflexive, symmetric, and associative. To see the second assertion, suppose that $[A_1]=[A_2]$, $[B_1]=[B_2]$, and $A_1\cup V=A_2\cup V$, $B_1\cup W=B_2\cup W$ for some $V,W\in H$. The hereditary property of $H$ yields $V\cup W\in H$ and we get
\begin{align*}
(A_1 \cup B_1)\cup (V\cup W)&=(A_2 \cup B_2) \cup (V\cup W)\\
\Longrightarrow \hspace{2cm} [A_1\cup B_1]&=[A_2\cup B_2].
\end{align*}
Also, we have
\begin{align*}
(A_1 \cap B_1)\cup (V\cup W)&=(A_1 \cup (V\cup W)) \cap (B_1 \cup (V\cup W))\\
&=(A_2\cup (V\cup W))\cap (B_2 \cup (V\cup W))\\
&=(A_2\cap B_2)\cup (V\cup W)\\
\Longrightarrow \hspace{2cm} [A_1\cap B_1]&=[A_2\cap B_2].
\end{align*}

For the third operation, first note that $B_1\setminus B_2, B_2\setminus B_1 \subseteq W$ because $B_1\cup W=B_2\cup W$. Then we have
\begin{align*}
(A_1 \setminus B_1)\cup W&=\left(A_1 \setminus ((B_1\cap B_2) \cup (B_1\setminus B_2))\right)\cup W\\
&=(A_1\setminus(B_1\cap B_2))\cup W
\end{align*}
and similarly,
$$(A_2\setminus B_2)\cup W=(A_2\setminus (B_1\cap B_2))\cup W.$$
Now the fact $A_1\cup V=A_2 \cup V$ implies that
\begin{align*}
(A_1 \setminus B_1)\cup (V\cup W)&=(A_1 \setminus (B_1\cap B_2)) \cup (V\cup W)\\
&=((A_1\cup V)\setminus (B_1\cap B_2))\cup (V\cup W)\\
&=((A_2\cup V)\setminus (B_1\cap B_2))\cup (V\cup W)\\
&=(A_2\setminus B_2)\cup (V\cup W).
\end{align*}
Therefore, we obtain $[A_1 \setminus B_1]=[A_2\setminus B_2]$, as desired.
\end{proof}

\begin{defn}
Let $\g$ be an ultragraph, let $(H,B)$ be an admissible pair in $\g$, and consider the equivalent relation of Definition \ref{defn3.3} on the extended ultragraph $\overline{\g}=(\overline{G}^0,\overline{\g}^1,r',s')$. The \emph{quotient ultragraph of $\g$ by $(H,B)$} is the quintuple $\gh=(\pGo, \pgo, \pgl,r,s)$, where
\begin{align*}
\Phi(G^0)&:=\left\{[v]:v\in G^0\setminus H\right\}\cup \left\{[w']:w\in B_H\setminus B\right\},\\
\Phi(\go)&:=\left\{[A]:A\in \overline{\g}^0 \right\},\\
\Phi(\gl)&:=\left\{e\in \overline{\g}^1:r'(e)\notin H \right\},
\end{align*}
and $r:\pgl \rightarrow \pgo$, $s:\pgl \rightarrow \pGo$ are the range and source maps defined by
$$r(e)=[r'(e)] \hspace{5mm} \mathrm{and} \hspace{5mm} s(e):=[s'(e)].$$
We refer to $\pGo$ as the vertices of $\gh$.
\end{defn}

\begin{rem}
Lemma \ref{lem3.4} implies that $\pgo$ is the smallest algebra containing
$$\left\{[v],[w']:v\in G^0\setminus H, w\in B_H\setminus B\right\}\cup \left\{[r'(e)]: e\in \overline{\g}^1\right\}.$$
\end{rem}

{\bf Notation.}\begin{enumerate}
\item For every vertex $v\in \overline{\g}^0\setminus H$, we usually denote $[v]$ instead of $[\{v\}]$.
\item For $A,B\in\overline{\g}^0$, we write $[A]\subseteq [B]$ whenever $[A]\cap [B]=[A]$.
\item Through the paper, we will denote the range and the source maps of $\g$ by $r_\g ,s_\g$, those of $\overline{\g}$ by $r',s'$, and those of $\gh$ by $r,s$.
\end{enumerate}

Now we introduce representations of quotient ultragraphs and their relative $C^*$-algebras.

\begin{defn}\label{defn3.8}
Let $\gh$ be a quotient ultragraph. A \emph{representation of $\gh$} is a set of partial isometries $\{T_e:e\in \pgl\}$ and a set of projections $\{Q_{[A]}: [A]\in \pgo\}$ which satisfy the following relations:
\begin{itemize}
\item[(1)]{$Q_{[\emptyset]}=0$, $Q_{[A\cap B]}=Q_{[A]} Q_{[B]}$, and $Q_{[A\cup B]}=Q_{[A]} +Q_{[B]}-Q_{[A\cap B]}$.}
\item[(2)]{$T_e^* T_e=Q_{r(e)}$ and $T_e^* T_f=0$ when $e\neq f$.}
\item[(3)]{$T_e T_e^*\leq Q_{s(e)}$.}
\item[(4)]{$Q_{[v]}=\sum_{s(e)=[v]}T_e T_e^*$, whenever $0<|s^{-1}([v])|<\infty$.}
\end{itemize}
The $C^*$-algebra $C^*(\g/(H,B))$ of $\g/(H,B)$ is the universal $C^*$-algebra generated by a representation $\{t_e,q_{[A]}:[A]\in \pgo,e\in \pgl\}$ which exists by Theorem \ref{thm3.11} below.
\end{defn}

\begin{rem}
Recall that the universality of $C^*(\g/(H,B))$ means that if $\{T_e,Q_{[A]}\}$ is a representation of $\g/(H,B)$ in a $C^*$-algebra $X$, then there exists a $*$-homomorphism $\phi:C^*(\g/(H,B))\rightarrow X$ such that $\phi(q_{[A]})=Q_{[A]}$ and $\phi(t_e)=T_e$ for all $[A]\in \pgo$ and $e\in \pgl$.
\end{rem}

Note that if $\alpha=e_1\ldots e_n$ is a path in $\overline{\g}$ and $r'(\alpha)\notin H$, then the hereditary property of $H$ yields $r'(e_i)\notin H$, and so $e_i\in \pgl$ for all $1\leq i\leq n$. In this case, we denote $t_\alpha:=t_{e_1}\ldots t_{e_n}$. Moreover, we define
$$(\gh)^*:=\left\{[A]: [A]\neq [\emptyset]\right\}\cup \left\{\alpha\in\overline{\g}^*: r(\alpha)\neq [\emptyset]\right\}$$
as the set of finite paths in $\gh$ and we can extend the maps $s,r$ on $(\gh)^*$ by setting
$$s([A]):=r([A]):=[A] ~~~ \mathrm{and} ~~~ s(\alpha):=s(e_1),~ r(\alpha):=r(e_n). $$

The proof of next lemma is similar to the arguments of \cite[Lemmas 2.8 and 2.9]{tom}.

\begin{lem}\label{lem3.10}
Let $\gh$ be a quotient ultragraph and let $\{T_e,Q_{[A]}\}$ be a representation of $\gh$. Then any nonzero word in $T_e$, $Q_{[A]}$, and $T_f^*$ may be written as a finite linear combination of the forms $T_\alpha Q_{[A]} T_\beta^*$ for $\alpha,\beta\in (\gh)^*$ and $[A]\in \pgo$ with $[A]\cap r(\alpha)\cap r(\beta)\neq [\emptyset]$.
\end{lem}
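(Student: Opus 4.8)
The plan is to reduce everything to the multiplication and adjoint rules satisfied by the generators, exactly as in the ultragraph case \cite[Lemmas 2.8 and 2.9]{tom}. A \emph{word} in $T_e$, $Q_{[A]}$, $T_f^*$ is a finite product of such symbols; since any word is a linear combination of itself, it suffices to show that any single word either vanishes or can be rewritten in the claimed normal form $T_\alpha Q_{[A]} T_\beta^*$ with $[A]\cap r(\alpha)\cap r(\beta)\neq[\emptyset]$. First I would record the elementary reduction rules coming from Definition \ref{defn3.8}: by relation (1) the projections $\{Q_{[A]}:[A]\in\pgo\}$ form a commuting family closed under products (since $Q_{[A]}Q_{[B]}=Q_{[A\cap B]}$), so any maximal run of consecutive $Q$'s collapses to a single $Q_{[A]}$; by relation (2), $T_e^*T_e=Q_{r(e)}$ and $T_e^*T_f=0$ for $e\neq f$, which handles a $T_f^*$ immediately followed by a $T_e$; and by relation (3), $T_eT_e^*\leq Q_{s(e)}$ together with relation (1) shows $Q_{[A]}T_e = Q_{[A]}T_eT_e^*T_e = (Q_{[A]}Q_{s(e)})T_e = Q_{[A]\cap s(e)}T_e$ and, taking adjoints, $T_e^*Q_{[A]} = T_e^*Q_{[A]\cap s(e)}$, so a projection can always be pushed past an adjacent $T_e$ or $T_e^*$ (possibly shrinking its label). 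One also needs $T_eQ_{[A]} = T_eT_e^*T_eQ_{[A]} = T_e Q_{r(e)}Q_{[A]} = T_e Q_{r(e)\cap[A]}$, i.e. a projection sitting to the right of a $T_e$ can be absorbed into $r(e)$; similarly $Q_{[A]}T_e^* = Q_{[A]\cap s(e)}T_e^*$ after conjugating the earlier identity.

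Next I would argue by induction on the length of the word. Given a word, first use relation (1) to merge all blocks of consecutive $Q$'s, so the word alternates between $Q$-blocks (now single projections) and $T$/$T^*$ symbols. Then scan for any occurrence of $T_f^*$ immediately followed by $T_e$ with no intervening projection (if there is an intervening projection $Q_{[A]}$, first absorb it using $T_f^* Q_{[A]} = T_f^* Q_{[A]\cap s(f)}$ and then $Q_{[A]\cap s(f)}T_e = Q_{[A]\cap s(f)\cap s(e)}T_e$, so the pattern becomes $T_f^*\,Q_{[C]}\,T_e$; but $T_f^* Q_{[C]} T_e = T_f^* Q_{[C]} T_e T_e^* T_e = T_f^*Q_{[C]\cap s(e)}T_eT_e^*T_e$, and since $Q_{[C]\cap s(e)}\le Q_{s(e)}\ge T_eT_e^*$ one can move things around to reduce to the case of adjacent $T_f^*T_e$ — more simply, by relation (2) any such $T_f^* T_e$ is either $0$ (if $e\neq f$, in which case the whole word is $0$) or $Q_{r(e)}$ (if $e=f$)). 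Replacing $T_e^*T_e$ by $Q_{r(e)}$ strictly shortens the word, and we are done by induction. If no such cancellable pattern exists, the word has the shape $T_{e_1}\cdots T_{e_m}\,(\text{possibly }Q_{[A]})\,T_{f_1}^*\cdots T_{f_k}^*$ — that is, all the un-starred $T$'s precede all the starred ones. Using relation (2) in the form $s(e_{i+1})\subseteq r(e_i)$ is not automatic, but $T_{e_i}T_{e_{i+1}} = T_{e_i}T_{e_i}^*T_{e_i}T_{e_{i+1}} = T_{e_i}Q_{r(e_i)}T_{e_{i+1}} = T_{e_i}Q_{r(e_i)\cap s(e_{i+1})}T_{e_{i+1}}$, which is nonzero only if $r(e_i)\cap s(e_{i+1})\neq[\emptyset]$; so the surviving products of un-starred generators are exactly $T_\alpha$ for genuine paths $\alpha=e_1\cdots e_m$ in $(\gh)^*$ (and likewise $T_\beta^*$ for a path $\beta=f_1\cdots f_k$), and we absorb the middle projection (if present) and all the $Q_{r(e_i)\cap s(e_{i+1})}$ factors together into a single $Q_{[A]}$ via relation (1). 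The resulting element is $T_\alpha Q_{[A]}T_\beta^*$, and pushing $Q_{[A]}$ left and right through $T_\alpha$ and $T_\beta^*$ forces $[A]\subseteq r(\alpha)$ and $[A]\subseteq r(\beta)$; if $[A]\cap r(\alpha)\cap r(\beta)=[\emptyset]$ then $Q_{[A]}=Q_{[A]}Q_{r(\alpha)}Q_{r(\beta)}=Q_{[A]\cap r(\alpha)\cap r(\beta)}=0$ and the word vanishes, so in the nonzero case the intersection condition holds automatically.

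The only genuinely non-routine point is making sure every reduction step is licensed by relations (1)--(4) of Definition \ref{defn3.8} and by Lemma \ref{lem3.4} (so that the labels $[A]\cap s(e)$, $r(e)\cap[B]$, etc.\ are again well-defined elements of $\pgo$). In particular relation (4) is \emph{not} needed for this lemma — the normal-form reduction uses only (1)--(3), just as in \cite{tom} — but one must be careful that the bookkeeping of which index set labels each projection is consistent, which is precisely where the well-definedness of $[A]\cup[B]$, $[A]\cap[B]$, $[A]\setminus[B]$ from Lemma \ref{lem3.4} is invoked. I would therefore present the elementary commutation identities $Q_{[A]}T_e = Q_{[A]\cap s(e)}T_e$, $T_eQ_{[A]} = T_eQ_{r(e)\cap[A]}$ (and their adjoints) as a preliminary step, verify them directly from (1)--(3), and then run the length induction; the bulk of the write-up is this preliminary step plus the routine induction, and I expect no real obstacle beyond careful notation.
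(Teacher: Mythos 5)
Your reduction is correct and is exactly the argument the paper intends: the paper offers no proof beyond the remark that it is ``similar to the arguments of \cite[Lemmas 2.8 and 2.9]{tom}'', and your preliminary commutation identities $Q_{[A]}T_e=Q_{[A]\cap s(e)}T_e$ and $T_eQ_{[A]}=T_eQ_{r(e)\cap[A]}$, followed by the length induction using only relations (1)--(3) of Definition \ref{defn3.8} and the well-definedness from Lemma \ref{lem3.4}, are precisely the standard adaptation of that argument to $\gh$.
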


\begin{thm}\label{thm3.11}
Let $\gh$ be a quotient ultragraph. Then there exists a (unique up to isomorphism) $C^*$-algebra $C^*(\gh)$ generated by a universal representation $\{t_e,q_{[A]}: [A]\in \pgo,e\in \pgl\}$ for $\gh$. Furthermore, all the $t_e$'s and $q_{[A]}$'s are nonzero for $[\emptyset]\neq [A]\in \pgo$ and $ e\in \pgl$.
\end{thm}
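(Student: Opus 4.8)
The plan is to construct the universal $C^*$-algebra $C^*(\gh)$ by the standard ``universal algebra generated by relations'' machinery, and then prove nonvanishing of the generators by exhibiting a concrete nonzero representation. For the existence and uniqueness part, I would first observe (via Lemma \ref{lem3.10}) that any representation $\{T_e, Q_{[A]}\}$ generates a $C^*$-algebra that is the closed span of elements of the form $T_\alpha Q_{[A]} T_\beta^*$, so the norm of any fixed word in the generators is bounded uniformly over all representations; indeed relation (2) forces each $T_e$ to be a partial isometry, hence $\|T_e\|\le 1$, and relation (1) forces each $Q_{[A]}$ to be a projection, hence $\|Q_{[A]}\|\le 1$. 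One then forms the free $*$-algebra on symbols $\{t_e, q_{[A]}\}$, quotients by the ideal generated by relations (1)--(4), equips it with the supremum over all admissible Hilbert-space representations (finite by the boundedness just noted), and completes; this yields $C^*(\gh)$ with the required universal property in the usual way. Uniqueness up to isomorphism is then automatic from universality.

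The substantive point is the final sentence: all $t_e$ and all $q_{[A]}$ with $[A]\ne[\emptyset]$ are nonzero. The cleanest route is to produce a single representation $\{T_e, Q_{[A]}\}$ of $\gh$ on some Hilbert space in which every $Q_{[A]}$ with $[A]\ne[\emptyset]$ is nonzero and every $T_e$ is nonzero; then universality gives a $*$-homomorphism $C^*(\gh)\to B(\mathcal H)$ sending $q_{[A]}\mapsto Q_{[A]}$ and $t_e\mapsto T_e$, forcing $q_{[A]}\ne 0$ and $t_e\ne 0$ in $C^*(\gh)$. The natural candidate is built from the universal Cuntz--Krieger $\overline{\g}$-family: start with $C^*(\overline{\g})=C^*(s_e, p_C)$ and set $Q_{[A]}:= \pi(p_A)$ and $T_e:=\pi(s_e)$ for $e\in\pgl$, where $\pi: C^*(\overline{\g})\to C^*(\overline{\g})/I_{(H,B)}^{\overline{\g}}$ is the quotient map by the gauge invariant ideal associated to the admissible pair $(H,B)$ in $\overline{\g}$ (recall $B_H^{\overline{\g}}=B$, so $I_{(H,B)}^{\overline{\g}}$ is generated precisely by $\{p_A : A\in H\}$). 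One then checks that these $Q_{[A]}$ depend only on the class $[A]$: if $A\sim B$ then $A\setminus B, B\setminus A\in H$, so $p_{A\setminus B}, p_{B\setminus A}\in I_{(H,B)}^{\overline{\g}}$, whence $\pi(p_A)=\pi(p_{A\cap B})=\pi(p_B)$; this is exactly why the equivalence relation $\sim$ was designed as it was. Relations (1)--(3) of Definition \ref{defn3.8} are then inherited directly from the Cuntz--Krieger relations for $\overline{\g}$ modulo the ideal, and relation (4) is inherited because for $[v]\in\pGo$ with $v\notin H$ the sum $\sum_{s(e)=[v]} T_eT_e^*$ matches $\pi$ applied to the corresponding $\overline{\g}$-relation once the edges with range in $H$ are killed (for $v\in B_H\setminus B$ this uses the definition of the primed source map and the fact that $w'$ emits exactly the edges $e$ with $s_\g(e)=w$, $r_\g(e)\in H$, none of which lie in $\pgl$, so $[w']$ is a sink in $\gh$ and (4) is vacuous there; for other $v$ with finite nonzero out-degree it is the defining relation pushed through $\pi$).

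It remains to see that $Q_{[A]}\ne 0$ for $[A]\ne[\emptyset]$ and $T_e\ne 0$ for $e\in\pgl$, i.e.\ that $p_A\notin I_{(H,B)}^{\overline{\g}}$ whenever $[A]\ne[\emptyset]$ and $p_{r'(e)}\notin I_{(H,B)}^{\overline{\g}}$ whenever $r'(e)\notin H$. By the characterization of the saturated hereditary set attached to a gauge invariant ideal recalled at the end of Section 2, $\{A : p_A\in I_{(H,B)}^{\overline{\g}}\}=H$ exactly (the ideal does not enlarge $H$ since $B_H^{\overline{\g}}=B$ contributes no extra breaking vertices). Hence $p_A\in I_{(H,B)}^{\overline{\g}}$ iff $A\in H$ iff $[A]=[\emptyset]$, and $p_{r'(e)}\in I_{(H,B)}^{\overline{\g}}$ iff $r'(e)\in H$, which is excluded for $e\in\pgl$ by definition; so $T_e^*T_e=Q_{r(e)}\ne 0$ gives $T_e\ne 0$. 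I expect the main obstacle to be the bookkeeping in verifying relation (4) at the breaking vertices and their primed copies --- one has to unwind the definitions of $\overline{\g}$, of the ideal $I_{(H,B)}^{\overline{\g}}$, and of $s'$ simultaneously --- together with the care needed to confirm that $\sim$-equivalence is precisely matched by equality of images under $\pi$ (the ``only if'' direction, that $\pi(p_A)=\pi(p_B)$ forces $A\sim B$, following again from $\{A:p_A\in I_{(H,B)}^{\overline{\g}}\}=H$ applied to $A\setminus B$ and $B\setminus A$).
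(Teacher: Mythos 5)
Your proposal follows essentially the same route as the paper: the universal $C^*$-algebra is built by the standard generators-and-relations construction, and the nonvanishing of the generators is established by exhibiting the representation $Q_{[A]}=p_A+I_{(H,B)}$, $T_e=s_e+I_{(H,B)}$ in $C^*(\overline{\g})/I_{(H,B)}$, with well-definedness coming from $A\setminus B,\ B\setminus A\in H$ and nonvanishing from the characterization $\{A: p_A\in I_{(H,B)}\}=H$ together with $T_e^*T_e=Q_{r(e)}\neq 0$. The only difference is that you spell out the verification of relation (4) at the breaking vertices and their primed copies in more detail than the paper, which simply asserts that the family is a representation.
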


\begin{proof}
By a standard argument similar to the proof of \cite[Theorem 2.11]{tom}, we may construct such universal $C^*$-algebra $C^*(\gh)$. Note that the universality implies that $C^*(\gh)$ is unique up to isomorphism. To show the last statement, we generate a representation for $\gh$ as follows. Suppose $C^*(\overline{\g})=C^*(s_e,p_A)$ and consider $I_{(H,B)}$ as an ideal of $C^*(\overline{\g})$ by the isomorphism of Proposition \ref{prop3.2}. If we define
$$\left\{
    \begin{array}{ll}
      Q_{[A]}:=p_A+I_{(H,B)} & \mathrm{for~~}[A]\in \pgo \\
      T_{e}:=s_e+I_{(H,B)} & \mathrm{for}~~ e\in \pgl,
    \end{array}
  \right.
$$
then the family $\{T_{e},Q_{[A]}:[A]\in \pgo,e\in \pgl\}$ is a representation for $\gh$ in the quotient $C^*$-algebra $C^*(\overline{\g})/I_{(H,B)}$. Note that the definition of $Q_{[A]}$'s is well-defined. Indeed, if $A_1\cup V=A_2 \cup V$ for some $V\in H$, then $p_{A_1}+p_{V\setminus A_1}=p_{A_2}+p_{V\setminus A_2}$ and hence $p_{A_1}+I_{(H,B)}=p_{A_2}+I_{(H,B)}$ by the facts $V\setminus A_1, V\setminus A_2\in H$.

Moreover, all elements $Q_{[A]}$ and $T_e$ are nonzero for $[\emptyset]\ne[A]\in\pgo$, $e\in\pgl$. In fact, if $Q_{[A]}=0$, then $p_A\in I_{(H,B)}$ and we get $A\in H$ by \cite[Theorem 6.12]{kat3}. Also, since $T_e^* T_e=Q_{r(e)}\ne0$, all partial isometries $T_e$ are nonzero.

Now suppose that $C^*(\gh)$ is generated by the family $\{t_e,q_{[A]}:[A]\in \pgo, ~ e\in \pgl\}$. By the universality of $C^*(\gh)$, there is a $*$-homomorphism $\phi:C^*(\gh)\rightarrow C^*(\overline{\g})/I_{(H,B)}$ such that $\phi(t_e)=T_e$ and $\phi(q_{[A]})=Q_{[A]}$, and thus, all elements $\{t_e,q_{[A]}:[\emptyset]\ne[A]\in \pgo, ~ e\in \pgl\}$ are nonzero.
\end{proof}

Using Lemma \ref{lem3.10}, one may easily show that
$$C^*(\gh)=\overline{\mathrm{span}}\big\{t_\alpha q_{[A]}t_\beta^*: \alpha,\beta\in (\gh)^*,~ r(\alpha)\cap [A]\cap r(\beta)\neq [\emptyset]\big\}.$$


\section{Uniqueness Theorems}

After defining the $C^*$-algebras of quotient ultragraphs, in this section, we prove the gauge invariant and the Cuntz-Krieger uniqueness theorems for them. To do this, we approach to a quotient ultragraph $C^*$-algebra by graph $C^*$-algebras and then apply the corresponding uniqueness theorems of graph $C^*$-algebras. This approach is a developed version of the dual graph method of \cite[Section 2]{rae} and \cite[Section 5]{tom} with more complications.

We fix again an ultragraph $\g$, an admissible pair $(H,B)$ in $\g$, and the quotient ultragraph $\gh=(\pGo,\pgo,\pgl,r,s)$.

\begin{defn}
We say that a vertex $[v]\in \pGo$ is a \emph{sink} if $s^{-1}([v])=\emptyset$. If $[v]$ emits finitely many edges of $\pgl$, $[v]$ is called a \emph{regular vertex}. We say $[v]$ is a \emph{singular vertex} if it is not regular. We denote the set of all singular vertices of $\pGo$ by
\[\sg:=\big\{[v]\in \pGo: |s^{-1}([v])|=0~ \mathrm{or}~\infty\big\}.\]
\end{defn}

Let $F$ be a finite subset of $\sg \cup \pgl$, and denote $F^0:=F\cap \sg$ and $F^1:=F\cap \pgl=\{e_1,\ldots,e_n\}$. We want to construct a special graph $G_F$ such that $C^*(G_F)$ is isomorphic to $C^*(t_e,q_{[v]}:[v]\in F^0,e\in F^1)$. For each $\omega=(\omega_1,\ldots, \omega_n)\in \{0,1\}^n\setminus \{0^n\}$, we write
$$r(\omega):=\bigcap_{\omega_i=1}r(e_i)\setminus \bigcup_{\omega_j=0}r(e_j) \mathrm{~~and~~} R(\omega):=r(\omega)\setminus \bigcup F^0.$$
Note that $r(\omega)\cap r(\nu)=[\emptyset]$ for distinct $\omega,\nu\in \{0,1\}\setminus \{0^n\}$. If
\begin{multline*}
    \Gamma_0:=\big\{\omega\in \{0,1\}^n\setminus\{0^n\}: \exists [v_1],\ldots,[v_m]\in\pgo $ such  that $\\
 R(\omega)=\bigcup_{i=1}^m[v_i] $ and $ \emptyset\neq s^{-1}([v_i])\subseteq F^1 $ for $ 1\leq i\leq m\big\},
\end{multline*}
we consider the finite set
\[\Gamma:=\left\{\omega\in \{0,1\}^n\setminus \{0^n\}: R(\omega)\neq [\emptyset] \mathrm{~and ~} \omega\notin \Gamma_0 \right\}.\]

Now we define the finite graph $G_F=(G_F^0,G_F^1,r_F,s_F)$ containing the vertices $G_F^0:=F^0 \cup F^1 \cup \Gamma$ and the edges
\begin{align*}
G_F^1:=&\left\{(e,f)\in F^1\times F^1: s(f)\subseteq r(e) \right\}\\
       &\cup \left\{(e,[v])\in F^1\times F^0: [v]\subseteq r(e) \right\}\\
       &\cup \left\{(e,\omega)\in F^1\times \Gamma: \omega_i=1 \mathrm{~~when~~} e=e_i \right\}
\end{align*}
with the source map $s_F(e,f)=s_F(e,[v])=s_F(e,\omega)=e$, and the range map $r_F(e,f)=f$, $r_F(e,[v])=[v]$, $r_F(e,\omega)=\omega$.

\begin{prop}\label{prop4.2}
Let $\gh$ be a quotient ultragraph and let $F$ be a finite subset of $\sg \cup \pgl$. If $C^*(\gh)=C^*(t_e,q_{[A]})$, then the elements
$$\begin{array}{ccc}
    Q_e:=t_et_e^*, & Q_{[v]}:=q_{[v]}(1-\sum_{e\in F^1}t_et_e^*), & Q_\omega:=q_{R(\omega)}(1-\sum_{e\in F^1}t_et_e^*) \\
    T_{(e,f)}:=t_eQ_f, & T_{(e,[v])}:=t_e Q_{[v]}, & T_{(e,\omega)}:=t_e Q_\omega
  \end{array}
$$
form a Cuntz-Krieger $G_F$-family generating the $C^*$-subalgebra $C^*(t_e,q_{[v]}:[v]\in F^0,e\in F^1)$ of $C^*(\gh)$. Moreover, all projections $Q_*$ are nonzero.
\end{prop}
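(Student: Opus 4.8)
Since $G_F$ is a finite ordinary graph, the plan is to check the three Cuntz--Krieger relations of Remark~\ref{rem2.4} directly for the proposed family (with $Q_x$ in the role of $p_x$ and $T_g$ in the role of $s_g$), then establish the two inclusions identifying the generated $C^*$-subalgebra, and finally prove non-vanishing of the $Q_*$'s. The computation that drives everything is the following: for $e\in F^1$ and $[A]\in\pgo$, relation~(3) of Definition~\ref{defn3.8} gives $t_et_e^*\le q_{s(e)}$, hence $q_{[A]}t_et_e^*=q_{[A]\cap s(e)}t_et_e^*$, and since $s(e)$ is a ``point'' of the algebra ($[A]\cap s(e)\in\{s(e),[\emptyset]\}$) this equals $t_et_e^*$ when $s(e)\subseteq[A]$ and equals $0$ otherwise. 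Consequently $q_{[A]}$ commutes with $\sum_{e\in F^1}t_et_e^*$ and $q_{[A]}\bigl(1-\sum_{e\in F^1}t_et_e^*\bigr)=q_{[A]}-\sum_{e\in F^1,\,s(e)\subseteq[A]}t_et_e^*$; applying this with $[A]=[v]$ and $[A]=R(\omega)$ shows $Q_{[v]}$ and $Q_\omega$ are projections. Using in addition the orthogonality of the ranges of the $t_e$, the disjointness of distinct vertices of $\pGo$ in $\pgo$, the inclusion $[v]\subseteq\bigcup F^0$ for $[v]\in F^0$, and $r(\omega)\cap r(\nu)=[\emptyset]$ for $\omega\neq\nu$, one checks routinely from the displayed fact that $\{Q_e\}_{e\in F^1}\cup\{Q_{[v]}\}_{[v]\in F^0}\cup\{Q_\omega\}_{\omega\in\Gamma}$ is a family of mutually orthogonal projections.

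\textbf{The Cuntz--Krieger relations.} Every relation $T_g^*T_g=Q_{r_F(g)}$ reduces to the same observation: e.g.\ $T_{(e,\omega)}^*T_{(e,\omega)}=Q_\omega q_{r(e)}Q_\omega$, and for $e=e_k$ with $\omega_k=1$ one has $R(\omega)\subseteq r(e)$, whence $Q_\omega q_{r(e)}=Q_\omega$ and the product is $Q_\omega$; the cases $T_{(e,f)}$ (using $s(f)\subseteq r(e)$) and $T_{(e,[v])}$ (using $[v]\subseteq r(e)$) are identical. The relation $T_gT_g^*\le Q_{s_F(g)}$ is immediate since $T_gT_g^*=t_eQ_* t_e^*\le t_et_e^*=Q_e$. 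The third relation is needed only at vertices of $G_F$ emitting an edge, i.e.\ (as every edge of $G_F$ issues from $F^1$) only at the vertices $e_k\in F^1$, where it amounts to
\[
q_{r(e_k)}=\sum_{f\in F^1,\ s(f)\subseteq r(e_k)}Q_f+\sum_{[v]\in F^0,\ [v]\subseteq r(e_k)}Q_{[v]}+\sum_{\omega\in\Gamma,\ \omega_k=1}Q_\omega .
\]
To prove this identity, decompose $r(e_k)=\bigsqcup_{\omega_k=1}r(\omega)$ and $r(\omega)=R(\omega)\sqcup\bigsqcup_{[v]\in F^0,\,[v]\subseteq r(\omega)}[v]$; for $\omega\in\Gamma$ substitute the definition of $Q_\omega$, for $\omega\in\Gamma_0$ use relation~(4) of Definition~\ref{defn3.8} to rewrite $q_{R(\omega)}=\sum_{f\in F^1,\,s(f)\subseteq R(\omega)}t_ft_f^*$, and for each $[v]\in F^0$ use $q_{[v]}=Q_{[v]}+\sum_{f\in F^1,\,s(f)=[v]}t_ft_f^*$; since every relevant edge and every relevant $F^0$-vertex falls into exactly one block, reassembling gives the identity. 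Conjugating by $t_{e_k}$ and using $t_{e_k}^*t_{e_k}=q_{r(e_k)}$ then yields $\sum_{s_F(g)=e_k}T_gT_g^*=t_{e_k}q_{r(e_k)}t_{e_k}^*=Q_{e_k}$.

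\textbf{Generation and the easy non-vanishing.} For the generated subalgebra, the inclusion ``$\subseteq$'' holds because $q_{R(\omega)}$ lies in the $C^*$-algebra generated by the $q_{r(e_i)}=t_{e_i}^*t_{e_i}$ and the $q_{[v]}$ with $[v]\in F^0$ (the set operations defining $R(\omega)$ become products, sums and differences of projections), while the $T_*$ are visibly words in $\{t_e,q_{[v]}\}$; the inclusion ``$\supseteq$'' holds because $t_e=\sum_{s_F(g)=e}T_g$, which follows from the displayed identity since $t_eq_{r(e)}=t_e$, together with $q_{[v]}=Q_{[v]}+\sum_{f\in F^1,\,s(f)=[v]}Q_f$ for $[v]\in F^0$. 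Next, $Q_e=t_et_e^*\neq0$ by Theorem~\ref{thm3.11}, and for $[v]\in F^0$ we have $Q_{[v]}=q_{[v]}-\sum_{f\in F^1,\,s(f)=[v]}t_ft_f^*$, which equals $q_{[v]}\neq0$ when $[v]$ is a sink and dominates $t_ht_h^*\neq0$ for any $h\in\pgl\setminus F^1$ with $s(h)=[v]$ when $[v]$ emits infinitely many edges.

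\textbf{The main obstacle: $Q_\omega\neq0$ for $\omega\in\Gamma$.} This is where the real work lies. Suppose $Q_\omega=0$; then $q_{R(\omega)}=\sum_{f\in S}t_ft_f^*$ with $S:=\{f\in F^1:s(f)\subseteq R(\omega)\}$ finite, and we write $R(\omega)=[A]$ with $A\in\overline{\g}^0$, $A\notin H$. For every vertex $[v]$ of $\gh$ with $[v]\subseteq[A]$ (equivalently $v\in A$ and $\{v\}\notin H$) we have $q_{[v]}\le q_{[A]}$, hence $q_{[v]}=\sum_{f\in S,\,s(f)=[v]}t_ft_f^*$; this forbids $[v]$ from being a sink and (it would otherwise dominate some $t_ht_h^*\neq0$ with $h\notin S$) from emitting infinitely many edges, and then relation~(4) of Definition~\ref{defn3.8} forces $\emptyset\ne s^{-1}([v])\subseteq S\subseteq F^1$. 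As $F^1$ is finite and distinct vertices carry disjoint nonempty edge sets, only finitely many such $[v]$ occur, say $[v_1],\ldots,[v_m]$. Now the ``residue'' $C:=A\setminus\{v_1,\ldots,v_m\}\in\overline{\g}^0$ must lie in $H$: otherwise $q_{[C]}\neq0$ and $q_{[C]}\le q_{[A]}$, so $q_{[C]}=\sum_{f\in S}q_{[C]\cap s(f)}\,t_ft_f^*$; but every $f\in S$ has $s(f)=[u]$ with $u\in A$ and $\{u\}\notin H$, hence $u\in\{v_1,\ldots,v_m\}$ and $u\notin C$, so each summand vanishes and $q_{[C]}=0$, a contradiction. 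Therefore $A\sim\{v_1,\ldots,v_m\}$, i.e.\ $R(\omega)=\bigcup_{i=1}^m[v_i]$ with $\emptyset\ne s^{-1}([v_i])\subseteq F^1$, which means $\omega\in\Gamma_0$, contradicting $\omega\in\Gamma$. Hence $Q_\omega\neq0$. The two points requiring genuine care are this ``residue is hereditary'' step and the block bookkeeping in the $\mathrm{CK}_3$ identity; everything else is formal manipulation of the relations in Definition~\ref{defn3.8}.
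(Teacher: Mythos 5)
Your proposal is correct and follows essentially the same route as the paper: the same mutual-orthogonality observations, the same key identity $q_{r(e_i)}=\sum_{s(f)\subseteq r(e_i)}Q_f+\sum_{[v]\subseteq r(e_i)}Q_{[v]}+\sum_{\omega_i=1}Q_\omega$ driving the third Cuntz--Krieger relation, and the same formulas $t_{e}=\sum_{s_F(g)=e}T_g$ and $q_{[v]}=Q_{[v]}+\sum_{s(f)=[v]}Q_f$ giving generation. The one place you genuinely diverge is the non-vanishing of $Q_\omega$ for $\omega\in\Gamma$: the paper disposes of this in two cases (an edge $f\in\pgl\setminus F^1$ with $s(f)\subseteq R(\omega)$, or a sink contained in $R(\omega)$) and leaves implicit that $\omega\notin\Gamma_0$ forces at least one of these to occur, whereas you prove the contrapositive in full --- $Q_\omega=0$ forces every vertex $[v]\subseteq R(\omega)$ to be a regular non-sink with $\emptyset\neq s^{-1}([v])\subseteq F^1$, finiteness of $F^1$ bounds the number of such vertices, and the residue $A\setminus\{v_1,\dots,v_m\}$ is shown to lie in $H$, so that $R(\omega)=\bigcup_i[v_i]$ and $\omega\in\Gamma_0$, a contradiction. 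That ``residue'' step is exactly the content the paper's ``in either case'' glosses over, and your version supplies it correctly; the only cost is length.
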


\begin{proof}
We first note that all the projections $Q_e$, $Q_{[v]}$, and $Q_\omega$ are nonzero. Indeed, each $[v]\in F^0$ is a singular vertex in $\gh$, so $Q_{[v]}$ is nonzero. Also, by definition, for every $\omega\in \Gamma$ we have $\omega\notin \Gamma_0$ and $R(\omega)\neq [\emptyset]$. Hence, for any $\omega\in \Gamma$, if there is an edge $f\in \pgl\setminus F^1$ with $s(f)\subseteq R(\omega)$, then $0\neq t_ft_f^*\leq Q_{\omega}$. If there is a sink $[w]$ such that $[w]\subseteq R(\omega)=r(\omega)\setminus \bigcup F^0$, then $0\neq q_{[w]}\leq q_{R(\omega)}(1-\sum_{e\in F^1}t_et_e^*)=Q_{\omega}$. Thus $Q_{\omega}$ is nonzero in either case. In addition, the projections $Q_e$, $Q_{[v]}$, and $Q_\omega$ are mutually orthogonal because of the factor $1-\sum_{e\in F^1}t_et_e^*$ and the definition of $R(\omega)$.

Now we show the collection $\{T_x,Q_a:a\in G_F^0, x\in G_F^1\}$ is a Cuntz-Krieger $G_F$-family by checking the relations (1)-(3) in Remark \ref{rem2.4}.

\underline{(1)}: Since $Q_{[v]},Q_\omega \leq q_{r(e)}$ for $(e,[v]),(e,\omega)\in G_F^1$, we have
$$T^*_{(e,f)}T_{(e,f)}=Q_f t_e^* t_e Q_f=t_f t_f^* q_{r(e)} t_f t_f^*= t_f q_{r(f)} t_f^*=Q_f,$$
$$T^*_{(e,[v])}T_{(e,[v])}=Q_{[v]}t_e^* t_e Q_{[v]}=Q_{[v]} q_{r(e)} Q_{[v]}=Q_{[v]},$$
and
$$T^*_{(e,\omega)}T_{(e,\omega)}=Q_\omega t_e^* t_e Q_\omega=Q_\omega q_{r(e)} Q_\omega= Q_\omega.$$

\underline{(2)}: This relation may be checked similarly.

\underline{(3)}: Note that any element of $F^0 \cup \Gamma$ is a sink in $G_F$. So, fix some $e_i\in F^1$ as a vertex of $G_F^0$. Write $q_{F^0}:=\sum_{[v]\in F^0}q_{[v]}$. We compute
\begin{enumerate}[(i)]
\item
$$q_{r(e_i)} \sum_{\substack{f\in F^1 \\ s(f)\subseteq r(e_i)}}Q_f=q_{r(e_i)}\sum_{\substack{f\in F^1 \\ s(f)\subseteq r(e_i)}} t_f t_f^*=q_{r(e_i)}\sum_{f\in F^1}t_f t_f^*;$$
\item
\begin{align*}
q_{r(e_i)}\sum_{\substack {[v]\in F^0,\\ [v]\subseteq r(e_i)}}Q_{[v]}&=q_{r(e_i)}\sum_{[v]\in F^0}q_{[v]}(1-\sum_{e\in F^1}t_e t_e^*)\\
&=q_{r(e_i)} q_{F^0}(1-\sum_{e\in F^1}t_e t_e^*);
\end{align*}
\item
$$\sum_{\omega\in \Gamma,\omega_i=1}Q_\omega =\sum_{\omega\in \Gamma,\omega_i=1}q_{R(\omega)}(1-\sum_{e\in F^1}t_e t_e^*)=\sum_{\omega_i=1}q_{R(\omega)}(1-\sum_{e\in F^1}t_e t_e^*),$$
because $\sum_{\omega_i=1}q_{R(\omega)}=q_{r(e_i)}(1-q_{F^0})$.
\end{enumerate}
We can use these relations to get
\begin{align*}
\sum_{s(f)\subseteq r(e_i)}&T_{(e_i,f)}+\sum_{[v]\in F^0,~[v]\subseteq r(e_i)}T_{(e_i,[v])}+ \sum_{\omega\in \Gamma,~\omega_i=1}T_{(e_i,\omega)}\\
&=t_{e_i} \left(q_{r(e_i)}\sum_{e\in F^1}t_et_e^*+q_{r(e_i)} q_{F^0}(\sum_{e\in F^1}t_et_e^*)+ q_{r(e_i)}(1-q_{F^0})(\sum_{e\in F^1}t_et_e^*) \right)\\
&=t_{e_i}q_{r(e_i)}\left(\sum_{e\in F^1}t_et_e^*+(q_{F^0}+1-q_{F^0})(1-\sum_{e\in F^1}t_et_e^*) \right)\\
&=t_{e_i}.
\end{align*}
\begin{flushright}
        (4.1)
\end{flushright}
Now if $e_i$ is not a sink as a vertex in $G_F$ (i.e. $|\{x\in G_F^1:s_F(x)=e_i\}|>0$), we conclude that
\begin{align*}
\sum_{f\in F^1,~s(f)\subseteq r(e_i)}&T_{(e_i,f)}T_{(e_i,f)}^*+\sum_{[v]\in F^0,~[v]\subseteq r(e_i)}T_{(e_i,[v])}T_{(e_i,[v])}^*+ \sum_{\omega\in \Gamma,~\omega_i=1}T_{(e_i,\omega)}T_{(e_i,\omega)}^*\\
&=\sum t_{e_i}Q_f t_{e_i}^*+\sum t_{e_i}Q_{[v]} t_{e_i}^*+\sum t_{e_i}Q_\omega t_{e_i}^*\\
&=t_{e_i}q_{r(e_i)}(\sum Q_f +\sum Q_{[v]}+\sum Q_\omega)t_{e_i}^*\\
&=t_{e_i}t_{e_i}^*=Q_{e_i}.
\end{align*}
which establishes the relation (3).

Furthermore, equation (4.1) says that $t_{e_i}\in C^*(T_*,Q_*)$ for every $e_i\in F^1$. Also, for each $[v]\in F^0$, we have
\begin{align*}
Q_{[v]}+\sum_{e\in F^1, s(e)=[v]} Q_e&=t_{[v]}(1-\sum_{e\in F^1}t_et_e^*)+\sum_{e\in F^1, s(e)=[v]}t_e t_e^*\\
&=t_{[v]}-t_{[v]}\sum_{e\in F^1} t_et_e^*+t_{[v]}\sum_{e\in F^1} t_et_e^*\\
&=t_{[v]}.
\end{align*}
Therefore, the family $\{T_x,Q_a:a\in G_F^0, x\in G_F^1\}$ generates the $C^*$-subalgebra $C^*(\{t_e,q_{[v]}:e\in F^1, [v]\in F^0\})$ of $C^*(\gh)$ and the proof is complete.
\end{proof}

\begin{cor}\label{cor4.3}
If $F$ is a finite subset of $\sg \cup \pgl$, then $C^*(G_F)$ is isometrically isomorphic to the $C^*$-subalgebra of $C^*(\gh)$ generated by $\{t_e,q_{[v]}: [v]\in F^0, e\in F^1\}$.
\end{cor}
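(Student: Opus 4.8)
The plan is to combine Proposition \ref{prop4.2} with the gauge-invariant uniqueness theorem for graph $C^*$-algebras. By Proposition \ref{prop4.2}, the elements $\{T_x, Q_a : a\in G_F^0,\ x\in G_F^1\}$ form a Cuntz-Krieger $G_F$-family inside $C^*(\gh)$ whose vertex projections $Q_a$ are all nonzero, and this family generates the $C^*$-subalgebra $C^*(t_e, q_{[v]} : [v]\in F^0,\ e\in F^1)$. By the universal property of $C^*(G_F)$, there is a surjective $*$-homomorphism $\pi : C^*(G_F)\to C^*(t_e, q_{[v]} : [v]\in F^0,\ e\in F^1)$ sending the canonical generators of $C^*(G_F)$ to the $T_x$'s and $Q_a$'s. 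It remains to prove that $\pi$ is injective, and for this I would invoke the gauge-invariant uniqueness theorem (e.g. \cite[Theorem 2.2]{bat} or its analogue) for the graph $C^*$-algebra $C^*(G_F)$: since $G_F$ is a finite graph and all the vertex projections $Q_a$ are nonzero, it suffices to exhibit a circle action on the target $C^*$-subalgebra that intertwines with the canonical gauge action on $C^*(G_F)$ under $\pi$.

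The circle action on the target would be obtained by restricting the gauge action $\gamma$ of $C^*(\gh)$. First I would check that the subalgebra $C^*(t_e, q_{[v]} : [v]\in F^0,\ e\in F^1)$ is invariant under $\gamma$: each generator $q_{[v]}$ is gauge-fixed and each $t_e$ is sent to $zt_e$, so any polynomial in the generators is mapped into the subalgebra, and by continuity the same holds for the closure. Next I would verify the intertwining relation $\pi\circ\beta_z = \gamma_z\circ\pi$ on generators, where $\beta$ is the canonical gauge action on $C^*(G_F)$. For a vertex generator this is immediate since both sides fix it. For an edge generator $(e,f)$, $(e,[v])$, or $(e,\omega)$, note that $\gamma_z(T_{(e,\ast)}) = \gamma_z(t_e Q_\ast) = z\, t_e Q_\ast = z\, T_{(e,\ast)}$ because each $Q_\ast$ is a polynomial in gauge-fixed elements (products of $t_f t_f^*$ and $q_{[A]}$), hence itself gauge-fixed; this matches $\beta_z$ acting on the corresponding edge generator of $C^*(G_F)$, which also multiplies it by $z$. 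Thus $\pi$ intertwines the two actions, and the gauge-invariant uniqueness theorem for $C^*(G_F)$ forces $\pi$ to be injective, hence an isometric isomorphism.

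The main obstacle I anticipate is entirely bookkeeping: one must be careful that the gauge action indeed restricts to the subalgebra and that every vertex generator of $C^*(G_F)$ (which includes the elements of $F^1$ viewed as vertices, the elements of $F^0$, and the symbols in $\Gamma$) corresponds under $\pi$ to a nonzero projection — but this is precisely the content of the ``all projections $Q_*$ are nonzero'' clause already established in Proposition \ref{prop4.2}. There is no delicate analytic step; the isometry is automatic once injectivity of the $*$-homomorphism $\pi$ is known, since injective $*$-homomorphisms between $C^*$-algebras are isometric. I would therefore state the proof concisely: apply the gauge-invariant uniqueness theorem for graphs to $\pi$, using the restricted gauge action of $C^*(\gh)$ as the required intertwining circle action, and conclude.
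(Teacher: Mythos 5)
Your proposal is correct and follows essentially the same route as the paper: obtain the surjection $\pi$ from the universal property of $C^*(G_F)$ via the Cuntz--Krieger $G_F$-family of Proposition \ref{prop4.2}, then deduce injectivity from the gauge-invariant uniqueness theorem using the nonvanishing of the projections $Q_*$. The only difference is that you spell out the intertwining circle action (the restriction of $\gamma$ to the subalgebra, with each $Q_*$ gauge-fixed and each $T_{(e,*)}$ scaled by $z$), a verification the paper leaves implicit.
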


\begin{proof}
Suppose that $X$ is the $C^*$-subalgebra generated by $\{t_e,q_{[v]}: [v]\in F^0, e\in F^1\}$ and let $\{T_x,Q_a: a\in G_F^0,x\in G_F^1\}$ be the Cuntz-Krieger $G_F$-family in Proposition \ref{prop4.2}. If $C^*(G_F)=C^*(s_x,p_a)$, then there exists a $*$-homomorphism $\phi:C^*(G_F)\rightarrow X$ with $\phi(p_a)=Q_a$ and $\phi(s_x)=T_x$ for every $a\in G_F^0$, $x\in G_F^1$. Since each $Q_a$ is nonzero by Proposition \ref{prop4.2}, the gauge invariant uniqueness theorem implies that $\phi$ is injective. Moreover, the family $\{T_x,Q_a\}$ generates $X$, so $\phi$ is an isomorphism.
\end{proof}

Note that if $F_1\subseteq F_2$ are two finite subsets of $\sg \cup \pgl$ and $X_1,X_2$ are the $C^*$-subalgebras of $C^*(\gh)$ corresponding to $G_{F_1}$ and $G_{F_2}$, respectively, we then have $X_1\subseteq X_2$ by Proposition \ref{prop4.2}.

\begin{rem}
Using the relations in Definition \ref{defn3.8}, each $q_{[A]}$ for $[A]\in \pGo$, can be produced by the elements of
$$\{q_{[v]}:[v]\in \sg\} \cup \{t_e:e\in \pgl\}$$
with finitely many operations. So, the $*$-subalgebra of $C^*(\gh)$ generated by
$$\{q_{[v]}:[v]\in \sg\} \cup \{t_e:e\in \pgl\}$$
is dense in $C^*(\gh)$.
\end{rem}

As for graph $C^*$-algebras, we can apply the universal property to have a strongly continuous \emph{gauge action} $\gamma:\mathbb{T}\rightarrow \mathrm{Aut}(C^*(\gh))$ such that
$$\gamma_z(t_e)=zt_e ~~~ \mathrm{and} ~~~ \gamma_z(q_{[A]})=q_{[A]}$$
for every $[A]\in \pgo$, $e\in \pgl$, and $z\in\mathbb{T}$. Now we are ready to prove the uniqueness theorems.

\begin{thm}[The Gauge Invariant Uniqueness Theorem]\label{thm4.5}
Let $\gh$ be a quotient ultragraph and let $\{T_e,Q_{[A]}\}$ be a representation for $\gh$ such that $Q_{[A]}\neq 0$ for $[A]\neq [\emptyset]$. If $\pi_{T,Q}:C^*(\gh)\rightarrow C^*(T_e,Q_{[A]})$ is the $*$-homomorphism satisfying $\pi_{T,Q}(t_e)=T_e$, $\pi_{T,Q}(q_{[A]})=Q_{[A]}$, and there is a strongly continuous action $\beta$ of $\mathbb{T}$ on $C^*(T_e,Q_{[A]})$ such that $\beta_z\circ \pi_{T,Q}=\pi_{T,Q}\circ \gamma_z$ for every $z\in \mathbb{T}$, then $\pi_{T,Q}$ is faithful.
\end{thm}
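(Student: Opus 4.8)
The standard route to a gauge-invariant uniqueness theorem is: (i) produce a faithful conditional expectation onto the fixed-point algebra of the gauge action, (ii) show the fixed-point algebra is an increasing union of "finite" subalgebras on which $\pi_{T,Q}$ is manifestly injective, and (iii) combine these with the intertwining hypothesis to conclude. I would follow exactly this scheme, using the graph-algebra reduction of Proposition \ref{prop4.2} and Corollary \ref{cor4.3} to handle step (ii).

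First I would record that averaging over $\mathbb{T}$ against normalized Haar measure gives faithful conditional expectations $\Phi^\gamma$ on $C^*(\gh)$ and $\Phi^\beta$ on $C^*(T_e,Q_{[A]})$ onto the respective fixed-point algebras, and that the intertwining hypothesis $\beta_z\circ\pi_{T,Q}=\pi_{T,Q}\circ\gamma_z$ forces $\pi_{T,Q}\circ\Phi^\gamma=\Phi^\beta\circ\pi_{T,Q}$. Since $\Phi^\beta$ is faithful and positive, a routine argument (if $\pi_{T,Q}(a^*a)=0$ then $\Phi^\beta(\pi_{T,Q}(a^*a))=0$, hence $\pi_{T,Q}(\Phi^\gamma(a^*a))=0$) reduces faithfulness of $\pi_{T,Q}$ on all of $C^*(\gh)$ to faithfulness of its restriction to the fixed-point algebra $C^*(\gh)^\gamma$.

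Next I would identify $C^*(\gh)^\gamma$. From the spanning description $C^*(\gh)=\overline{\mathrm{span}}\{t_\alpha q_{[A]}t_\beta^*\}$, the fixed-point algebra is the closed span of the diagonal terms $t_\alpha q_{[A]} t_\beta^*$ with $|\alpha|=|\beta|$. The key structural point — and the main technical obstacle — is to show that $C^*(\gh)^\gamma$ is the closure of an increasing union of subalgebras each of which is isomorphic, via $\pi_{T,Q}$-compatible maps, to a graph $C^*$-algebra $C^*(G_F)$ for a suitable finite $F\subseteq\sg\cup\pgl$, where one also allows the finite-path "inflations" of the $G_F$ construction (i.e. passing from $C^*(t_e,q_{[v]}:[v]\in F^0,e\in F^1)$ to its degree-$k$ analogue built from paths of length $\le k$). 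Concretely, I would argue that every element of $C^*(\gh)^\gamma$ is a norm-limit of finite sums $\sum t_\alpha q_{[A]} t_\beta^*$ with $|\alpha|=|\beta|\le k$ and all edges and set-coordinates drawn from a fixed finite set, and that such sums lie in a subalgebra of the type produced (with the evident iteration) by Proposition \ref{prop4.2}; on each such subalgebra, Corollary \ref{cor4.3} identifies it with $C^*(G_F)$ and shows every vertex projection of the relevant Cuntz-Krieger family is nonzero. Applying the gauge-invariant uniqueness theorem for ordinary graph $C^*$-algebras to the $G_F$-family $\{\pi_{T,Q}(T_x),\pi_{T,Q}(Q_a)\}$ — whose vertex projections are nonzero precisely because the corresponding $Q_{[A]}=\pi_{T,Q}(q_{[A]})$ are nonzero by hypothesis (sinks and $R(\omega)$-projections unravel to nonzero $Q_{[w]}$ or $T_fT_f^*$ exactly as in the proof of Proposition \ref{prop4.2}) — gives that $\pi_{T,Q}$ is isometric on each $G_F$-subalgebra. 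Taking the union and closing, $\pi_{T,Q}$ is isometric on $C^*(\gh)^\gamma$, hence faithful there, and the previous paragraph finishes the proof.

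The delicate part is step (ii): making precise the "inflation" of the $G_F$ construction to paths of length $\le k$ and checking that these subalgebras are directed and exhaust $C^*(\gh)^\gamma$, while keeping the nonvanishing of all vertex projections so that the graph uniqueness theorem applies. I expect this to require a careful bookkeeping argument of the kind in \cite[Section 5]{tom}, but no genuinely new idea beyond combining Proposition \ref{prop4.2}, Corollary \ref{cor4.3}, and the conditional-expectation formalism.
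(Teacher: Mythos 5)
Your proposal is correct in substance, and its engine --- Proposition \ref{prop4.2}, Corollary \ref{cor4.3}, and the gauge-invariant uniqueness theorem for finite graphs --- is exactly the paper's. The difference is that the paper dispenses with your steps (i) and (iii) entirely: it picks an increasing sequence $\{F_n\}$ with $\bigcup_n F_n=\sg\cup\pgl$, notes (Remark 4.4 together with the monotonicity observation after Corollary \ref{cor4.3}) that the subalgebras $C^*(G_{F_n})\cong C^*(t_e,q_{[v]}:[v]\in F_n^0,\,e\in F_n^1)$ increase and have dense union in \emph{all} of $C^*(\gh)$, not merely in the fixed-point algebra, and applies the graph gauge-invariant uniqueness theorem to each $\pi_{T,Q}\circ\pi_n$; injectivity on a dense increasing union of $C^*$-subalgebras is automatically isometric there, hence everywhere. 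Your conditional-expectation reduction to $C^*(\gh)^\gamma$ is therefore redundant --- the graph-level theorem already performs the gauge averaging internally, and the hypothesis $\beta_z\circ\pi_{T,Q}=\pi_{T,Q}\circ\gamma_z$ is consumed precisely in checking its hypotheses for the family $\{\pi_{T,Q}(T_x),\pi_{T,Q}(Q_a)\}$, since $\gamma$ restricts to the canonical gauge action on $C^*(G_{F_n})$ (the projections $Q_e,Q_{[v]},Q_\omega$ are fixed and $T_{(e,\cdot)}\mapsto zT_{(e,\cdot)}$). One caveat: your description of $C^*(\gh)^\gamma$ as an increasing union of copies of $C^*(G_F)$ is not literally right, because the $C^*(G_F)$'s contain elements of nonzero gauge degree such as $T_{(e,f)}=t_eQ_f$ and so are not contained in the core; but this does not damage your argument, since you end up applying the graph uniqueness theorem to the full $G_F$-family anyway, and it means the ``inflation to paths of length $\le k$'' bookkeeping you flag as the delicate point can simply be dropped. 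Your verification that the image vertex projections are nonzero (via sinks, exits outside $F^1$, and the definition of $\Gamma$) is the right one and is exactly what Proposition \ref{prop4.2} provides.
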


\begin{proof}
Select an increasing sequence $\{F_n\}$ of finite subsets of $\sg \cup \pgl$ such that $\cup_{n=1}^\infty F_n=\sg\cup \pgl$. For each $n$, Corollary \ref{cor4.3} gives an isomorphism
$$\pi_n:C^*(G_{F_n})\rightarrow C^*(\{t_e,q_{[v]}:[v]\in F^0,e\in F^1\})$$
that respects the generators. We can apply the gauge invariant uniqueness theorem for graph $C^*$-algebras to see that the homomorphism
$$\pi_{T,Q}\circ \pi_n:C^*(G_{F_n})\rightarrow C^*(T_e,Q_{[A]})$$
is faithful. Hence, for every $F_n$, the restriction of $\pi_{T,Q}$ on the $*$-subalgebra of $C^*(\gh)$ generated by $\{t_e,q_{[v]}:[v]\in F_n^0, e\in F_n^1\}$ is faithful. This turns out that $\pi_{T,Q}$ is injective on the $*$-subalgebra $C^*(t_e,q_{[v]}:[v]\in \sg, e\in \pgl)$. Since, this subalgebra is dense in $C^*(\gh)$, we conclude that $\pi_{T,Q}$ is faithful.
\end{proof}

To prove a verson of Cuntz-Krieger uniqueness theorem, we extend Condition (L) for quotient ultragraphs.

\begin{defn}
We say that $\gh$ satisfies \emph{Condition (L)} if for every loop $\alpha=e_1\ldots e_n$ in $\gh$, at least one of the following conditions holds:
\begin{enumerate}[(i)]
  \item $r(e_i)\neq s(e_{i+1})$ for some $1\leq i\leq n$, where $e_{i+1}:=e_1$ (or equivalently, $r(e_i)\setminus s(e_{i+1})\ne [\emptyset]$).
  \item $\alpha$ has an exit; that means, there exists $f\in \pgl$ such that $s(f)\subseteq r(e_i)$ and $f\neq e_{i+1}$ for some $1\leq i\leq n$.
\end{enumerate}
\end{defn}

\begin{lem}\label{lem4.7}
Let $F$ be a finite subset of $\sg \cup \pgl$. If $\gh$ satisfies Condition (L), so does the graph $G_F$.
\end{lem}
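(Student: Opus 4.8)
The plan is to argue by contradiction: suppose $G_F$ contains a loop $x_1 x_2 \ldots x_m$ (with $m \geq 1$) that has no exit and satisfies $r_F(x_i) = s_F(x_{i+1})$ for all $i$ (indices mod $m$). Each edge $x_j$ of $G_F$ has the form $(e^{(j)}, \cdot)$ for some $e^{(j)} \in F^1$, and $s_F(x_j) = e^{(j)}$, so the requirement $r_F(x_j) = s_F(x_{j+1})$ forces $r_F(x_j) = e^{(j+1)} \in F^1 \subseteq G_F^0$. Thus every $x_j$ is of the first type, $x_j = (e^{(j)}, e^{(j+1)})$ with $s(e^{(j+1)}) \subseteq r(e^{(j)})$ in $\gh$. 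Consequently $e^{(1)} e^{(2)} \ldots e^{(m)}$ is a loop in the quotient ultragraph $\gh$ (the condition $s(e^{(j+1)}) \subseteq r(e^{(j)})$ is exactly what is needed, together with $s(e^{(1)}) \subseteq r(e^{(m)})$ coming from $x_m$ closing up).

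Next I would invoke Condition (L) for $\gh$ applied to this loop $e^{(1)} \ldots e^{(m)}$, which gives one of two alternatives, and derive a contradiction with the assumption that the $G_F$-loop has no exit and is ``tight'' (i.e.\ $r_F(x_i) = s_F(x_{i+1})$ always). In case (i), there is an index $i$ with $r(e^{(i)}) \setminus s(e^{(i+1)}) \neq [\emptyset]$. Using the definition of $\Gamma$ and $\Gamma_0$, I would show that this leftover part of $r(e^{(i)})$, after removing $\bigcup F^0$, is either captured by some $\omega \in \Gamma$ with $\omega_i = 1$ — giving an extra edge $(e^{(i)}, \omega)$ out of $e^{(i)}$ in $G_F$ that is not $x_{i+1}$, hence an exit — or it is absorbed into a vertex $[v] \in F^0$ or a vertex of $F^1$, again producing an extra outgoing edge from $e^{(i)}$ in $G_F^1$. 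Either way the $G_F$-loop has an exit, a contradiction. In case (ii), there is $f \in \pgl$ with $s(f) \subseteq r(e^{(i)})$ and $f \neq e^{(i+1)}$; if $f \in F^1$ then $(e^{(i)}, f) \in G_F^1$ is an exit for the $G_F$-loop, and if $f \notin F^1$ then $s(f) \subseteq r(e^{(i)})$ forces, via the definitions of $R(\omega)$ and $\Gamma$, that the relevant $\omega$ with $\omega_i=1$ lies in $\Gamma$ (not in $\Gamma_0$, since $s(f)$ is not contained in $F^1$-emitting vertices), so again $(e^{(i)}, \omega) \in G_F^1$ is an exit.

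The bookkeeping core of the argument is the case analysis that translates ``extra range in $\gh$'' into ``extra edge in $G_F$'': one has to carefully track how $r(e^{(i)})$ decomposes across the atoms $r(\omega)$, how $\bigcup F^0$ is subtracted off, and precisely when $\omega$ falls into $\Gamma_0$ versus $\Gamma$. The main obstacle I anticipate is handling the $\Gamma_0$ exclusion cleanly: one must verify that whenever a piece of $r(e^{(i)})$ genuinely fails to be matched by $s(e^{(i+1)})$, the corresponding $\omega$ cannot be in $\Gamma_0$ — i.e.\ it is not the case that the leftover decomposes as a union of vertices each of which emits only (finitely many) edges lying in $F^1$ in a way that would already be accounted for by the $(e^{(i)}, f)$ and $(e^{(i)}, [v])$ edges. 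I would organize this by noting that if $\omega \in \Gamma_0$ then $R(\omega) = \bigcup [v_i]$ with each $[v_i]$ emitting a nonempty finite set of edges all in $F^1$, so those $[v_i]$ contribute edges $(e^{(i)}, f)$ with $f \in F^1$, $s(f) = [v_i] \subseteq r(e^{(i)})$; choosing such an $f \neq e^{(i+1)}$ (possible unless $R(\omega)$ is a single vertex emitting the single edge $e^{(i+1)}$, which would contradict $r_F(x_i) = s_F(x_{i+1}) = e^{(i+1)} \in F^1 \subseteq G_F^0$ meaning $x_i$ is of type one, not type $(e^{(i)},\omega)$) yields the exit. Collecting these cases completes the contradiction, so $G_F$ satisfies Condition (L).
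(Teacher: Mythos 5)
Your proposal follows essentially the same route as the paper's proof: loops in $G_F$ necessarily lift to loops $e^{(1)}\cdots e^{(m)}$ in $\gh$ because the vertices in $F^0\cup\Gamma$ are sinks, and the two alternatives of Condition (L) for that lifted loop are translated back into extra outgoing edges of type $(e,f)$, $(e,[v])$, or $(e,\omega)$ in $G_F$. The one subcase to tidy is $f\notin F^1$ with $s(f)\in F^0$ in alternative (ii): there the exit is the edge $(e^{(i)},s(f))$ rather than an edge into some $\omega\in\Gamma$, since $s(f)$ is removed from every $R(\omega)$ by definition --- which is exactly the dichotomy ($[v]\in F^0$ versus $[v]\subseteq R(\omega)$) the paper spells out at that step.
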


\begin{proof}
Suppose that $\gh$ satisfies Condition (L). As the elements of $F^0\cup \Gamma$ are sinks in $G_F$, every loop in $G_F$ is of the form $\widetilde{\alpha}=(e_1,e_2)\ldots (e_n, e_1)$ corresponding with a loop $\alpha=e_1\ldots e_n$ in $\gh$. So, fix a loop $\widetilde{\alpha}=(e_1,e_2)\ldots (e_n, e_1)$ in $G_F$. Then $\alpha=e_1\ldots e_n$ is a loop in $\gh$ and by Condition (L), one of the following holds:
\begin{enumerate}[(i)]
\item $r(e_i)\neq s(e_{i+1})$ for some $1\leq i\leq n$, where $e_{i+1}:=e_1$, or
\item there exists $f\in \pgl$ such that $s(f)\subseteq r(e_i)$ and $f\neq e_{i+1}$ for some $1\leq i\leq n$.
\end{enumerate}

We can suppose in the case (i) that $s(e_{i+1})\subsetneq r(e_i)$ and $r(e_i)$ emits only the edge $e_{i+1}$ in $\gh$. Then, by the definition of $\Gamma$, there exists either $[v]\in F^0$ with $[v]\subseteq r(e_i)\setminus s(e_{i+1})$, or $\omega \in \Gamma $ with $\omega_i=1$. Thus, either $(e_i,[v])$ or $(e_i,\omega)$ is an exit for the loop $\widetilde{\alpha}$ in $G_F$, respectively.

Now assume case (ii) holds. If $f\in F^1$, then $(e_i,f)$ is an exit for $\widetilde{\alpha}$. If $f\notin F^1$, for $[v]:=s(f)$ we have either $[v]\notin F^0$ or
$$\exists \omega\in \Gamma ~ \mathrm{with} ~ \omega_i=1 ~ \mathrm{such~that} ~ [v]\subseteq R(\omega).$$
Hence, $(e_i,[v])$ or $(e_i,\omega)$ is an exit for $\widetilde{\alpha}$, respectively. Consequently, in any case, $\widetilde{\alpha}$ has an exit.
\end{proof}

\begin{thm}[The Cuntz-Krieger Uniqueness Theorem]\label{thm4.8}
Suppose that $\gh$ is a quotient ultragraph satisfying Condition (L). If $\{T_e,Q_A\}$ is a Cuntz-Krieger representation for $\gh$ in which all the projection $Q_{[A]}$ are nonzero for $[A]\neq [\emptyset]$, then the $*$-homomorphism $\pi_{T,Q}:C^*(\gh)\rightarrow C^*(T_e,Q_{[A]})$ with $\pi_{T,Q}(t_e)=T_e$ and $\pi_{T,Q}(q_{[A]})=Q_{[A]}$ is an isometrically isomorphism.
\end{thm}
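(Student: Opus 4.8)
The plan is to mimic exactly the strategy used for the Gauge Invariant Uniqueness Theorem (Theorem \ref{thm4.5}), but replacing the appeal to the gauge invariant uniqueness theorem for graph $C^*$-algebras with the Cuntz-Krieger uniqueness theorem for graph $C^*$-algebras. First I would select an increasing sequence $\{F_n\}$ of finite subsets of $\sg\cup\pgl$ with $\bigcup_{n=1}^\infty F_n=\sg\cup\pgl$. For each $n$, Corollary \ref{cor4.3} provides an isomorphism $\pi_n\colon C^*(G_{F_n})\to C^*(\{t_e,q_{[v]}:[v]\in F_n^0,e\in F_n^1\})$ respecting the generators. Since $\gh$ satisfies Condition (L), Lemma \ref{lem4.7} tells us that each graph $G_{F_n}$ satisfies Condition (L) as well.

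Next I would consider the composition $\pi_{T,Q}\circ\pi_n\colon C^*(G_{F_n})\to C^*(T_e,Q_{[A]})$. To apply the Cuntz-Krieger uniqueness theorem for graph $C^*$-algebras, I need the images of the vertex projections $p_a$ of $G_{F_n}$ to be nonzero. By Proposition \ref{prop4.2} the vertex projections $Q_a$ of the Cuntz-Krieger $G_{F_n}$-family in $C^*(\gh)$ are built as $Q_e=t_et_e^*$, $Q_{[v]}=q_{[v]}(1-\sum_{e\in F_n^1}t_et_e^*)$, and $Q_\omega=q_{R(\omega)}(1-\sum_{e\in F_n^1}t_et_e^*)$; applying $\pi_{T,Q}$ replaces each $t_e$ by $T_e$ and each $q_{[A]}$ by $Q_{[A]}$, so the images are the analogous expressions in the $T_e$'s and $Q_{[A]}$'s. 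Repeating the nonvanishing argument of Proposition \ref{prop4.2} verbatim in $C^*(T_e,Q_{[A]})$ — using that the $Q_{[A]}$ are nonzero for $[A]\neq[\emptyset]$, that each $[v]\in F_n^0$ is singular, and that each $\omega\in\Gamma$ comes with either an edge $f\in\pgl\setminus F_n^1$ with $s(f)\subseteq R(\omega)$ or a sink inside $R(\omega)$ — shows $\pi_{T,Q}(Q_a)\neq 0$ for every vertex $a$ of $G_{F_n}$. Hence the Cuntz-Krieger uniqueness theorem for graphs applies and $\pi_{T,Q}\circ\pi_n$ is faithful, so $\pi_{T,Q}$ is isometric on the $*$-subalgebra generated by $\{t_e,q_{[v]}:[v]\in F_n^0,e\in F_n^1\}$.

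Finally, letting $n\to\infty$, $\pi_{T,Q}$ is isometric on the $*$-subalgebra $C^*(t_e,q_{[v]}:[v]\in\sg,e\in\pgl)$, which by the remark following Corollary \ref{cor4.3} is dense in $C^*(\gh)$. An isometry on a dense subalgebra extends to an isometry on the whole $C^*$-algebra, so $\pi_{T,Q}$ is isometric, hence injective; since it is surjective onto $C^*(T_e,Q_{[A]})$ by construction, it is an isometric isomorphism.

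I expect the main obstacle to be the nonvanishing of the graph vertex projections on the target side: the argument in Proposition \ref{prop4.2} that $Q_\omega\neq 0$ for $\omega\in\Gamma$ exploited concretely that we were sitting inside $C^*(\gh)$ where all $q_{[A]}$ with $[A]\neq[\emptyset]$ are nonzero and the $t_f t_f^*$ are genuine nonzero projections with the expected orthogonality. Transplanting this to $C^*(T_e,Q_{[A]})$ requires knowing that the image family $\{T_e,Q_{[A]}\}$ still satisfies enough of these structural facts — in particular that $Q_\omega' := Q_{R(\omega)}(1-\sum_{e\in F_n^1}T_eT_e^*)$ dominates a nonzero $T_fT_f^*$ or $Q_{[w]}$ — which follows because $\{T_e,Q_{[A]}\}$ is itself a representation of $\gh$ (so relations (1)--(4) of Definition \ref{defn3.8} hold) together with the hypothesis $Q_{[A]}\neq 0$ for $[A]\neq[\emptyset]$. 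Once this bookkeeping is in place the rest is a direct transcription of the proof of Theorem \ref{thm4.5}.
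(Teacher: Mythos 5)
Your proposal is correct and follows essentially the same route as the paper: an increasing exhaustion $\{F_n\}$, the isomorphisms of Corollary \ref{cor4.3}, Lemma \ref{lem4.7} to transfer Condition (L) to each $G_{F_n}$, the graph Cuntz--Krieger uniqueness theorem applied to $\pi_{T,Q}\circ\pi_n$, and density of $C^*(t_e,q_{[v]}:[v]\in\sg,\,e\in\pgl)$. Your explicit verification that the images of the $G_{F_n}$-vertex projections are nonzero in $C^*(T_e,Q_{[A]})$ (by rerunning the nonvanishing argument of Proposition \ref{prop4.2} for the target family) is a point the paper leaves implicit, and it is handled correctly.
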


\begin{proof}
It suffices to show that $\pi_{T,Q}$ is faithful. Similar to the proof of \ref{thm4.5}, choose an increasing sequence $\{F_n\}$ of finite sets such that $\cup_{n=1}^\infty F_n=\sg \cup \pgl$. By Corollary \ref{cor4.3}, there are isomorphisms $\pi_n: C^*(G_{F_n})\rightarrow C^*\left(\{t_e, q_{[v]}:[v]\in F_n^0,e\in F_n^1\}\right)$ that respect the generators. Since all the graphs $G_{F_n}$ satisfy Condition (L) by Lemma \ref{lem4.7}, the Cuntz-Krieger uniqueness theorem for graph $C^*$-algebras implies that the $*$-homomorphisms
$$\pi_{T,Q}\circ \pi_n: C^*(G_{F_n})\rightarrow C^*(T_e,Q_{[A]})$$
are faithful. Therefore, $\pi_{T,Q}$ is faithful on the subalgebra $C^*(t_e,q_{[v]}:[v]\in \sg, e\in \pgl)$ of $C^*(\gh)$. Since this subalgebra is dense in $C^*(\gh)$, we conclude that $\pi_{T,Q}$ is a faithful homomorphism.
\end{proof}


\section{Gauge Invariant Ideals}

Here we want to describe the gauge invariant ideals of a quotient ultragraph $C^*$-algebra $C^*(\gh)$ to apply in Sections 6 and 7. For this, we first prove that $C^*(\gh)$ is isomorphic to the quotient $C^*$-algebra $C^*(\g)/I_{(H,B)}$, and then use the ideal structure of $C^*(\g)$ \cite[Section 6]{kat3}.

\begin{prop}\label{prop5.1}
Let $\g$ be an ultragraph. If $(H,B)$ is an admissible pair in $\g$, then $C^*(\gh)\cong C^*(\g)/I_{(H,B)}$.
\end{prop}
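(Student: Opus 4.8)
The plan is to build explicit mutually inverse $*$-homomorphisms between $C^*(\gh)$ and $C^*(\g)/I_{(H,B)}$, using Proposition \ref{prop3.2} to replace $\g$ by $\overline{\g}$ throughout (so that, by the isomorphism there, $I_{(H,B)}$ may be regarded as an ideal of $C^*(\overline{\g})$ and $C^*(\g)/I_{(H,B)}\cong C^*(\overline{\g})/I_{(H,B)}$). In one direction, the representation $\{T_e,Q_{[A]}\}$ of $\gh$ in $C^*(\overline{\g})/I_{(H,B)}$ already constructed in the proof of Theorem \ref{thm3.11}, namely $Q_{[A]}=p_A+I_{(H,B)}$ and $T_e=s_e+I_{(H,B)}$, induces by universality a $*$-homomorphism $\pi\colon C^*(\gh)\to C^*(\overline{\g})/I_{(H,B)}$ sending $q_{[A]}\mapsto p_A+I_{(H,B)}$, $t_e\mapsto s_e+I_{(H,B)}$. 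Since the generators $p_A$ and $s_e$ of $C^*(\overline{\g})$ all appear (for $e\in\pgl$ and $[A]\neq[\emptyset]$ together with those $p_A$ with $A\in H$, which map to $0$), the image of $\pi$ is all of $C^*(\overline{\g})/I_{(H,B)}$, so $\pi$ is surjective.

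The main work is injectivity of $\pi$, and here I would invoke the gauge invariant uniqueness theorem for quotient ultragraphs, Theorem \ref{thm4.5}. Two things must be checked. First, $Q_{[A]}=p_A+I_{(H,B)}\neq 0$ for $[A]\neq[\emptyset]$: this was already observed in the proof of Theorem \ref{thm3.11}, since $p_A\in I_{(H,B)}$ forces $A\in H$ by \cite[Theorem 6.12]{kat3}, i.e. $[A]=[\emptyset]$. Second, there must be a strongly continuous $\mathbb{T}$-action $\beta$ on $C^*(\overline{\g})/I_{(H,B)}$ intertwining $\pi$ with the gauge action $\gamma$ on $C^*(\gh)$; this is immediate because $I_{(H,B)}$ is gauge invariant in $C^*(\overline{\g})$, so the gauge action on $C^*(\overline{\g})$ descends to the quotient, and on generators it satisfies $\beta_z(p_A+I_{(H,B)})=p_A+I_{(H,B)}$ and $\beta_z(s_e+I_{(H,B)})=z(s_e+I_{(H,B)})$, which is exactly $\beta_z\circ\pi=\pi\circ\gamma_z$. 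Theorem \ref{thm4.5} then gives that $\pi$ is faithful, hence an isomorphism.

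Alternatively, and perhaps cleaner to write, one can exhibit the inverse directly: define a Cuntz-Krieger $\overline{\g}$-family in $C^*(\gh)$ by $P_A:=q_{[A]}$ for $A\notin H$ (and $P_A:=0$ for $A\in H$, extended over the algebra $\overline{\g}^0$ via Remark \ref{rem3.2}) and $S_e:=t_e$ for $e\in\pgl$, $S_e:=0$ for $e\in\overline{\g}^1\setminus\pgl$ (i.e. $r'(e)\in H$); one checks relations (1)--(4) of Definition \ref{defn2.3} hold — relation (4) at a vertex $v$ with $0<|s^{-1}_{\overline\g}(v)|<\infty$ splits into the case $[v]=[\emptyset]$, trivial, and $[v]\neq[\emptyset]$, where it follows from relation (4) in Definition \ref{defn3.8} after discarding the edges with range in $H$. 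The universal property of $C^*(\overline{\g})$ yields $\rho\colon C^*(\overline{\g})\to C^*(\gh)$, and since $\rho$ kills each $p_A$ with $A\in H$ and each $p_w^H$ with $w\in B$ (the latter because $p_w^H = p_w - \sum_{s'(e)=w, r'(e)\notin H} s_e s_e^*$ maps to $q_{[w]}-\sum_{s(e)=[w]}t_e t_e^* = 0$ as $[w]$ is a breaking vertex, hence regular, in $\gh$ — wait, rather: $w\in B$ means $w$ is \emph{not} a vertex of $\gh$ and its defining relation collapses), $\rho$ factors through $C^*(\overline{\g})/I_{(H,B)}$. One then verifies $\rho$ and $\pi$ are mutually inverse on generators.

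\textbf{Main obstacle.} The delicate point is the bookkeeping around breaking vertices: making sure that the passage to $\overline{\g}$ has correctly separated the vertices $w'$ (whose edges all land in $H$ and which survive in $\gh$ as genuine vertices) from the vertices $w\in B$ (which are absorbed into $I_{(H,B)}$), and that relation (4) of the $\overline{\g}$-family — or of the $\gh$-representation — is not violated at these vertices. In the direct-inverse approach this is exactly the computation that $S_e$, $P_A$ satisfy (4); in the uniqueness-theorem approach it is hidden inside the already-proved Theorem \ref{thm3.11}, so that route is shorter, and I would take it, relegating the direct construction of $\rho$ to a remark if at all. I expect the whole proof to be about a page, with the only real content being the verification that $\{T_e, Q_{[A]}\}$ as above is a representation of $\gh$ — which, again, was essentially done in Theorem \ref{thm3.11} — plus the two hypotheses of Theorem \ref{thm4.5}.
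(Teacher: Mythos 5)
Your primary route is exactly the paper's proof: pass to $\overline{\g}$ via Proposition \ref{prop3.2}, use the representation $Q_{[A]}=p_A+I_{(H,B)}$, $T_e=s_e+I_{(H,B)}$ from the proof of Theorem \ref{thm3.11} to get $\phi\colon C^*(\gh)\to C^*(\overline{\g})/I_{(H,B)}$, check the $Q_{[A]}$ are nonzero for $[A]\neq[\emptyset]$, apply the gauge invariant uniqueness theorem for injectivity, and note surjectivity from the generators. Your explicit verification of the descended gauge action is a hypothesis the paper leaves implicit, but the argument is the same; the alternative direct-inverse construction is not needed.
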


\begin{proof}
Using Proposition \ref{prop3.2}, we can consider $I_{(H,B)}$ as an ideal of $C^*(\overline{\g})$. Suppose that $C^*(\overline{\g})=C^*(s_e,p_A)$ and $C^*(\gh)=C^*(t_e,q_{[A]})$. If we define
$$T_e:=s_e+I_{(H,B)} ~~~ \mathrm{and} ~~~ Q_{[A]}:=p_A+I_{(H,B)}$$
for every $[A]\in \pgo$ and $e\in \pgl$, then the family $\{T_e,Q_{[A]}\}$ is a representation for $\gh$ in $C^*(\overline{\g})/I_{(H,B)}$. So, there is a $*$-homomorphism $\phi:C^*(\gh)\rightarrow C^*(\g)/I_{(H,B)}$ such that $\phi(t_e)=T_e$ and $\phi(q_{[A]})=Q_{[A]}$. Moreover, all $Q_{[A]}$ with $[A]\neq [\emptyset]$ are nonzero because $p_A+I_{(H,B)}=I_{(H,B)}$ implies $A\in H$. Then, an application of Theorem \ref{thm4.5} yields that $\phi$ is faithful. On the other hand, the family $\{T_e,Q_{[A]}:[A]\in\pgo,e\in\pgl\}$ generates the quotient $C^*(\g)/I_{(H,B)}$, and hence, $\phi$ is surjective as well. Therefore, $\phi$ is an isomorphism and the result follows.
\end{proof}

In the rest of section, we fix a quotient ultragraph $\gh$ and assume that $C^*(\g)=C^*(s_e,p_A)$ and $C^*(\gh)=C^*(t_e,q_{[A]})$. If we order the admissible pairs in $\g$ by
$$(K_1,S_1)\preceq (K_2,S_2) ~~ \Longleftrightarrow ~~  K_1\subseteq K_2 ~~ \mathrm{and} ~~ S_1\subseteq K_2 \cup S_2,$$
then \cite[Theorem 6.12]{kat3} shows that the map $(K,S) \mapsto I_{(K,S)}$ is a one-to-one order preserving correspondence between admissible pairs in $\g$ and gauge invariant ideals of $C^*(\g)$. Moreover, for every gauge invariant ideal $I_{(K,S)}$ in $C^*(\g)$, we have
$\{A\in \go: p_A\in I_{(K,S)}\}=K$ and $\{w\in B_K: p_w^K\in I_{(K,S)}\}=S$.

\begin{lem}\label{lem5.2}
Let $\phi: C^*(\g)\rightarrow C^*(\g/(H,B))$ be the canonical surjection described in Proposition \ref{prop5.1}, and let $(K,S)$ be an admissible pair in $\g$. Then the image of ideal $I_{(K,S)}$ of $C^*(\g)$ under $\phi$ is the ideal $J_{[K,S]}$ in $C^*(\g/(H,B))$ generated by
\begin{multline*}
\left\{q_{[\overline{A}]},q_{[w']}: A\in K, w\in S\cap(B_H\setminus B)\right\}\\
\cup\left\{q_{[w]}-\sum_{s_\g(e)=w, r_\g(e)\notin K} t_et_e^* :w\in S\setminus (B_H\setminus B)\right\}.
\end{multline*}
\end{lem}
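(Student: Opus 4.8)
The plan is to chase the generators of $I_{(K,S)}$ through the surjection $\phi$ and identify the resulting ideal with $J_{[K,S]}$ by a double inclusion. Recall from Section 2 that $I_{(K,S)}$ is generated in $C^*(\g)$ (equivalently in $C^*(\overline{\g})$, via Proposition \ref{prop3.2}) by the set $\{p_A : A\in K\} \cup \{p_w^K : w\in S\}$, where $p_w^K = p_w - \sum_{s_\g(e)=w,\, r_\g(e)\notin K} s_e s_e^*$. Since $\phi$ is a surjective $*$-homomorphism, $\phi(I_{(K,S)})$ is an ideal of $C^*(\gh)$, and it is generated by the images of these generators. So the first step is simply to compute $\phi(p_A)$ and $\phi(p_w^K)$ using the formulas $\phi(s_e) = t_e$ (for $e\in\pgl$) and $\phi(p_A) = q_{[\overline{A}]}$ from Proposition \ref{prop5.1} together with the extension-of-family bookkeeping from Proposition \ref{prop3.2}.

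The main subtlety — and what I expect to be the principal obstacle — is keeping straight the three-way interplay among the vertex types in $\overline{\g}$: ordinary vertices, breaking vertices $w\in B_H\setminus B$ (which were split into $w$ and $w'$), and the edges that got their source moved to $w'$. Concretely, when $A\in K$, one has $\phi(p_A)=q_{[\overline{A}]}$ directly, giving the first family of generators; but $\overline{A}=A\cup\{w':w\in A\cap(B_H\setminus B)\}$, so if $A$ contains a breaking vertex $w$, then $q_{[\overline A]}$ already dominates $q_{[w']}$, and separately one must check that $q_{[w']}$ itself lies in $\phi(I_{(K,S)})$ exactly when $w\in S$. Here I would use: $p_{w'}$ in $C^*(\overline{\g})$ equals $p_w - \sum_{s_\g(e)=w,\,r_\g(e)\notin H}s_es_e^* = p_w^H$ by the construction in Proposition \ref{prop3.2}, so $\phi(p_{w'}) = \phi(p_w^H)$; and for $w\in S\cap(B_H\setminus B)$ one relates $p_w^H$ and $p_w^K$ using that $K\supseteq H$ (since $(H,B)\preceq(K,S)$), so $p_w^H - p_w^K = \sum_{s_\g(e)=w,\, r_\g(e)\in K\setminus H} s_es_e^*$, and each such $s_es_e^*$ has $r_\g(e)\in K$, hence $s_es_e^* = s_e p_{r_\g(e)} s_e^* \in I_{(K,S)}$. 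Thus $\phi(p_w^K)$ and $\phi(p_{w'})$ differ by an element of $\phi(I_{(K,S)})$, which pins down $q_{[w']}$ as a generator precisely for $w\in S\cap(B_H\setminus B)$.

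For a breaking vertex $w\in S$ that is \emph{not} in $B_H\setminus B$ (i.e.\ $w\in S\cap B$), the vertex $w$ was not split, so $\phi(p_w)=q_{[w]}$ and $\phi(p_w^K) = q_{[w]} - \sum_{s_\g(e)=w,\, r_\g(e)\notin K}\phi(s_e s_e^*)$; here one checks that $r_\g(e)\notin K$ forces $e\in\pgl$ (i.e.\ $r'(e)\notin H$, which holds since $H\subseteq K$), so $\phi(s_e)=t_e$ and the image is exactly $q_{[w]} - \sum_{s_\g(e)=w,\,r_\g(e)\notin K} t_e t_e^*$ — the second displayed family. One must also dispose of the edges $e$ with $s_\g(e)=w$ and $r_\g(e)\in K$: these contribute $\phi(s_es_e^*) = t_e q_{r(e)} t_e^*$, and since $r_\g(e)\in K$ one has $q_{[\overline{r_\g(e)}]}$ already among the first family of generators, so these terms lie in $J_{[K,S]}$ and can be absorbed. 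Assembling these computations gives $\phi(I_{(K,S)}) \subseteq J_{[K,S]}$ and, reading each step backwards, $J_{[K,S]}\subseteq \phi(I_{(K,S)})$, whence equality. I would organize the writeup as: (1) recall generators of $I_{(K,S)}$ and note $\phi(I_{(K,S)})$ is generated by their images; (2) handle $A\in K$, splitting into the cases $A$ contains a breaking vertex or not; (3) handle $w\in S$, splitting into $w\in B_H\setminus B$ versus $w\in B$, using the $p_w^H$ vs.\ $p_w^K$ comparison and $H\subseteq K$ throughout; (4) conclude by double inclusion.
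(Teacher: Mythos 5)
Your proposal is correct and follows essentially the same route as the paper: push the generators $\{p_A : A\in K\}\cup\{p_w^K : w\in S\}$ of $I_{(K,S)}$ through $\phi$ using Propositions \ref{prop3.2} and \ref{prop5.1} and identify the images with the listed generators of $J_{[K,S]}$. In fact you supply details the paper's three-line proof leaves implicit — notably the comparison $p_w^K - p_w^H = \sum_{s_\g(e)=w,\, r_\g(e)\in K\setminus H} s_e s_e^* \in I_{(K,S)}$ explaining why $q_{[w']}=\phi(p_w^H)$ may replace $\phi(p_w^K)$ for $w\in S\cap(B_H\setminus B)$ — so no further changes are needed.
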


\begin{proof}
Using the isomorphisms in Propositions \ref{prop3.2} and \ref{prop5.1}, we have $\phi(p_A)=q_{[\overline{A}]}$ and $\phi(p_w^H)=q_{[w']}$ for every $A\in K$ and $w\in S\cap (B_H\setminus B)$. Also, if $w\in S\setminus (B_H\setminus B)$, then
$$\phi(p_w-\sum_{s_\g(e)=w, r_\g(e)\notin K} s_es_e^*)=q_{[w]}-\sum_{s_\g(e)=w, r_\g(e)\notin K} t_et_e^*.$$
These follow the result.
\end{proof}

\begin{thm}\label{thm5.3}
Let $\gh$ be a quotient ultragraph of $\g$. Then
\begin{enumerate}
  \item the map $(K,S) \mapsto J_{[K,S]}$ is a one-to-one order preserving correspondence between the admissible pairs $(K,S)$ in $\g$ with $H\subseteq K$, $B\subseteq K\cup S$ and the gauge invariant ideals of $C^*(\gh)$;
  \item if $J_{[K,S]}$ is a gauge invariant ideal in $C^*(\gh)$, then
$$\frac{C^*(\gh)}{J_{[K,S]}}\cong C^*(\g/(K,S)).$$
\end{enumerate}
\end{thm}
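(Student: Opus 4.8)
The plan is to reduce both statements to facts already established in the excerpt about the ultragraph $C^*$-algebra $C^*(\g)$ together with Proposition~\ref{prop5.1}, which identifies $C^*(\gh)$ with $C^*(\g)/I_{(H,B)}$. The key observation is that, under the isomorphism $C^*(\gh)\cong C^*(\g)/I_{(H,B)}$, the gauge action $\gamma$ on $C^*(\gh)$ corresponds to the gauge action on the quotient $C^*(\g)/I_{(H,B)}$ induced from that of $C^*(\g)$ (since $I_{(H,B)}$ is gauge invariant). Hence the gauge invariant ideals of $C^*(\gh)$ are in bijective order-preserving correspondence with the gauge invariant ideals of $C^*(\g)$ that contain $I_{(H,B)}$. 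By \cite[Theorem 6.12]{kat3}, every gauge invariant ideal of $C^*(\g)$ is of the form $I_{(K,S)}$ for a unique admissible pair $(K,S)$, and containment $I_{(H,B)}\subseteq I_{(K,S)}$ translates, via the order-isomorphism $(K,S)\mapsto I_{(K,S)}$ recalled just before Lemma~\ref{lem5.2}, into the condition $(H,B)\preceq(K,S)$, i.e. $H\subseteq K$ and $B\subseteq K\cup S$.

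For part~(1), I would therefore argue as follows. Let $q\colon C^*(\g)\to C^*(\g)/I_{(H,B)}\cong C^*(\gh)$ be the quotient map. A standard fact (the lattice isomorphism theorem for $C^*$-algebras) gives an order isomorphism between ideals of $C^*(\gh)$ and ideals of $C^*(\g)$ containing $I_{(H,B)}$, via $J\mapsto q^{-1}(J)$ with inverse $I\mapsto q(I)$; this restricts to an order isomorphism on gauge invariant ideals because $q$ intertwines the gauge actions. Composing with the correspondence $(K,S)\mapsto I_{(K,S)}$ of \cite[Theorem 6.12]{kat3}, restricted to pairs with $(H,B)\preceq(K,S)$, we obtain that $(K,S)\mapsto q(I_{(K,S)})$ is a one-to-one order-preserving correspondence onto the gauge invariant ideals of $C^*(\gh)$. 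Finally, Lemma~\ref{lem5.2} identifies $q(I_{(K,S)})$ with the ideal $J_{[K,S]}$ described by the listed generators, which completes part~(1). One should check that distinct admissible pairs $(K,S)$ with $(H,B)\preceq(K,S)$ give distinct ideals $J_{[K,S]}$; this is immediate since $q$ is injective on ideals containing $I_{(H,B)}$ and the $I_{(K,S)}$ are already distinct.

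For part~(2), I would use the elementary isomorphism $(A/I)\big/(J/I)\cong A/J$ for a $C^*$-algebra $A$ and nested ideals $I\subseteq J$. Taking $A=C^*(\g)$, $I=I_{(H,B)}$ and $J=I_{(K,S)}$ (legitimate since $(H,B)\preceq(K,S)$ forces $I_{(H,B)}\subseteq I_{(K,S)}$), and using Proposition~\ref{prop5.1} twice — once to rewrite $A/I$ as $C^*(\gh)$ and once more, now applied to the admissible pair $(K,S)$ in $\g$, to rewrite $A/J=C^*(\g)/I_{(K,S)}$ as $C^*(\g/(K,S))$ — we get
$$\frac{C^*(\gh)}{J_{[K,S]}}\;\cong\;\frac{C^*(\g)/I_{(H,B)}}{I_{(K,S)}/I_{(H,B)}}\;\cong\;\frac{C^*(\g)}{I_{(K,S)}}\;\cong\;C^*(\g/(K,S)),$$
where the first isomorphism uses that $J_{[K,S]}=q(I_{(K,S)})$ corresponds exactly to $I_{(K,S)}/I_{(H,B)}$ under $C^*(\gh)\cong C^*(\g)/I_{(H,B)}$.

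The main obstacle I anticipate is not conceptual but bookkeeping: verifying that the ideal $q(I_{(K,S)})$ really coincides with the concretely generated ideal $J_{[K,S]}$ of Lemma~\ref{lem5.2}, and in particular that the generators listed there lie in $q(I_{(K,S)})$ \emph{and} generate all of it. The subtlety is that $I_{(K,S)}$ is generated by $\{p_A:A\in K\}\cup\{p_w^K:w\in B_K\cap S\}$ in $C^*(\g)$, but after passing to $\overline{\g}$ and then to the quotient by $I_{(H,B)}$, the breaking-vertex projections $p_w^K$ split into cases according to whether $w\in B_H\setminus B$ or not — exactly the trichotomy recorded in Lemma~\ref{lem5.2}. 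Making sure this case analysis is exhaustive and that nothing is lost when $w$ is a breaking vertex for $H$ but the edges it emits have ranges landing partly in $K$ is the delicate point; fortunately Lemma~\ref{lem5.2} has already done this computation, so here I would simply invoke it.
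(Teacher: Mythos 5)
Your proposal is correct and follows essentially the same route as the paper: pull gauge invariant ideals of $C^*(\gh)$ back along the canonical surjection to gauge invariant ideals of $C^*(\g)$ containing $I_{(H,B)}$, invoke the classification $I=I_{(K,S)}$ from \cite[Theorem 6.12]{kat3} together with Lemma~\ref{lem5.2} to identify the image with $J_{[K,S]}$, and then derive part~(2) from the third isomorphism theorem combined with two applications of Proposition~\ref{prop5.1}. The only difference is that you spell out the lattice-isomorphism and order-preservation bookkeeping that the paper leaves implicit.
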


\begin{proof}
(1): Suppose that $J$ is a gauge invariant ideal in $C^*(\gh)$. If $\phi: C^*(\g)\rightarrow C^*(\gh)$ is the canonical map of Lemma \ref{lem5.2} and we define $I:=\phi^{-1}(J)$, then $I$ is a gauge invariant ideal of $C^*(\g)$ with $I_{(H,B)}\subseteq I$. By \cite[Theorem 6.12]{kat3}, there exists an admissible pair $(K,S)$ in $\g$ such that $I=I_{(K,S)}$. In particular, we have $H\subseteq K$ and $B\subseteq K\cup S$. Now Lemma \ref{lem5.2} implies that $J=\phi(I_{(K,S)})=J_{[K,S]}$, as demanded.

(2): Let $J_{[K,S]}$ be a gauge invariant ideal in $C^*(\gh)$. Since $\phi(I_{(K,S)})=J_{[K,S]}$ by Lemma \ref{lem5.2}, Proposition \ref{prop5.1} yields that
$$\frac{C^*(\gh)}{J_{[K,S]}}\cong \frac{C^*(\g)/I_{(H,B)}}{I_{(K,S)}/I_{(H,B)}}\cong \frac{C^*(\g)}{I_{(K,S)}}\cong C^*(\g/(K,S)).$$
\end{proof}

\section{Condition (K)}

It is known that a graph $E$ satisfies Condition (K) if and only if every ideal in $C^*(E)$ is gauge invariant (see \cite{bat,dri} among others). In \cite[Section 7]{kat3}, the authors proved a similar result for ultragraph $C^*$-algebras by applying the $C^*$-algebras of topological graphs. In this section, we extend Condition (K) for a quotient ultragraph $\gh$ under which all ideals of $C^*(\gh)$ are gauge invariant. In particular, we can alternatively obtain \cite[Proposition 7.3]{kat3} by applying the quotient ultragraphs. To do this, we first see that a quotient ultragraph $\gh$ satisfies Condition (K) if and only if every quotient ultragraph $\g/(K,S)$ with $H\subseteq K$ satisfies Condition (L). Then the main results will be shown.

Let $\alpha=e_1\ldots e_n$ be a path in an ultragraph $\g$. For $1\leq k\leq l\leq n$, the path $\beta=e_k e_{k+1}\ldots e_l$ is called a \emph{subpath of $\alpha$}. We simply write $\beta\subseteq \alpha$ when $\beta$ is a subpath of $\alpha$; otherwise, we write $\beta\nsubseteq \alpha$.

\begin{defn}
We say that a quotient ultragraph $\gh$ of $\g$ satisfies \emph{Condition (K)} if every vertex $v\in G^0\setminus H$ either is the base of no loops, or there are at least two loops $\alpha,\beta$ in $\g$ based at $v$ such that neither $\alpha$ nor $\beta$ is a subpath of the other.
\end{defn}

\begin{rem}
If $\g/(\emptyset,\emptyset)$ is an ultragraph, the above definition for Condition (K) coincide with that of \cite[Definition 7.1]{kat3}.
\end{rem}

In the following, we show in the absence of Condition (K) for $\gh$ that there is a quotient ultragraph $\g/(K,S)$ with $K\supseteq H$ such that it does not satisfy Condition (L). To d this, let $\gh$ be a quotient ultragraph not satisfying Condition (K). Then $\gh$ contains a loop $\gamma=e_1\ldots e_n$ such that there are no loops $\alpha$ with $s(\alpha)=s(\gamma)$, $\alpha\nsubseteq \gamma$, and $\gamma\nsubseteq\alpha$. If $\gamma^0:=\{s_\g(e_1),\ldots, s_\g(e_n)\}$, define
$$X:=\left\{r_\g(\alpha)\setminus \gamma^0: \alpha\in \g^*,|\alpha|\geq 1, s_\g(\alpha)\in \gamma^0\right\},$$
$$Y:=\left\{\bigcup_{i=1}^n A_i: A_1,\ldots,A_n\in X \cup H, n\in \mathbb{N}\right\},$$
and set
$$K_0:=\left\{B\in \go:B\subseteq A ~ \mathrm{for ~ some} ~ A\in Y\right\}.$$
We construct a specific saturated hereditary subset of $\go$ containing $H$ as follows: for any $n\in \mathbb{N}$ inductively define
\begin{multline*}
S_n:=\left\{w\in G^0: 0<|s_\g^{-1}(w)|<\infty ~~ \mathrm{and} ~~ r_\g(s_\g^{-1}(w))\subseteq K_{n-1}\right\}\\
\cup \left\{w\in B: w\notin K_{n-1} ~~\mathrm{and} ~~ r_\g(s_\g^{-1}(w))\subseteq K_{n-1}\right\}
\end{multline*}
and
$$K_n:=\left\{A\cup F: A\in K_{n-1} ~~ \mathrm{and} ~~ F\subseteq S_n ~~\mathrm{is ~ a ~ finite ~ subset} \right\}.$$
Then we can easily show that the set
$$K=\bigcup_{n=0}^\infty K_n=\left\{A\cup F: A\in K_0 ~~ \mathrm{and} ~~ F\subseteq \bigcup_{n=1}^\infty S_n ~~ \mathrm{is~a ~ finite ~ subset}\right\}$$
is hereditary and saturated in $\g$.

\begin{lem}\label{lem6.3}
Suppose that $\gh$ is a quotient ultragraph of $\g$, and $\gamma=e_1\ldots e_n$ is a loop in $\gh$ such that there are no loops $\alpha$ with $s(\alpha)=s(\gamma)$ and $\alpha\nsubseteq \gamma$, $\gamma\nsubseteq\alpha$. If we construct the set $K$ as above, then $K$ is a saturated hereditary subset of $\go$ such that $H\subseteq K$ and $B\subseteq K\cup B_K$. Moreover, we have $A\cap \gamma^0=\emptyset$ for every $A\in K$.
\end{lem}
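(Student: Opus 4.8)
The plan is to verify the three assertions in order: first that $H\subseteq K$, then that $A\cap\gamma^0=\emptyset$ for every $A\in K$, then that $K$ is saturated and hereditary, and finally that $B\subseteq K\cup B_K$. The containment $H\subseteq K$ is immediate: $H\subseteq K_0$ because every $V\in H$ lies in $Y$ (take the trivial union of one set from $H$), and $K_0\subseteq K$ by construction. The heart of the argument, and the step I expect to be the main obstacle, is the last claim $A\cap\gamma^0=\emptyset$ for all $A\in K$. Here one must exploit the hypothesis that $\gamma$ is the ``only'' loop at $s(\gamma)$ up to subpath inclusion. The key observation is that if some vertex $u\in\gamma^0$ were reachable from $\gamma^0$ by a path $\alpha$ with $r_\g(\alpha)\ni u$, then by composing $\alpha$ with the appropriate segment of $\gamma$ one would manufacture a loop at $s(\gamma)$ that is neither a subpath of $\gamma$ nor contains $\gamma$ --- unless $\alpha$ itself is already a segment of (a power of) $\gamma$. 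A careful case analysis along these lines shows that no element of $X$ meets $\gamma^0$, hence no element of $Y$ does (since $H\cap\gamma^0=\emptyset$, which is part of being a valid quotient ultragraph loop --- vertices of $\gamma^0$ lie outside $H$), hence no element of $K_0$ does, and finally no element of any $K_n$ does because the ``finite subset $F\subseteq\bigcup S_n$'' part only adjoins single vertices $w$ with $0<|s_\g^{-1}(w)|<\infty$ whose ranges lie in $K_{n-1}$ --- such a $w$ cannot lie in $\gamma^0$ since each vertex of $\gamma^0$ emits the edge $e_i$ whose range is not (by induction) contained in $K_{n-1}$.

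Next I would check that $K$ is hereditary and saturated. Heredity of $K_0$: conditions (2) and (3) of Definition~\ref{defn2.5} are built into the definition of $Y$ and the passage to subsets, while condition (1) --- $s_\g(e)\in K_0\Rightarrow r_\g(e)\in K_0$ --- follows because if $B\subseteq\bigcup A_i$ with $A_i\in X\cup H$ and $B\ni s_\g(e)$, then appending $e$ to the path witnessing membership of $s_\g(e)$ shows $r_\g(e)\in X\cup H\subseteq Y$. Heredity is then preserved at each stage $K_n$ since we only adjoin finite sets of vertices, and one checks directly (this is the routine induction already signalled by ``we can easily show'' in the text) that the nested union $K=\bigcup K_n$ is hereditary. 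Saturation is exactly what the inductive definition of the $S_n$ is designed to force: if $v\in G^0$ has $0<|s_\g^{-1}(v)|<\infty$ and $r_\g(s_\g^{-1}(v))\subseteq K$, then by finiteness $r_\g(s_\g^{-1}(v))\subseteq K_{n-1}$ for some $n$, whence $v\in S_n\subseteq K_n\subseteq K$.

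Finally, for $B\subseteq K\cup B_K$, fix $w\in B$. If $w\in K$ we are done, so assume $w\notin K$. Since $w\in B\subseteq B_H$ we have $|s_\g^{-1}(w)|=\infty$ but only finitely many edges out of $w$ have range outside $H$; since $H\subseteq K$, also only finitely many edges out of $w$ have range outside $K$, and there is at least one (else $w$ would have been absorbed into some $S_n$ via the second clause defining $S_n$, contradicting $w\notin K$). Thus $0<|r_\g(s_\g^{-1}(w))\cap(\go\setminus K)|<\infty$, which is precisely the condition $w\in B_K$. This completes the proof.
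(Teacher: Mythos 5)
Your proposal is correct and follows essentially the same route as the paper's proof: the same induction over the sets $K_n$, the same use of the loop-uniqueness hypothesis (composing a path leaving $\gamma^0$ with a segment of $\gamma$ to manufacture a forbidden loop), and the same verifications of saturation, of $S_n\cap\gamma^0=\emptyset$, and of $B\subseteq K\cup B_K$ (where you are in fact slightly more explicit than the paper about the finiteness condition for $B_K$). The one organizational slip is that disjointness of elements of $X$ from $\gamma^0$ needs no case analysis — it is immediate from the definition $X=\{r_\g(\alpha)\setminus\gamma^0:\dots\}$ — and your loop-composition observation is really needed in the heredity step for $K_0$, to see that $r_\g(\alpha e)\cap\gamma^0=\emptyset$ and hence that $r_\g(e)=r_\g(\alpha e)\setminus\gamma^0$ itself lies in $X$, which is exactly where the paper deploys it.
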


\begin{proof}
First, we show inductively that every $K_n$ is a hereditary subset of $\go$ by checking the conditions of Definition \ref{defn2.5}. To verify condition (1) for $K_0$, let us take $e\in \gl$ with $s_\g(e)\in K_0$. Then $s_\g(e)\in H\cup X$. If $s_\g(e)\in H$, we have $r_\g(e)\in H\subseteq K_0$ by the hereditary property of $H$. If $s_\g(e)\in X$, there is $\alpha\in \g^*$ such that $s_\g(\alpha)\in \gamma^0$ and $s_\g(e)\in r_\g(\alpha)\setminus \gamma^0$. Hence, $s_\g(\alpha e)=s_\g(\alpha)\in \gamma^0$. Also, $r_\g(\alpha e)\cap \gamma^0=\emptyset$ because the otherwise implies the existence of a path $\beta\in \g^*$ with $s_\g(\beta)=s_\g(\gamma)$ and $\beta\nsubseteq \gamma$, $\gamma\nsubseteq \beta$ that contradicts the hypothesis. It turns out
$$r_\g(e)=r_\g(\alpha e)=r_\g(\alpha e)\setminus \gamma^0 \in X\subseteq K_0.$$
Hence, $K_0$ satisfies condition (1) in Definition \ref{defn2.5}. We may easily verify conditions (2) and (3) for $K_0$, so $K_0$ is hereditary. Moreover, for every $w\in S_n$, the range of each edge emitted by $w$ belongs to $K_{n-1}$ by definition. Thus, we can inductively check that each $K_n$ is hereditary, and so is $K=\cup_{n=1}^\infty K_n$. The saturation property of $K$ is similar to the proof of \cite[Lemma 3.12]{tom2}, and it is omitted.

Furthermore, it is clear that $H\subseteq K$ by the fact $H\subseteq Y$. To see $B\subseteq K\cup B_K$, take an arbitrary vertex $w\in B\setminus K$. Then $w$ is an infinite emitter and since $w\notin \cup_{n=1}^\infty S_n$, we have $r_\g(s_\g^{-1}(w))\nsubseteq \cup_{n=1}^\infty K_n=K$. Thus $w$ emits some edges into $\go\setminus K$ which implies $w\in B_K$ as desired.

It remains to show $A\cap \gamma^0=\emptyset$ for every $A\in K$. To do this, note that $A\cap \gamma^0=\emptyset$ for every $A\in K_0$ because this property holds for all $A\in H$ and $A\in X$. We claim that $(\cup_{n=1}^\infty S_n)\cap \gamma^0=\emptyset$. Indeed, if $v=s_\g(e_i)\in \gamma^0$ for some $e_i\in \gamma$, then $r_\g(e_i)\cap \gamma^0\neq \emptyset$ and $r_\g(e_i)\notin K_0$. Hence, $\{r_\g(e):e\in \gl,~s_\g(e)=v\}\nsubseteq K_0$ that turns out $v\notin S_1$. So, we have $S_1\cap \gamma^0=\emptyset$. An inductive argument shows $S_n\cap \gamma^0=\emptyset$ for $n\geq 1$, and the claim holds. Now since
$$K=\cup_{n=1}^\infty K_n=\left\{A\cup F: A\in K_0 ~~\mathrm{and} ~~ F\subseteq \cup_{n=1}^\infty S_n ~~ \mathrm{is~ a ~ finite ~ subset}\right\},$$
we conclude that $A\cap \gamma^0=\emptyset$ for all $A\in K$.
\end{proof}

\begin{prop}\label{prop6.4}
A quotient ultragraph $\gh$ satisfies Condition (K) if and only if for every admissible pair $(K,S)$ in $\g$ with $H\subseteq K$ and $B\subseteq K\cup S$, the quotient ultragraph $\g/(K,S)$ satisfies Condition (L).
\end{prop}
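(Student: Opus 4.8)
The plan is to prove the two directions separately, using the construction of the saturated hereditary set $K$ built before Lemma~\ref{lem6.3} together with the conclusions of that lemma. For the forward direction, suppose $\gh$ satisfies Condition (K) and fix an admissible pair $(K,S)$ with $H\subseteq K$ and $B\subseteq K\cup S$; I want to show $\g/(K,S)$ satisfies Condition (L). Let $\widetilde\gamma = f_1\ldots f_m$ be a loop in $\g/(K,S)$. Pulling back through the identifications, this loop corresponds to a loop in $\overline{\g}$ (equivalently a loop $\gamma$ in $\g$ whose edges all have range outside $K$, hence outside $H$). Since $\gh$ satisfies Condition (K), the vertex $s(\gamma)$ in $\gh$ either is based on no loop — impossible here — or there exist two loops $\alpha,\beta$ based at $s(\gamma)$ with $\alpha\nsubseteq\beta$ and $\beta\nsubseteq\alpha$. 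At least one of these, say $\alpha$, satisfies $\alpha\nsubseteq\gamma$ and $\gamma\nsubseteq\alpha$. One then uses $\alpha$ to manufacture an exit for $\widetilde\gamma$ in $\g/(K,S)$: the first edge of $\alpha$ that ``departs from'' $\gamma$ gives an edge $g$ with $s(g)\subseteq r(e_i)$ for some $i$, $g\neq e_{i+1}$, and — crucially — $r(g)\notin K$ so that $g$ survives in $\g/(K,S)$ (here one has to check that the hereditary set $K$ does not swallow the relevant range; because $\alpha$ is a genuine loop at $s(\gamma)$, its ranges meet $\gamma^0$, and Lemma~\ref{lem6.3} says ranges inside $K$ are disjoint from $\gamma^0$). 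If instead every edge of $\alpha$ that could serve as an exit has been killed in $\g/(K,S)$, a short combinatorial argument forces $\alpha\subseteq\gamma$ or $\gamma\subseteq\alpha$, a contradiction. Thus Condition (L) holds for $\g/(K,S)$; the special case $(K,S)=(H,B)$ covers $\gh$ itself.

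For the converse, I argue by contraposition: assume $\gh$ does \emph{not} satisfy Condition (K) and produce an admissible pair $(K,S)$ with $H\subseteq K$, $B\subseteq K\cup S$ such that $\g/(K,S)$ fails Condition (L). By definition of the failure of Condition (K), there is a loop $\gamma=e_1\ldots e_n$ in $\gh$ based at $v=s(\gamma)$ such that no loop $\alpha$ at $v$ satisfies simultaneously $\alpha\nsubseteq\gamma$ and $\gamma\nsubseteq\alpha$. Take $K$ to be exactly the saturated hereditary set constructed in the paragraph preceding Lemma~\ref{lem6.3}, and take $S:=B\cap B_K$ (or more carefully $S := \{w\in B\cap B_K : w\notin K\}$) so that $(K,S)$ is admissible with $H\subseteq K$ and $B\subseteq K\cup S$ by Lemma~\ref{lem6.3}. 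The set $K$ was cooked up precisely so that every range of a path leaving $\gamma^0$ gets absorbed into $K$, while $\gamma^0$ itself stays outside $K$ (the last assertion of Lemma~\ref{lem6.3}: $A\cap\gamma^0=\emptyset$ for all $A\in K$). Consequently $\gamma$ descends to a genuine loop $\widetilde\gamma$ in $\g/(K,S)$ — no edge $e_i$ is deleted since $r(e_i)\not\subseteq K$ — and this loop has no exit: any candidate exit edge $f$ at some $r(e_i)$ with $f\neq e_{i+1}$ would, by the defining property of $\gamma$, have its range landing in $X\subseteq K_0\subseteq K$, hence $f\notin \Phi(\mathcal{G}/(K,S))^1$. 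Likewise $r(e_i)\setminus s(e_{i+1})$ is contained in $K$, so condition (i) of Condition (L) also fails. Hence $\g/(K,S)$ does not satisfy Condition (L).

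The main obstacle I anticipate is the forward direction's bookkeeping: carefully translating a loop in the quotient $\g/(K,S)$ back to a loop in $\g$ (and in $\gh$), and then showing that the auxiliary loop $\alpha$ provided by Condition (K) still produces a \emph{live} exit after passing to $\g/(K,S)$ — i.e., that $K$ hasn't deleted the exit. The argument hinges on the interplay between ``$\alpha\nsubseteq\gamma$'' and the fact that $\alpha$ is a loop based at the same vertex, which forces one of $\alpha$'s ranges to meet $\gamma^0$; combined with $A\cap\gamma^0=\emptyset$ for $A\in K$, this prevents $K$ from swallowing the departure point of $\alpha$ from $\gamma$. Everything else — admissibility of $(K,S)$, the descent of $\gamma$ to the quotient, the failure of both clauses of Condition (L) — is essentially a direct consequence of Lemma~\ref{lem6.3} and the explicit description of $\Phi(\mathcal{G}/(K,S))^1$ as $\{e\in\overline{\g}^1 : r'(e)\notin K\}$.
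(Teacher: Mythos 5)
Your overall strategy coincides with the paper's: the forward direction extracts an exit from the second loop supplied by Condition (K), and the converse uses the saturated hereditary set $K$ built before Lemma \ref{lem6.3} to turn the bad loop $\gamma$ into an exitless loop in the quotient. Two steps need repair, however. In the forward direction you justify that the exit edge survives in $\g/(K,S)$ by appealing to Lemma \ref{lem6.3}, but that lemma concerns only the particular $K$ manufactured from a loop witnessing the failure of Condition (K); it says nothing about an arbitrary admissible pair $(K,S)$ with $H\subseteq K$. The correct (and simpler) justification is the paper's: if $\beta$ is the auxiliary loop based at $s_\g(\gamma)$, then $s_\g(\gamma)\in r_\g(\beta)$ and $s_\g(\gamma)\notin K$, so $r_\g(\beta)\notin K$; walking backwards along $\beta$ using the hereditary property of $K$ then shows that the range of every edge of $\beta$ --- in particular of the first edge where $\beta$ departs from $\gamma$ --- lies outside $K$, so that edge is a live exit in $\g/(K,S)$.

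The more serious issue is in the converse. You claim that any candidate exit edge $f$ at $r(e_i)$ with $f\neq e_{i+1}$ ``has its range landing in $X\subseteq K$.'' This is literally false when $r_\g(f)\cap\gamma^0\neq\emptyset$: only $r_\g(f)\setminus\gamma^0$ belongs to $X$, and by the last assertion of Lemma \ref{lem6.3} a set meeting $\gamma^0$ cannot lie in $K$, so such an $f$ survives in $\g/(K,B_K)$ and would be a genuine exit. The missing idea --- which is the heart of the paper's argument --- is to exclude this case by splicing: if $s_\g(f)=s_\g(e_j)$, $f\neq e_j$, and $r_\g(f)$ contains some $s_\g(e_l)$, then $e_1\ldots e_{j-1}fe_l\ldots e_n$ is a loop at $s_\g(\gamma)$ that neither is a subpath of $\gamma$ nor contains $\gamma$ as a subpath, contradicting the choice of $\gamma$. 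The same splicing argument is needed for clause (i) of Condition (L): your observation only gives $r_\g(e_i)\setminus\gamma^0\in K$, whereas one must also rule out extra vertices of $\gamma^0$ sitting in $r_\g(e_i)$ before concluding $r(e_i)=s(e_{i+1})$ in the quotient. With these insertions your proof matches the paper's.
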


\begin{proof}
Suppose that $\gh$ satisfies Condition (K) and $(K,S)$ is an admissible pair in $\g$ with $H\subseteq K$. Let $\alpha=e_1\ldots e_n$ be a loop in $\g/(K,S)$. Since $\alpha$ is also a loop in $\gh$ and $\gh$ satisfies Condition (K), there is a loop $\beta=f_1\ldots f_m$ in $\g$ with $s_\g(\alpha)=s_\g(\beta)$, and neither $\alpha\subseteq \beta$ nor $\beta\subseteq \alpha$. Without loos of generality, assume $e_1\neq f_1$. By the fact $s_\g(\alpha)=s_\g(\beta)\in r_\g(\beta)$, we have $r_\g(\beta)\notin K$, and so $r_\g(f_1)\notin K$ by the hereditary property of $K$. Therefore, $f_1$ is an exit for $\alpha$ in $\g/(K,S)$ and we conclude that $\g/(K,S)$ satisfies Condition (L).

For the converse, suppose on the contrary that $\gh$ does not satisfy Condition (K). Then there exists a loop $\gamma=e_1\ldots e_n$ in $\gh$ such that there are no loops $\alpha$ with $s(\alpha)=s(\gamma)$, $\alpha\nsubseteq \gamma$, and $\gamma\nsubseteq \alpha$. As Lemma \ref{lem6.3}, construct a saturated hereditary subset $K$ of $\go$ and consider the quotient ultragraph $\g/(K,B_K)=(\Psi(G^0),\Psi(\go),\Psi(\gl),r',s')$. We denote by $\lfloor . \rfloor$ the equivalent classes in $\Psi(\go)$. We show that $\gamma$ as a loop in $\g/(K,B_K)$ has no exits and $r'(e_i)=s'(e_{i+1})$ for $1\leq i\leq n$. If $f$ is an exit for $\gamma$ in $\g/(K,B_K)$ such that $s'(f)=s'(e_j)$ and $f\neq e_j$, then $r_\g(f)\notin K$ and $r_\g(f)\cap \gamma^0\neq \emptyset$ (if $r_\g(f)\cap \gamma^0= \emptyset$, then $r_\g(f)=r_\g(f)\setminus \gamma^0\in X\subseteq K$, a contradiction). So, there is $e_l\in \gamma$ such that $s_\g(e_l)\in r_\g(f)$. If we set $\alpha:=e_1\ldots e_{j-1}fe_l\ldots e_n$, then $\alpha$ is a loop in $\g$ with $s_\g(\alpha)=s_\g(\gamma)$, and $\alpha\nsubseteq \gamma$, $\gamma\nsubseteq \alpha$, that contradicts the hypothesis. Therefore, $\gamma$ has no exits in $\g/(K,B_K)$. Moreover, we have $r'(e_i)\cap \lfloor\gamma^0\rfloor=s'(e_{i+1})$ for each $1\leq i\leq n$, because the otherwise gives an exit for $\gamma$ in $\g/(K,B_K)$ by the construction of $K$. Hence,
$$r'(e_i)\setminus s'(e_{i+1})= r'(e_i)\setminus \lfloor\gamma^0\rfloor=\lfloor\emptyset\rfloor$$
and we get $r'(e_i)=s'(e_{i+1})$ (note that the fact $r_\g(e_i)\setminus \gamma^0\in K$ implies $r'(e_i)\setminus \lfloor\gamma^0 \rfloor=\lfloor \overline{r_\g(e_i)}\setminus \gamma^0 \rfloor=\lfloor\emptyset \rfloor$). Therefore, the quotient ultragraph $\g/(K,B_K)$ does not satisfy Condition (L) as desired.
\end{proof}

To prove the main result of this section, Theorem \ref{thm6.6}, we need also the following lemma.

\begin{lem}\label{lem6.5}
Let $\g/(H,B)=(\pGo,\pgo, \pgl,r,s)$ be a quotient ultragraph of $\g$. If $\gh$ does not satisfy Condition (L), then $C^*(\g/(H,B))$ contains an ideal Morita-equivalent to $C(\mathbb{T})$. In particular, $C^*(\gh)$ contains non-gauge invariant ideals.
\end{lem}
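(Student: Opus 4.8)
The plan is to exhibit an explicit loop in $\gh$ that witnesses the failure of Condition (L) and then build from it a graph-$C^*$-algebraic copy of $C(\mathbb{T})$ sitting inside $C^*(\gh)$ as (a corner of) an ideal. If $\gh$ fails Condition (L), then by definition there is a loop $\alpha=e_1\ldots e_n$ in $\gh$ with $r(e_i)=s(e_{i+1})$ for all $i$ (indices mod $n$) and with no exit; in particular $r(e_i)$ emits only the single edge $e_{i+1}$ of $\pgl$, so each $s(e_{i+1})=r(e_i)$ is a regular vertex with $|s^{-1}(r(e_i))|=1$. I would first record the key structural consequence: the closed set $\{[v]: [v]=s(e_i) \text{ for some } i\}$ together with the algebra elements it generates is "closed under moving forward along $\alpha$", and the only paths in $(\gh)^*$ with source in $\alpha^0$ are powers of cyclic rotations of $\alpha$.

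Next I would pass to the graph picture via the machinery of Section~4. Choose a finite set $F\subseteq \sg\cup\pgl$ that contains all the edges $e_1,\ldots,e_n$ (none of the $s(e_i)$ are singular, so $F^0$ can be taken disjoint from $\alpha^0$, or handled directly). By Corollary~\ref{cor4.3}, $C^*(G_F)$ is isomorphic to the $C^*$-subalgebra $X_F$ of $C^*(\gh)$ generated by $\{t_e,q_{[v]}:[v]\in F^0,e\in F^1\}$, via an isomorphism respecting generators. Under the explicit formulas of Proposition~\ref{prop4.2}, the edges $(e_1,e_2),(e_2,e_3),\ldots,(e_n,e_1)$ of $G_F$ form a loop $\widetilde\alpha$ in $G_F$, and because $\alpha$ has no exit in $\gh$ and $r(e_i)=s(e_{i+1})$ exactly (so $R(\omega)=[\emptyset]$ for the relevant $\omega$ and there is no $[v]\in F^0$ inside $r(e_i)$), the loop $\widetilde\alpha$ has no exit in $G_F$ either, and each vertex $e_i$ of $G_F$ emits only the edge $(e_i,e_{i+1})$. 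Thus $\widetilde\alpha$ is a loop in the graph $G_F$ with no exit. It is a standard fact for graph $C^*$-algebras that a loop with no exit generates a hereditary subalgebra Morita equivalent to $C^*$ of a single cycle of length $n$, which is $M_n(C(\mathbb{T}))$, hence Morita equivalent to $C(\mathbb{T})$; concretely the projection $p=\sum_{i=1}^n p_{e_i}$ in $C^*(G_F)$ is a full projection in the ideal $\mathcal{I}$ generated by $\{p_{e_i}\}$, and $p\mathcal{I}p\cong M_n(C(\mathbb{T}))$.

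I would then transport this back: let $I$ be the ideal of $C^*(\gh)$ generated by $\{t_{e_i}t_{e_i}^*: 1\le i\le n\}$ (equivalently by $\phi_F(\mathcal{I})$, where $\phi_F$ is the isomorphism of Corollary~\ref{cor4.3}). The projection $q:=\sum_{i=1}^n t_{e_i}t_{e_i}^*$ lies in $I$, and I claim $q\,C^*(\gh)\,q=q\,X_F\,q$: any spanning element $t_\beta q_{[A]} t_\gamma^*$ compressed by $q$ forces $r(\beta)$ and $r(\gamma)$ to meet some $s(e_i)=r(e_{i-1})$, and since the only edges with source meeting $\alpha^0$ are the $e_j$ themselves (no exits, singleton source preimages), $\beta$ and $\gamma$ must be paths built from the $e_j$'s, so the compression lands in $X_F$. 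Hence $qC^*(\gh)q=qX_F q\cong qC^*(G_F)q\cong M_n(C(\mathbb{T}))$, a nonzero corner of $I$, giving $I$ Morita equivalent to $C(\mathbb{T})$. For the "in particular" clause: since $C(\mathbb{T})$ has ideals that are not all of $C(\mathbb{T})$ and are not invariant under the dual/rotation action coming from the gauge action (the gauge action on $C^*(\gh)$ restricts, through the Morita equivalence, to the action on $C(\mathbb{T})$ rotating the spectrum, or equivalently translation on $\widehat{\mathbb{Z}}$), pulling back a proper nonzero gauge-variant ideal of $qC^*(\gh)q$ through the Rieffel correspondence yields a non-gauge-invariant ideal of $C^*(\gh)$.

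The main obstacle I anticipate is the compression identity $qC^*(\gh)q=qX_Fq$, i.e.\ verifying that no "long-range" spanning elements $t_\beta q_{[A]}t_\gamma^*$ survive compression by $q$ except those already in the finite subalgebra. This needs the no-exit hypothesis used carefully: one must show that $q_{s(e_i)}t_f\ne 0$ with $f\in\pgl$ forces $f=e_{i+1}$, which is exactly the statement that $s(e_{i+1})=r(e_i)$ emits only $e_{i+1}$ and has $|s^{-1}(s(e_{i+1}))|=1$, together with the fact (from Condition~(L) failing) that $r(e_i)\setminus s(e_{i+1})=[\emptyset]$ so there is no room for $f\ne e_{i+1}$ with $s(f)\subseteq r(e_i)$. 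Handling the possibility that some vertices of $\alpha^0$ happen to coincide as classes, or that $F^0$ accidentally meets $r(e_i)$, requires a small amount of care in choosing $F$, but is routine. Once the compression identity is in hand, the rest is the standard graph-$C^*$-algebra fact about cycles without exits plus the Rieffel correspondence.
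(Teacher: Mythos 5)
Your proposal is correct and follows essentially the same route as the paper: both arguments isolate the exitless cycle $\gamma=e_1\ldots e_n$ with $r(e_i)=s(e_{i+1})$, show that compressing the ideal it generates by the cycle's vertex projection(s) yields a circle algebra (you obtain $M_n(C(\mathbb{T}))$ by compressing with $q=\sum_i t_{e_i}t_{e_i}^*$ and routing through the finite approximating graph $G_F$ of Corollary \ref{cor4.3}, while the paper obtains $C(\mathbb{T})$ directly by compressing with the single projection $q_{[v]}$, $[v]=s(\gamma)$, and applying the gauge-invariant uniqueness theorem to the one-loop graph), then verify fullness of the corner and pull non-rotation-invariant ideals of $C(\mathbb{T})$ back through the Morita equivalence. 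The crux you correctly single out --- that no spanning element $t_\beta q_{[A]}t_\gamma^*$ survives the compression unless $\beta$ and $\gamma$ are paths along the cycle, because each $s(e_{i+1})=r(e_i)$ is a regular vertex class emitting only $e_{i+1}$ and $r(e_i)\setminus s(e_{i+1})=[\emptyset]$ --- is exactly the computation the paper carries out when it identifies $q_{[v]}I_\gamma q_{[v]}$ with $\overline{\mathrm{span}}\{(t_\gamma)^nq_{[v]}(t_\gamma^*)^m\}$.
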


\begin{proof}
Suppose that $\gamma=e_1\ldots e_n$ is a loop in $\g/(H,B)$ without exits and $r(e_i)=s(e_{i+1})$ for $1\leq i\leq n$. If $C^*(\gh)=C^*(t_e,q_{[A]})$, for each $i$ we have
$$t_{e_i}^*t_{e_i}=q_{r(e_i)}=q_{s(e_{i+1})}=t_{e_{i+1}}t_{e_{i+1}}^*.$$
Write $[v]:=s(\gamma)$ and let $I_{\gamma}$ be the ideal of $C^*(\g/(H,B))$ generated by $q_{[v]}$. Since $\gamma$ has no exits in $\gh$ and we have
$$q_{s(e_i)}=(t_{e_i}\ldots t_{e_n})q_{[v]}(t_{e_n}^* \ldots t_{e_i}^*)$$
for every $1\leq i\leq n$, an easy argument shows that
$$I_{\gamma}=\overline{\mathrm{span}}\left\{t_{\alpha}q_{[v]}t_{\beta}^*: \alpha,\beta\in (\g/(H,B))^*, [v]\subseteq r(\alpha)\cap r(\beta)\right\}.$$
So, we have
$$q_{[v]}I_{\gamma}q_{[v]}=\overline{\mathrm{span}}\left\{(t_{\gamma})^n q_{[v]} (t_{\gamma}^*)^m: m,n\geq 0\right\},$$
where $(t_\gamma)^0=(t_\gamma^*)^0:=q_{[v]}$. We show that $q_{[v]}I_{\gamma}q_{[v]}$ is a full corner in $I_{\gamma}$ which is isometrically isomorphic to $C(\mathbb{T})$. For this, associated to the graph $E$
\[\begin{tikzpicture}[shorten >=1pt,auto,thick]
\node (w) at (-2,0) {$w$};
\draw[->] (w) .. controls (-3.5,1.2) and (-3.5,-1.2) .. node[left] {$f$} (w);
\end{tikzpicture}
\]
define $Q_w:=q_{[v]}$ and $T_f:=t_{\gamma}~(=t_{\gamma}q_{[v]})$. Then $\{T_f,Q_w\}$ is a Cuntz-Krieger $E$-family in $q_{[v]}I_{\gamma}q_{[v]}$. Assume $C^*(E)=C^*(s_f,p_w)$. Since $Q_w\neq 0$, the gauge-invariant uniqueness theorem for graph $C^*$-algebras implies that the $*$-homomorphism $\phi:C^*(E)\rightarrow q_{[v]}I_{\gamma}q_{[v]}$ with $p_w\mapsto Q_w$ and $s_f\mapsto T_f$ is faithful. Moreover, the $C^*$-algebra $q_{[v]}I_{\gamma}q_{[v]}$ is generated by $\{T_f,Q_w\}$, and hence, $\phi$ is an isomorphism. We know that $C^*(E)\cong C(\mathbb{T})$, so $q_{[v]}I_{\gamma}q_{[v]}$ is isomorphic to $C(\mathbb{T})$. The fullness of $q_{[v]}I_{\gamma}q_{[v]}$ in $I_{\gamma}$ may be easily shown. Indeed, if $J$ is an ideal in $I_{\gamma}$ with $q_{[v]}I_{\gamma}q_{[v]}\subseteq J$, we then have $q_{[v]}\in J$ and $J=I_{\gamma}$. Therefore, $I_\gamma$ is Morita-equivalent to $q_{[v]}I_\gamma q_{[v]}\cong C(\mathbb{T})$ as desired.

On the other hand,  $C(\mathbb{T})$ contains infinitely many non-gauge invariant ideals (corresponding with closed subsets $\emptyset \neq U\subsetneq\mathbb{T}$). Therefore,  the corner $q_{[v]} I_{\gamma} q_{[v]}$ contains non-gauge invariant ideals, and so does $I_{\gamma}$ by the Morita-equivalence. This follows the second statement of lemma because each ideal of $I_{\gamma}$ is an ideal of $C^*(\g/(H,B))$.
\end{proof}

We now ready to prove the main result of this section as a generalization of \cite[Corollary 3.8]{bat} and \cite[Proposition 7.3]{kat3}.

\begin{thm}\label{thm6.6}
A quotient ultragraph $\gh$ satisfies Condition (K) if and only if all ideals of $C^*(\gh)$ are gauge invariant.
\end{thm}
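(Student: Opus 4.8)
The plan is to prove both implications by reducing to the corresponding statements about ordinary graphs and the already-established machinery of Sections 5 and 6. For the forward direction, suppose $\gh$ satisfies Condition (K) and let $I$ be an arbitrary ideal of $C^*(\gh)$. I would like to show $I$ coincides with the smallest gauge invariant ideal containing it, or equivalently that $I$ is invariant under the gauge action $\gamma$. The key tool is Proposition \ref{prop6.4}: Condition (K) for $\gh$ means that \emph{every} quotient $\g/(K,S)$ with $H\subseteq K$, $B\subseteq K\cup S$ satisfies Condition (L). Combined with Theorem \ref{thm5.3}, which identifies the gauge invariant ideals $J_{[K,S]}$ of $C^*(\gh)$ and shows $C^*(\gh)/J_{[K,S]}\cong C^*(\g/(K,S))$, this says that every gauge invariant quotient of $C^*(\gh)$ is a quotient ultragraph $C^*$-algebra satisfying Condition (L).

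\textbf{Forward direction, main argument.} Given an arbitrary ideal $I$, let $J$ be the largest gauge invariant ideal contained in $I$ (the ``gauge invariant part'' of $I$, obtained by averaging over $\mathbb{T}$); then $J=J_{[K,S]}$ for some admissible pair with $H\subseteq K$, $B\subseteq K\cup S$ by Theorem \ref{thm5.3}. Passing to the quotient, $I/J$ is an ideal of $C^*(\gh)/J\cong C^*(\g/(K,S))$ with zero gauge invariant part, and $\g/(K,S)$ satisfies Condition (L) by Proposition \ref{prop6.4}. The heart of the matter is then to show that a quotient ultragraph satisfying Condition (L) has \emph{no} nonzero ideal with trivial gauge invariant part, which would force $I/J=0$, i.e.\ $I=J$ gauge invariant. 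For this I would use the Cuntz-Krieger uniqueness theorem (Theorem \ref{thm4.8}): if $I/J$ were a nonzero ideal with trivial gauge invariant part, the quotient map $C^*(\g/(K,S))\to C^*(\g/(K,S))/(I/J)$ would kill no vertex projection $q_{[A]}$ (since $q_{[A]}\in I/J$ with $[A]\ne[\emptyset]$ would, by the hereditary-saturated analysis, generate a nonzero gauge invariant subideal), hence Theorem \ref{thm4.8} makes it injective, contradicting $I/J\ne 0$. Care is needed here to argue that a nonzero ideal with trivial gauge-invariant part cannot contain any nonzero $q_{[A]}$; one may localize via the finite subgraphs $G_F$ of Section 4 and invoke the analogous fact for ordinary graph $C^*$-algebras, where it is classical that under Condition (L) every nonzero ideal contains a vertex projection.

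\textbf{Converse direction.} For the contrapositive, suppose $\gh$ does \emph{not} satisfy Condition (K). By Proposition \ref{prop6.4} there is an admissible pair $(K,S)$ with $H\subseteq K$, $B\subseteq K\cup S$ such that $\g/(K,S)$ fails Condition (L). By Lemma \ref{lem6.5} applied to the quotient ultragraph $\g/(K,S)$ (which is itself a quotient ultragraph, of $\g$ by $(K,S)$, and whose $C^*$-algebra equals $C^*(\gh)/J_{[K,S]}$ by Theorem \ref{thm5.3}), the algebra $C^*(\g/(K,S))$ contains a non-gauge invariant ideal. Pulling this ideal back through the quotient map $C^*(\gh)\to C^*(\gh)/J_{[K,S]}\cong C^*(\g/(K,S))$ yields a non-gauge invariant ideal of $C^*(\gh)$, since the preimage of a gauge invariant ideal is gauge invariant and $J_{[K,S]}$ is gauge invariant. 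Hence not all ideals of $C^*(\gh)$ are gauge invariant, completing the contrapositive.

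\textbf{Expected obstacle.} The routine parts are the two reductions via Proposition \ref{prop6.4} and Theorem \ref{thm5.3}; the delicate step is the forward direction's claim that a quotient ultragraph with Condition (L) admits no nonzero ideal with trivial gauge invariant part. The subtlety is that one must control ideals that are not a priori gauge invariant, and the $q_{[A]}$'s do not exhaust the generators once infinite emitters and breaking vertices are present. I expect to handle this by the $G_F$-approximation of Section 4: an arbitrary nonzero ideal meets some $C^*$-subalgebra $C^*(t_e,q_{[v]}:[v]\in F^0,e\in F^1)\cong C^*(G_F)$ nontrivially, $G_F$ satisfies Condition (L) by Lemma \ref{lem4.7}, so the classical graph result gives a vertex projection of $G_F$ in the ideal; tracing back through the formulas of Proposition \ref{prop4.2} then yields either some $q_{[v]}$ or some $t_et_e^*$ in the ideal, and in either case one extracts a nonzero $q_{[A]}$, which generates a nonzero gauge invariant subideal and contradicts triviality of the gauge invariant part.
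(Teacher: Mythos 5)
Your proof is correct and follows essentially the same route as the paper: the forward direction rests on Proposition \ref{prop6.4} plus the Cuntz--Krieger uniqueness theorem (Theorem \ref{thm4.8}) applied to the quotient by the admissible pair $(K,S)$ extracted from $I$, and the converse is Proposition \ref{prop6.4} plus Lemma \ref{lem6.5} and Theorem \ref{thm5.3}, exactly as in the paper. The only cosmetic differences are that the paper pins down $(K,S)$ by setting $K=\{A:p_A\in I\}$ in the preimage ideal in $C^*(\g)$ rather than via the maximal gauge invariant subideal (these give the same pair), and your parenthetical ``averaging over $\mathbb{T}$'' should really be the intersection $\bigcap_{z\in\mathbb{T}}\gamma_z(I)$, since averaging an element of $I$ over the gauge action need not stay in $I$.
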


\begin{proof}
Suppose that $\gh$ satisfies Condition (K). Take an arbitrary ideal $J$ in $C^*(\gh)$ and let $I$ be its corresponding ideal in $C^*(\g)$ with $I_{(H,B)}\subseteq I$ (by Proposition \ref{prop5.1}). If $C^*(\g)=C^*(s_e,p_A)$ and set
$$K:=\left\{A\in \g^0:p_A\in I\right\} ~~,~~ S:=\left\{w\in B_K:p_w-\sum_{s(e)=w,r(e)\notin K}s_e s_e^*\in I\right\},$$
then \cite[Lemma 3.4]{tom2} implies that $(K,S)$ is an admissible pair in $\g$. Since $I_{(K,S)}\subseteq I$, the map
$$\left\{
    \begin{array}{ll}
       \phi:C^*(\g)/{I_{(K,S)}}\longrightarrow {C^*(\g)}/{I} &  \\
       \hspace{1.1cm}a+I_{(K,S)}\longmapsto a+I & (\mathrm{for} ~~ a\in C^*(\g) )
    \end{array}
  \right.
$$
is a well-defined $*$-homomorphism. Let us denote $\pi:C^*(\g/(K,S))\rightarrow C^*(\g)/I_{(K,S)}$ the isomorphism of Proposition \ref{prop5.1}. Since the quotient ultragraph $\g/(K,S)$ satisfies Condition (L) by Proposition \ref{prop6.4}, the Cuntz-Krieger uniqueness theorem, Theorem \ref{thm4.8}, implies that $\phi\circ \pi$ is injective. So, $\phi$ is injective that follows $I=I_{(K,S)}$ and $I$ is gauge invariant. Now as $J$ is the image of $I_{(K,S)}$ into the quotient $C^*(\g)/I_{(H,B)}\cong C^*(\gh)$, we conclude that $J=J_{[K,S]}$ is a gauge invariant ideal in $C^*(\gh)$.

Conversely, assume $\gh$ does not satisfy Condition (K). By Proposition \ref{prop6.4}, there exists an admissible pair $(K,S)$ in $\g$ with $H\subseteq K$ and $B\subseteq K\cup S$ such that the quotient ultragraph $\g/(K,S)$ does not satisfy Condition (L). Note that in this case, $I_{(H,B)}\subseteq I_{(K,S)}$ and if $J_{[K,S]}$ is the image of $I_{(K,S)}$ into $C^*(\g)/I_{(H,B)}\cong C^*(\gh)$, then $C^*(\gh)/J_{[K,S]}\cong C^*(\g/(K,S))$. But Lemma \ref{lem6.5} says that $C^*(\g/(K,S))$ contains infinitely many  non-gauge invariant ideals. On the other hand, for every gauge invariant ideal $J$ of $C^*(\gh)$, the ideal $J+J_{[K,S]}$ is also gauge invariant in $C^*(\gh)/J_{[K,S]}\cong C^*(\g/(K,S))$. This concludes that the $C^*$-algebra $C^*(\gh)$ contains non-gauge invariant ideals and completes the proof.
\end{proof}

We gather the results of this section in the following corollary. Recall from \cite{bro} that the \emph{real rank} of a unital $C^*$-algebra $A$ is the smallest integer $\mathrm{RR}(A)$ such that for every $\varepsilon>0$, positive integer $n\leq \mathrm{RR}(A)+1$ and  $n$-tuple $(x_1,\ldots, x_n)$ of self-adjoint elements in $A$, there is an $n$-tuple $(y_1,\ldots,y_n)$ of self-adjoint elements of $A$ such that $\sum_{i=1}^n y_i^2$ is invertible and
$$\left\|\sum_{i=1}^n(x_i-y_i)^2\right\|<\varepsilon.$$
If $A$ is non-unital, $\mathrm{RR}(A)$ is the real rank of its unitization.

\begin{cor}\label{cor6.7}
Let $H$ be a saturated hereditary subset of $\go$. Then the following conditions are equivalent:
\begin{enumerate}
  \item The quotient ultragraph $\g/(H,B_H)$ satisfies Condition (K).
  \item The quotient ultragraph $\gh$ satisfies Condition (K) for some $B\subseteq B_H$.
  \item For every admissible pair $(K,S)$ with $H\subseteq K$, $\g/(K,S)$ satisfies Condition (L).
  \item If $B\subseteq B_H$, all ideals of $C^*(\gh)\cong C^*(\g)/I_{(H,B)}$ are gauge invariant.
  \item If $B\subseteq B_H$, the real rank of $C^*(\gh)\cong C^*(\g)/I_{(H,B)}$ is zero.
\end{enumerate}
\end{cor}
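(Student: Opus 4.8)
The plan is to derive the corollary by assembling the equivalences already established in this section and then adding the single new ingredient, namely the real rank statement in (5). The equivalences $(1)\Leftrightarrow(2)\Leftrightarrow(3)$ are essentially a restatement of Proposition \ref{prop6.4}: condition (3) is literally the right-hand side of Proposition \ref{prop6.4} for the pair $(H,B_H)$, which gives $(1)\Leftrightarrow(3)$, and since the construction of the saturated hereditary set $K$ in Lemma \ref{lem6.3} only depends on $H$ (not on $B$), and the obstruction to Condition (K) produced there works for any $B\subseteq B_H$, one gets that Condition (K) for $\gh$ is independent of the choice of $B$; hence $(1)\Leftrightarrow(2)$. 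The equivalence $(2)\Leftrightarrow(4)$ is immediate from Theorem \ref{thm6.6} applied to the quotient ultragraph $\gh$ together with the isomorphism $C^*(\gh)\cong C^*(\g)/I_{(H,B)}$ of Proposition \ref{prop5.1}.

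The genuinely new step is $(4)\Leftrightarrow(5)$, and this is where I expect the main work to lie. The forward direction $(4)\Rightarrow(5)$: when all ideals of $C^*(\gh)$ are gauge invariant, Condition (K) holds, so by Proposition \ref{prop6.4} every quotient $\g/(K,S)$ with $H\subseteq K$ satisfies Condition (L); I would then invoke the structure theorem identifying the quotients of $C^*(\gh)$ (Theorem \ref{thm5.3}) together with the fact — provable via the graph approximation of Section 4, i.e. Corollary \ref{cor4.3} and the fact that a direct limit of real-rank-zero $C^*$-algebras has real rank zero — that a quotient ultragraph $C^*$-algebra satisfying Condition (L) with all ideals gauge invariant has real rank zero. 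Concretely, the cleanest route is: reduce to showing that for a quotient ultragraph all of whose ideals are gauge invariant, every hereditary subalgebra $C^*(G_{F_n})$ has real rank zero because the finite graphs $G_{F_n}$ inherit Condition (K) (one needs that $G_F$ satisfies Condition (K), not merely (L); this can be extracted from the same combinatorics as Lemma \ref{lem4.7} applied to all loops simultaneously), then use the classical result that a graph $C^*$-algebra of a graph satisfying Condition (K) has real rank zero, and finally pass to the direct limit. The converse $(5)\Rightarrow(4)$ is the contrapositive of Lemma \ref{lem6.5}: if Condition (K) fails, Proposition \ref{prop6.4} produces $(K,S)$ with $H\subseteq K$ for which $\g/(K,S)$ fails Condition (L), and Lemma \ref{lem6.5} then yields an ideal of $C^*(\g/(K,S))\cong C^*(\gh)/J_{[K,S]}$ that is Morita equivalent to $C(\mathbb{T})$; since $C(\mathbb{T})$ has real rank one and real rank zero passes to quotients, ideals, and is a Morita invariant for $\sigma$-unital $C^*$-algebras, $C^*(\gh)$ cannot have real rank zero.

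The main obstacle I anticipate is the implication $(4)\Rightarrow(5)$, specifically transferring the ``Condition (K) $\Rightarrow$ real rank zero'' result from graph $C^*$-algebras to quotient ultragraph $C^*$-algebras. Real rank zero is not obviously preserved under the passage from $C^*(\gh)$ to its corners $C^*(G_{F_n})$ and back, so I would need either (a) to argue directly that $C^*(\gh)$ is a direct limit (with injective connecting maps) of the $C^*(G_{F_n})$ — which follows from the density statement after Corollary \ref{cor4.3} and the nesting remark that $F_1\subseteq F_2$ gives $X_1\subseteq X_2$ — and then use that real rank zero is preserved under such direct limits (a standard fact, Brown--Pedersen), or (b) to verify that each $G_{F_n}$ satisfies Condition (K) (not just (L)) and hence $C^*(G_{F_n})$ has real rank zero by the graph-algebra result. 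Route (a) is cleaner provided one first knows each $C^*(G_{F_n})$ has real rank zero, which again needs (b); so in practice both ingredients are required. I would therefore spend most of the proof establishing that Condition (K) for $\gh$ forces Condition (K) for every $G_{F_n}$, by examining all loops in $G_{F_n}$ (which by Lemma \ref{lem4.7}'s analysis correspond to loops in $\gh$) and checking that the ``two incomparable loops'' condition lifts along the dual-graph construction, and then citing \cite{bat} for graphs and \cite{bro} for the direct-limit permanence of real rank zero.
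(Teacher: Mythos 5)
Your proposal is correct and follows essentially the same route as the paper: the equivalence of (1)--(4) is assembled from Proposition \ref{prop6.4}, Theorem \ref{thm6.6} and Proposition \ref{prop5.1}; the implication to (5) is proved by realizing $C^*(\gh)$ as the inductive limit of the finite graph algebras $C^*(G_{F_n})$, verifying each $G_{F_n}$ satisfies Condition (K) so that $C^*(G_{F_n})$ has real rank zero, and invoking the Brown--Pedersen permanence for direct limits; and the converse uses Lemma \ref{lem6.5} to produce an ideal Morita-equivalent to $C(\mathbb{T})$ in a quotient, which obstructs real rank zero. The only cosmetic difference is that the paper closes the cycle via $(2)\Rightarrow(5)$ and $(5)\Rightarrow(3)$ rather than $(4)\Leftrightarrow(5)$, which is immaterial given that (1)--(4) are already known to be equivalent.
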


\begin{proof}
We have shown the equivalence of conditions (1)-(4). So, it suffices to show the implications (2) $\Rightarrow$ (5) and (5) $\Rightarrow$ (3). For (2) $\Rightarrow$ (5), suppose $B\subseteq B_H$ and the quotient ultragraph $\gh$ satisfies Condition (K). Select an increasing sequence $\{F_n\}_{n=1}^\infty$ of finite subsets of $\sg \cup \pgl$ such that $\cup_{n=1}^\infty F_n=\sg \cup \pgl$. Then, similar to the proof of Theorem \ref{thm4.5}, $C^*(\gh)$ is isomorphic to the inductive limit $\underrightarrow{\lim} C^*(G_{F_n})$. Since loops in each $G_{F_n}$ come from those of $\gh$, we may select $F_n$'s such that each finite graph $G_{F_n}$  satisfies Condition (K), and so, the real rank of $C^*(G_{F_n})$ is zero \cite[Theorem 4.1]{jeo2}. Thus the real rank of $C^*(\gh)$ is zero by \cite[Proposition 3.1]{bro}.

For (5) $\Rightarrow$ (3), suppose that there is an admissible pair $(K,S)$ with $H\subseteq K$ such that the quotient ultragraph $\g/(K,S)$ does not satisfy Condition (L). We can also assume that $B\subseteq K\cup S$ by Lemma \ref{lem6.3}. Then, by Lemma \ref{lem6.5}, $C^*(\g/(K,S))$ contains an ideal Morita-equivalent to $C(\mathbb{T})$, and so $\mathrm{RR}(C^*(\g/(K,S)))\neq 0$ by \cite[Corolary 2.8]{bro}. As $C^*(\g/(K,S))$ is a quotient of $C^*(\gh)$, Theorem 3.14 of \cite{bro} implies that $\mathrm{RR}(C^*(\gh))\neq 0$.
\end{proof}

For ultragraph $C^*$-algebras, Corollary \ref{cor6.7} shows both \cite[Proposition 7.3]{kat3} and \cite[Proposition 5.26]{kat2} because every ultragraph can be considered as a quotient ultragraph with the trivial admissible pair $(\{\emptyset\}, \{\emptyset\})$. However, our proof is quite different from those of \cite{kat3} and \cite{kat2}.

\begin{cor}\label{cor6.8}
An ultragraph $\g$ satisfies Condition (K) if and only if all ideals of $C^*(\g)$ are gauge invariant if and only if the real rank of $C^*(\g)$ is zero.
\end{cor}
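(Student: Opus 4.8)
The plan is to deduce Corollary \ref{cor6.8} as the special case of Corollary \ref{cor6.7} in which the admissible pair is the trivial one. First I would observe that any ultragraph $\g$ is literally a quotient ultragraph: taking $H=\{\emptyset\}$ (the smallest saturated hereditary set) and $B=B_H=\emptyset$, the construction of Section 3 gives $\overline{\g}=\g$, the equivalence relation $\sim$ is equality, and hence $\g/(\{\emptyset\},\emptyset)=\g$ with $C^*(\g/(\{\emptyset\},\emptyset))=C^*(\g)$. By Remark \ref{rem6.2}, Condition (K) for this quotient ultragraph coincides with the usual Condition (K) for the ultragraph $\g$ in the sense of \cite[Definition 7.1]{kat3}. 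Moreover $I_{(\{\emptyset\},\emptyset)}=0$, so $C^*(\g)/I_{(\{\emptyset\},\emptyset)}=C^*(\g)$ on the nose.

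With these identifications in hand, I would simply apply Corollary \ref{cor6.7} with $H=\{\emptyset\}$. Condition (2) there becomes ``$\g$ (viewed as a quotient ultragraph) satisfies Condition (K) for some $B\subseteq B_H=\emptyset$'', i.e. ``$\g$ satisfies Condition (K)''. Condition (4) becomes ``all ideals of $C^*(\g)$ are gauge invariant'', since the only admissible pair available is $(\{\emptyset\},\emptyset)$ and $C^*(\gh)\cong C^*(\g)$. Condition (5) becomes ``the real rank of $C^*(\g)$ is zero''. The equivalence of (2), (4) and (5) in Corollary \ref{cor6.7} therefore gives exactly the three-way equivalence asserted in Corollary \ref{cor6.8}, and the proof is a one-line invocation.

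I do not anticipate a genuine obstacle here; the content is entirely in Corollary \ref{cor6.7} and, behind it, Theorem \ref{thm6.6} and Proposition \ref{prop6.4}. The only point needing a word of care is the bookkeeping that the trivial admissible pair really does reproduce $\g$ unchanged (so that $\overline{\g}=\g$ and the quotient ultragraph $C^*$-algebra is $C^*(\g)$ rather than merely being isomorphic to it), and that Condition (K) for the quotient ultragraph restricts correctly to the classical notion; both of these are recorded in Remark \ref{rem6.2} and the discussion preceding it, so the proof can safely cite them. Thus the write-up would read: apply Corollary \ref{cor6.7} to the saturated hereditary set $H=\{\emptyset\}$, noting $\g/(\{\emptyset\},\emptyset)=\g$ and $I_{(\{\emptyset\},\emptyset)}=0$; the equivalences (2)$\Leftrightarrow$(4)$\Leftrightarrow$(5) then specialize to the claim.
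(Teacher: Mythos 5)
Your proposal is correct and follows exactly the paper's route: the author likewise obtains Corollary~\ref{cor6.8} by viewing $\g$ as the quotient ultragraph associated to the trivial admissible pair (for which $\overline{\g}=\g$, the relation $\sim$ is equality, $B_H=\emptyset$, and $I_{(\{\emptyset\},\emptyset)}=0$) and then specializing the equivalences of Corollary~\ref{cor6.7}. The only cosmetic caveat is that the remark identifying Condition~(K) for the trivial quotient with \cite[Definition 7.1]{kat3} is unlabelled in the source, so you would need to add a label rather than cite it as Remark~6.2.
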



\section{Primitive ideals in $C^*(\g)$}

In this section, we apply quotient ultragraphs to describe primitive gauge invariant ideals of an ultragraph $C^*$-algebra. Recall that since every ultragraph $C^*$-algebra $C^*(\g)$ is separable (as assumed $\go$ to be countable), a prime ideal of $C^*(\g)$ is primitive and vice versa \cite[Corollaire 1]{dix}.

\begin{defn}
Let $\g$ be an ultragraph. For two sets $A,B\in \go$, we write $A\geq B$ if either $B\subseteq A$, or there exists $\alpha\in \g^*$ with $|\alpha|\geq 1$ such that $s(\alpha)\in A$ and $B\subseteq r(\alpha)$. We simply write $A\geq v$, $v\geq B$, and $v\geq w$ if $A\geq \{v\}$, $\{v\}\geq B$, and $\{v\}\geq \{w\}$, respectively. A subset $M\subseteq \go$ is said to be \emph{downward directed} whenever for every $A,B\in M$, there exists $\emptyset\neq C\in M$ such that $A,B\geq C$.
\end{defn}

To prove Proposition \ref{prop7.3} below, we need a simple lemma.

\begin{lem}\label{lem7.2}
If $\gh$ satisfies Condition (L), then every nonzero ideal of $C^*(\gh)$ contains some projection $q_{[A]}$ with $[A]\neq [\emptyset]$.
\end{lem}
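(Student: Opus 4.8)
The plan is to reduce the claim to the Cuntz-Krieger uniqueness theorem for graph $C^*$-algebras via the finite graphs $G_F$ already constructed in Section 4. First I would let $J$ be a nonzero ideal of $C^*(\gh)$ and pick a nonzero element $a\in J$. Using the dense span description $C^*(\gh)=\overline{\mathrm{span}}\{t_\alpha q_{[A]}t_\beta^*\}$ together with the fact that the $*$-subalgebra generated by $\{q_{[v]}:[v]\in\sg\}\cup\{t_e:e\in\pgl\}$ is dense in $C^*(\gh)$, I would choose a finite subset $F\subseteq \sg\cup\pgl$ so that $X_F:=C^*(\{t_e,q_{[v]}:[v]\in F^0,e\in F^1\})$ meets $J$ nontrivially; concretely, approximate $a$ well enough by an element of $X_F$ so that $J\cap X_F\neq\{0\}$. (One must be a little careful here: intersecting an ideal with a subalgebra need not be interesting in general, so the cleaner route may be to argue that if $J$ hit no $X_F$, then a standard approximation/conditional-expectation argument would force $a=0$.)

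Next I would invoke Corollary \ref{cor4.3}, which gives an isometric isomorphism $\pi_F:C^*(G_F)\xrightarrow{\ \cong\ }X_F$ respecting generators. Pulling $J\cap X_F$ back along $\pi_F$ yields a nonzero ideal $J'$ of the \emph{graph} $C^*$-algebra $C^*(G_F)$. Now the key input is Lemma \ref{lem4.7}: since $\gh$ satisfies Condition (L), so does $G_F$, hence $C^*(G_F)$ satisfies the graph Condition (L). By the Cuntz-Krieger uniqueness theorem for graph $C^*$-algebras, every nonzero ideal of $C^*(G_F)$ contains a nonzero vertex projection $p_a$ for some $a\in G_F^0$ — this is the standard consequence that a representation killing no vertex projection is faithful, contrapositively applied to the quotient map $C^*(G_F)\to C^*(G_F)/J'$.

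Finally I would push this vertex projection back into $C^*(\gh)$: $\pi_F(p_a)=Q_a$ is one of the projections $Q_e=t_et_e^*$, $Q_{[v]}=q_{[v]}(1-\sum_{e\in F^1}t_et_e^*)$, or $Q_\omega=q_{R(\omega)}(1-\sum_{e\in F^1}t_et_e^*)$ from Proposition \ref{prop4.2}, and it lies in $J$. In each of the three cases I would produce an honest $q_{[A]}\in J$: if $Q_a=t_et_e^*\in J$ then $t_e^*(t_et_e^*)t_e=q_{r(e)}\in J$ and $r(e)\neq[\emptyset]$; if $Q_a=q_{[v]}(1-\sum t_et_e^*)\in J$ or $Q_a=q_{R(\omega)}(1-\sum t_et_e^*)\in J$, then multiplying by the relevant projection and using that $J$ is an ideal lets me extract $q_{[v]}Q_a\,q_{[v]}$-type elements; more simply, since $Q_a\le q_{[A]}$ for the appropriate $[A]$ (namely $[v]$, resp. $R(\omega)$) and $Q_a\neq 0$, I would instead argue directly that the \emph{hereditary} subalgebra or the ideal generated by $Q_a$ contains a genuine $q_{[A]}$ — for the $Q_\omega$ case, recall $R(\omega)\neq[\emptyset]$ and $\omega\notin\Gamma_0$, so $R(\omega)$ either contains a sink $[w]$ (giving $q_{[w]}=q_{[w]}Q_\omega\in J$) or a vertex emitting an edge $f\notin F^1$ with $s(f)\subseteq R(\omega)$ (giving $t_ft_f^*\le Q_\omega$, hence $q_{r(f)}\in J$ as in the first case). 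The main obstacle I anticipate is the bookkeeping in this last step — verifying in each of the three shapes of $Q_a$ that one can land on an actual generator $q_{[A]}$ with $[A]\neq[\emptyset]$ — together with the mild subtlety in the first step of ensuring the nonzero element of $J$ can be captured inside some $X_F$; both are routine but need the explicit formulas of Proposition \ref{prop4.2} and the definitions of $\Gamma$ and $R(\omega)$.
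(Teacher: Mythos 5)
Your strategy can be made to work, but it is a long detour compared with the paper's proof, which is a two-line contrapositive: if $J$ contained no $q_{[A]}$ with $[A]\neq[\emptyset]$, then the quotient map $C^*(\gh)\to C^*(\gh)/J$ would carry the universal representation to a representation with all projections nonzero, so Theorem \ref{thm4.8} (already proved, and applicable since $\gh$ satisfies Condition (L)) forces this map to be injective and $J=(0)$. You instead re-run the machinery \emph{inside} the proof of Theorem \ref{thm4.8}: exhaust $C^*(\gh)$ by the subalgebras $X_F\cong C^*(G_F)$, use Lemma \ref{lem4.7} and the graph Cuntz--Krieger theorem to find a vertex projection of $G_F$ in the pulled-back ideal, and push it forward. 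Your first worry is unfounded and needs no conditional expectation: if $J\cap X_F=\{0\}$ for every finite $F$, the quotient map is an injective, hence isometric, $*$-homomorphism on each $X_F$, hence isometric on their dense union, so $J=(0)$. What your route buys is an explicit projection $q_{[A]}\in J$ rather than a proof by contradiction; what it costs is the entire case analysis at the end, which is exactly where your write-up is weakest.

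That final step contains a genuine gap. For $Q_a=Q_{[v]}=q_{[v]}(1-\sum_{e\in F^1}t_et_e^*)$ you offer only the remark that ``the ideal generated by $Q_a$ contains a genuine $q_{[A]}$,'' which is circular -- that containment is precisely what the lemma asserts, and $Q_{[v]}\in J$ does \emph{not} directly give $q_{[v]}\in J$. The correct argument uses that $[v]\in F^0\subseteq\sg$ is singular: if $[v]$ is a sink then $q_{[v]}t_et_e^*\leq q_{[v]}q_{s(e)}=0$ for all $e\in F^1$, so $Q_{[v]}=q_{[v]}\in J$; if $[v]$ is an infinite emitter, choose $f\in\pgl\setminus F^1$ with $s(f)=[v]$, note $t_ft_f^*t_et_e^*=0$ for $e\in F^1$, whence $t_ft_f^*=t_ft_f^*Q_{[v]}\in J$ and $q_{r(f)}=t_f^*(t_ft_f^*)t_f\in J$ with $r(f)\neq[\emptyset]$. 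Your dichotomy for $Q_\omega$ also misses a case: $R(\omega)\neq[\emptyset]$ does not guarantee that $R(\omega)$ contains a nonzero vertex class at all (every singleton of a representative may lie in $H$); but in that degenerate case $q_{R(\omega)}t_et_e^*=q_{R(\omega)\cap s(e)}t_et_e^*=0$ for all $e\in F^1$, so $Q_\omega=q_{R(\omega)}$ is itself the desired projection. With these repairs your proof closes, but the paper's appeal to Theorem \ref{thm4.8} avoids all of this bookkeeping.
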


\begin{proof}
Take an arbitrary ideal $J$ in $C^*(\gh)$. If there are no $q_{[A]}\in J$ with $[A]\neq[\emptyset]$, then the Cuntz-Krieger uniqueness theorem implies that the quotient homomorphism $\phi:C^*(\gh)\rightarrow C^*(\gh)/J$ is injective. Hence, we have $J=\ker \phi=(0)$.
\end{proof}

\begin{prop}\label{prop7.3}
Let $H$ be a saturated hereditary subset of $\go$. Then the ideal $I_{(H,B_H)}$ in $C^*(\g)$ is primitive if and only if the quotient ultragraph $\g/(H,B_H)$ satisfies Condition (L) and the collection $\go \setminus H$ is downward directed.
\end{prop}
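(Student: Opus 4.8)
The plan is to reduce the statement to the well-known characterization of primitive ideals for ordinary graph $C^*$-algebras, passing through the quotient ultragraph $C^*$-algebra $C^*(\g/(H,B_H))\cong C^*(\g)/I_{(H,B_H)}$ supplied by Proposition \ref{prop5.1}. Thus $I_{(H,B_H)}$ is primitive if and only if $C^*(\g/(H,B_H))$ is a primitive $C^*$-algebra, and since $C^*(\g/(H,B_H))$ is separable, this is equivalent to $C^*(\g/(H,B_H))$ being prime (by \cite{dix}, as recalled above). So the entire statement becomes: $C^*(\g/(H,B_H))$ is prime $\iff$ $\g/(H,B_H)$ satisfies Condition (L) and $\go\setminus H$ is downward directed. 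I would prove the two implications separately.

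For the forward direction, assume $C^*(\g/(H,B_H))$ is prime. If Condition (L) failed, then by Lemma \ref{lem6.5} (or directly by the construction inside its proof) $C^*(\g/(H,B_H))$ would contain an ideal Morita-equivalent to $C(\mathbb{T})$, and $C(\mathbb{T})$ is not prime; since primeness passes to nonzero (hereditary) subalgebras that are full, this is a contradiction --- so Condition (L) must hold. For downward directedness: given $A, B \in \go\setminus H$, the classes $[A], [B]$ are nonzero in $\pgo$, so the ideals $J_A, J_B$ of $C^*(\g/(H,B_H))$ generated by $q_{[A]}$ and $q_{[B]}$ are nonzero; primeness gives $J_A \cap J_B \neq 0$, and by Lemma \ref{lem7.2} (valid since Condition (L) holds) this intersection contains some $q_{[C]}$ with $[C]\neq[\emptyset]$. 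A standard computation using the spanning description $C^*(\g/(H,B_H))=\overline{\mathrm{span}}\{t_\alpha q_{[A]} t_\beta^*\}$ shows that $q_{[C]} \in J_A$ forces $[C]$ to be ``reachable'' from $[A]$, i.e. there is a path $\alpha$ in the quotient ultragraph with $s(\alpha)=[A]$ (or $[C]\subseteq[A]$) and $[C]\subseteq r(\alpha)$; lifting this back to $\g$ yields $A \geq C$ in $\go$, and likewise $B \geq C$, with $C \notin H$. Hence $\go\setminus H$ is downward directed.

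For the converse, assume Condition (L) holds for $\g/(H,B_H)$ and $\go\setminus H$ is downward directed; I want $C^*(\g/(H,B_H))$ prime, i.e. any two nonzero ideals $J_1, J_2$ have nonzero intersection. By Lemma \ref{lem7.2}, each $J_i$ contains a nonzero vertex-type projection $q_{[A_i]}$ with $A_i \in \go\setminus H$ (after passing to $\overline\g$ and choosing a representative; one must be slightly careful that the relevant classes come from genuine sets $A_i \notin H$ rather than only breaking-vertex vertices $w'$, but since $q_{[w']} \leq q_{[\overline A]}$ for suitable $A$ one can always find such an $A_i$). Downward directedness gives $\emptyset \neq C \in \go$, $C\notin H$, with $A_1, A_2 \geq C$. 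Translating $A_i \geq C$ into the quotient ultragraph: either $C\subseteq A_i$, giving $q_{[C]} \leq q_{[A_i]}$, or there is a path $\alpha_i$ in $\g/(H,B_H)$ with $s(\alpha_i)\in[A_i]$ and $[C]\subseteq r(\alpha_i)$, so that $t_{\alpha_i} q_{[C]} t_{\alpha_i}^* \leq q_{[A_i]}$ and this element lies in $J_i$. Either way $q_{[C]}$, or a nonzero projection equivalent to a subprojection of it, lies in both $J_1$ and $J_2$, so $J_1 \cap J_2 \neq 0$. Thus $C^*(\g/(H,B_H))$ is prime, hence primitive, hence $I_{(H,B_H)}$ is primitive.

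The main obstacle I expect is the bookkeeping in the ``reachability'' step --- precisely controlling, in the forward direction, which projections $q_{[C]}$ can appear inside the ideal generated by $q_{[A]}$, and translating between the partial order $\geq$ on $\go$ (defined for $\g$) and paths/ranges in the extended and quotiented ultragraph $\g/(H,B_H)$, especially around the added vertices $w'$ and the equivalence $\sim$ modulo $H$. One has to verify that a path witnessing $[A]\geq[C]$ in $\g/(H,B_H)$ can always be taken to come from an honest path in $\g$ (its range may pick up $H$-junk, but modulo $H$ this is harmless since $C\notin H$), and conversely that a path in $\g$ with range meeting $C$ but also possibly inside $H$ still descends to a usable path in the quotient because $r'(\alpha)\notin H$. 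Handling the edge cases where $[A]$ or $[C]$ is represented by a breaking-vertex vertex $w'$, and where $A_i$ is a sink, also requires care but is routine once the dictionary is set up.
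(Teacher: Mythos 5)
Your overall architecture (reduce to primeness of $C^*(\g/(H,B_H))$ via Proposition \ref{prop5.1}, kill Condition~(L) failures with Lemma \ref{lem6.5}, and run the converse through Lemma \ref{lem7.2} plus downward directedness) is the paper's, and the Condition~(L) part and the converse are essentially correct as you sketch them (in the converse, do conjugate back: $q_{[C]}=t_{\alpha_i}^*\bigl(t_{\alpha_i}q_{[C]}t_{\alpha_i}^*\bigr)t_{\alpha_i}\in J_i$, so the \emph{same} projection $q_{[C]}$ lies in both ideals; ``equivalent subprojections'' in each ideal separately would not give $J_1\cap J_2\neq 0$).

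There is, however, a genuine gap in your forward argument for downward directedness. The step ``$q_{[C]}\in J_A$ forces $[C]$ to be reachable from $[A]$'' is false, because the set $\{[C]:q_{[C]}\in J_A\}$ is the \emph{saturated} hereditary closure of $[A]$, and saturation moves backwards along edges. Already for the graph with two vertices $v,a$ and a single edge $e$ with $s(e)=v$, $r(e)=a$ (and $H$ trivial), one has $p_v=s_es_e^*=s_ep_as_e^*$ in the ideal generated by $p_a$, yet $\{a\}\geq\{v\}$ fails: $v\notin\{a\}$ and $a$ emits no edges. So the projection handed to you by Lemma \ref{lem7.2} inside $J_A\cap J_B$ need not be a common lower bound for $A$ and $B$, and following edges out of a saturated vertex only produces lower bounds for that vertex, not a single $C$ with $A,B\geq C$. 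The repair is to bypass Lemma \ref{lem7.2} here entirely: since $J_A\cap J_B=\overline{J_AJ_B}\neq 0$, the corner $q_{[A]}\,C^*(\g/(H,B_H))\,q_{[B]}$ is nonzero, so some spanning element satisfies $q_{[A]}\bigl(t_\alpha q_{[D]}t_\beta^*\bigr)q_{[B]}\neq 0$; this forces $s(\alpha)\subseteq[A]$ and $s(\beta)\subseteq[B]$, and then $C:=r_\g(\alpha)\cap D\cap r_\g(\beta)$ is a common lower bound witnessed by honest paths. This is exactly how the paper proceeds, and it also disposes of the ``which $q_{[C]}$ can appear in the ideal'' bookkeeping you flagged as the main obstacle.
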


\begin{proof}
Let $I_{(H,B_H)}$ be a primitive ideal of $C^*(\g)$. Since $C^*(\g)/I_{(H,B_H)}\cong C^*(\g/(H,B_H))$, the zero ideal in $C^*(\g/(H,B_H))$ is primitive. If $\g/(H,B_H)$ does not satisfy Condition (L), then $C^*(\g/(H,B_H))$ contains an ideal $J$ Morita-equivalent to $C(\mathbb{T})$  by Lemma \ref{lem6.5}. Select two ideals $I_1,I_2$ in $C(\mathbb{T})$ with $I_1\cap I_2=(0)$, and let $J_1,J_2$ be their corresponding ideals in $J$. Then $J_1$ and $J_2$ are two nonzero ideals of $C^*(\g/(H,B_H))$ with $J_1\cap J_2=(0)$, contradicting the primness of $C^*(\g/(H,B_H))$. Therefore, $\gh$ satisfies Condition (L).

Now we show that $M:=\go\setminus H$ is downward directed. For this, we take arbitrary sets $A,B\in M$ and consider the ideals
$$J_1:=C^*(\g/(H,B_H)) q_{[A]} C^*(\g/(H,B_H))$$
and
$$J_2:=C^*(\g/(H,B_H)) q_{[B]} C^*(\g/(H,B_H))$$
in $C^*(\g/(H,B_H))$ generated by $q_{[A]}$ and $q_{[B]}$, respectively. Since $A,B\notin H$, the projections $q_{[A]},q_{[B]}$ are nonzero by Theorem \ref{thm3.11}, and so are the ideals $J_1,J_2$. The primness of $C^*(\g/(H,B_H))$ implies that the ideal
$$J_1J_2=C^*\left(\g/(H,B_H)\right) q_{[A]} C^*\left(\g/(H,B_H)\right) q_{[B]} C^*\left(\g/(H,B_H)\right)$$
is nonzero, and hence $q_{[A]} C^*(\g/(H,B_H)) q_{[B]} \neq \{0\}$. As the set
$$\mathrm{span}\left\{ t_\alpha q_{[D]} t_\beta^* : \alpha, \beta \in (\gh)^*,~ r(\alpha)\cap [D]\cap r(\beta)\neq [\emptyset] \right\}$$
is dense in $C^*(\g/(H,B_H))$, there exist $\alpha,\beta\in (\g/(H,B_H))^*$ and $[D]\in \pgo$ such that $q_{[A]}(t_\alpha q_{[D]} t_\beta^*) q_{[B]}\neq 0$. In this case, we must have $s(\alpha)\subseteq [A]$ and $s(\beta)\subseteq [B]$ and thus, $A,B\geq C$ for $C:=r_\g(\alpha) \cap D \cap r_\g(\beta)$.

In order to prove the converse, assume that $\g/(H,B_H)$ satisfies Condition (L) and the collection $M=\go\setminus H$ is downward directed. Fix two nonzero ideals $J_1,J_2$ of $C^*(\g/(H,B_H))$. By Lemma \ref{lem7.2}, there are nonzero projections $q_{[A]}\in J_1$ and $q_{[B]}\in J_2$. Then $A,B\notin H$ and, since $M$ is downward directed, there exists $C\in M$ such that $A,B\geq C$. Hence, the ideal $J_1\cap J_2$ contains the nonzero projection $q_{[C]}$. Since $J_1$ and $J_2$ were arbitrary, this implies that the $C^*$-algebra $C^*(\g/(H,B_H))$ is primitive and consequently, $I_{(H,B_H)}$ is a primitive ideal in $C^*(\g)$ by Proposition \ref{prop5.1}.
\end{proof}

The next proposition describes another kind of primitive ideals in $C^*(\g)$.

\begin{prop}\label{prop7.4}
Let $(H,B)$ be an admissible pair in $\g$ and let $B=B_H\setminus \{w\}$. Then the ideal $I_{(H,B)}$ in $C^*(\g)$ is primitive if and only if $A\geq w$ for all $A\in \go\setminus H$.
\end{prop}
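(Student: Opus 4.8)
The plan is to mimic the strategy of Proposition \ref{prop7.3}, working in the quotient ultragraph $\g/(H,B)$ with $B=B_H\setminus\{w\}$. By Proposition \ref{prop5.1} we have $C^*(\g)/I_{(H,B)}\cong C^*(\g/(H,B))$, so $I_{(H,B)}$ is primitive in $C^*(\g)$ if and only if the zero ideal of $C^*(\g/(H,B))$ is prime, i.e. $C^*(\g/(H,B))$ is a prime $C^*$-algebra. The vertex set of this quotient ultragraph is $\{[v]:v\in G^0\setminus H\}\cup\{[w']\}$, where $[w']$ is the one new vertex coming from the single breaking vertex $w$ that was \emph{not} collapsed; every edge emitted by $w'$ has range in $H$, hence $[w']$ is a sink in $\g/(H,B)$ (it emits no edges of $\Phi(\g^1)$). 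A useful first observation is that the condition ``$A\geq w$ for all $A\in\go\setminus H$'' translates, in the quotient, into ``$[A]\geq[w']$ for all $[\emptyset]\neq[A]\in\Phi(\go)$'': indeed for any path $\alpha$ in $\g$ with $s_\g(\alpha)\in A$ and $w\in r_\g(\alpha)$, the corresponding path in $\g/(H,B)$ (its range does not lie in $H$ since $w\notin H$) witnesses $[A]\geq[w']$, and conversely, using that $\overline{r_\g(\alpha)}$ contains $w$ exactly when $r_\g(\alpha)$ does.

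For the forward direction, suppose $I_{(H,B)}$ is primitive, so $C^*(\g/(H,B))$ is prime. Fix any $[\emptyset]\neq[A]\in\Phi(\go)$. The ideals $J_1$ generated by $q_{[A]}$ and $J_2$ generated by $q_{[w']}$ are both nonzero (by Theorem \ref{thm3.11}, since $A\notin H$ and $w'\notin H$), so primeness forces $J_1J_2\neq(0)$, whence $q_{[A]}\,C^*(\g/(H,B))\,q_{[w']}\neq\{0\}$. Using the dense spanning set $\overline{\mathrm{span}}\{t_\alpha q_{[D]}t_\beta^*\}$ we get a nonzero element $q_{[A]}(t_\alpha q_{[D]}t_\beta^*)q_{[w']}\neq0$; the right factor $q_{[w']}$ forces $r(\beta)\cap[D]\cap[w']\neq[\emptyset]$ and $s(\beta)\subseteq[w']$. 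But $[w']$ is a sink, so $\beta$ must be a path of length zero, $\beta=[w']$, and then the nonzero word reduces to $q_{[A]}t_\alpha q_{[D]}q_{[w']}$ with $s(\alpha)\subseteq[A]$ and $[w']\subseteq r(\alpha)\cap[D]$. This exhibits a path $\alpha$ (of length $\geq 1$ unless already $[w']\subseteq[A]$, i.e. $w\in A$) with $s(\alpha)\in A$ and $w\in r_\g(\alpha)$; hence $A\geq w$. Since $A$ was arbitrary in $\go\setminus H$, this gives the claim.

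For the converse, assume $A\geq w$ for every $A\in\go\setminus H$; we show $C^*(\g/(H,B))$ is prime. Let $J_1,J_2$ be nonzero ideals. Here is the one point where a little care is needed: to apply Lemma \ref{lem7.2} (every nonzero ideal contains some $q_{[A]}$) we would want Condition (L), which we have not assumed. Instead, I would argue directly: any nonzero ideal $J$ of $C^*(\g/(H,B))$ must contain a nonzero $q_{[A]}$ \emph{anyway} in this situation, because we will show every nonzero ideal contains $q_{[w']}$, and $q_{[w']}$ being a minimal-type projection at the sink $[w']$ gives $J_1\cap J_2\supseteq\langle q_{[w']}\rangle\neq(0)$. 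Concretely: take $0\neq a\in J$; writing $a$ in the spanning form and compressing, one finds some $q_{[A]}\in J$ with $[A]\neq[\emptyset]$ by the standard graph-algebra gauge-invariant-ideal argument restricted to the subalgebras $C^*(G_{F_n})$ of Corollary \ref{cor4.3} — in fact the hypothesis $A\geq w$ means each such $q_{[A]}$ dominates, via a path $\alpha$ with $t_\alpha q_{[A']}t_\alpha^*\leq q_{[A]}$ and $q_{[w']}\le q_{[A']}$ pushed down to $[w']$, a translate of $q_{[w']}$; since $q_{[w']}$ is the projection at a sink, $t_\alpha^* q_{[A]} t_\alpha\geq q_{[w']}$ forces $q_{[w']}\in J$. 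Thus $q_{[w']}\in J_1\cap J_2$, so $J_1\cap J_2\neq(0)$, proving primeness; Proposition \ref{prop5.1} then says $I_{(H,B)}$ is primitive.

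The main obstacle I anticipate is precisely this last extraction step in the converse: getting from ``$J$ nonzero'' to ``$J$ contains a nonzero vertex projection'' without invoking Condition (L). The clean route is to observe that $[w']$ is a sink and that $A\geq w$ for all $A$ means $[w']$ is reached from \emph{every} vertex, so the ``hereditary--saturated'' structure forces any nonzero ideal to reach down to the corner at $[w']$; since the corner $q_{[w']}C^*(\g/(H,B))q_{[w']}$ is one-dimensional (the sink $[w']$ emits no edges, so this corner is just $\C q_{[w']}$), there is no room for a proper nonzero ideal to avoid $q_{[w']}$. Formalizing ``reaches down to $q_{[w']}$'' via the finite-graph approximations $G_{F_n}$ and the known ideal structure of graph $C^*$-algebras is the technical heart of the argument; everything else is bookkeeping with the spanning set and the fact that $[w']$ is a sink.
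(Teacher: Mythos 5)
Your forward direction is essentially the paper's argument: use primeness to get $q_{[\overline{A}]}\,C^*(\g/(H,B))\,q_{[w']}\neq\{0\}$, expand in the spanning set, and use the fact that $[w']$ is a sink to kill the $\beta$-factor and read off a path witnessing $A\geq w$. That part is fine.

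The converse, however, has a genuine gap, and it sits exactly at the step you yourself flag as the ``technical heart.'' You need to pass from ``$J$ is a nonzero ideal'' to ``$J$ contains some nonzero $q_{[A]}$,'' and the route you propose --- ``the standard graph-algebra gauge-invariant-ideal argument restricted to the subalgebras $C^*(G_{F_n})$'' --- does not work for an arbitrary (possibly non--gauge-invariant) ideal: in the absence of Condition~(L) a nonzero ideal genuinely need not contain any vertex projection (a loop without exits produces an ideal Morita-equivalent to $C(\mathbb{T})$, whose proper ideals miss every $q_{[A]}$, cf.\ Lemma~\ref{lem6.5}). The observation you are missing, and which is how the paper closes this gap, is that the hypothesis $A\geq w$ for all $A\in\go\setminus H$ \emph{already implies} that $\g/(H,B)$ satisfies Condition~(L): every nonempty $[A]$ admits a path $\alpha$ with $s(\alpha)\subseteq[A]$ and $[w']\subseteq r(\alpha)$, and since $[w']$ is a sink it cannot lie on a loop, so no loop $e_1\cdots e_n$ can simultaneously have $r(e_i)=s(e_{i+1})$ for all $i$ and no exits (otherwise nothing in $r(e_i)$ could ever reach $[w']$). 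Once Condition~(L) is in hand, Lemma~\ref{lem7.2} gives you nonzero projections $q_{[A]}\in J_1$, $q_{[B]}\in J_2$; downward directedness of $\go\setminus H$ is immediate because $\{w\}\notin H$ (heredity of $H$ would otherwise force $r_\g(s_\g^{-1}(w))\subseteq H$, contradicting $w\in B_H$) and $A,B\geq w$; and the argument of Proposition~\ref{prop7.3} then puts $q_{[\{w\}]}$ (or your $q_{[w']}$) in $J_1\cap J_2$. So your instinct about where the difficulty lies is correct, but your proposed resolution is circular-in-spirit, whereas the clean resolution is to \emph{derive} Condition~(L) from the hypothesis rather than to circumvent it.
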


\begin{proof}
Suppose that $I_{(H,B)}$ is a primitive ideal and take an arbitrary $A\in \go\setminus H$. If $\overline{A}:=A\cup \{v':v\in A\cap (B_H\setminus B)\}$, then $q_{[\overline{A}]}$ and $q_{[w']}$ are two nonzero projections in $C^*(\g/(H,B))$. Consider two ideals
$$J_{[\overline{A}]}:=C^*(\gh)q_{[\overline{A}]} C^*(\gh)$$
and
$$J_{[w']}:= C^*(\gh) q_{[w']} C^*(\gh)$$
of $C^*(\gh)$ generated by $q_{[\overline{A}]}$ and $q_{[w']}$, respectively. The primness of $C^*(\gh)\cong C^*(\g)/I_{H,B}$ implies that the ideal
$$J_{[\overline{A}]} J_{[w']}=C^*(\g/(H,B)) q_{[\overline{A}]} C^*(\g/(H,B)) q_{[w']} C^*(\g/(H,B))$$
is nonzero, and hence $q_{[\overline{A}]} C^*(\g/(H,B)) q_{[w']}\neq \{0\}$. Then there exist $\alpha,\beta\in (\g/(H,B))^*$ such that $q_{[\overline{A}]}t_\alpha t_\beta^* q_{[w']}\neq 0$. Since $[w']$ is a sink in $\g/(H,B)$, we must have $q_{[\overline{A}]} t_\alpha q_{[w']}\neq 0$. If $|\alpha|=0$, then $[w']\subseteq [\overline{A}]$, $w'\in \overline{A}$ and $w\in A$. If $|\alpha|\geq 1$, then $s(\alpha)\subseteq [\overline{A}]$ and $[w']\subseteq r(\alpha)$, which yield $s_\g(\alpha)\in A$ and $w\in r_\g(\alpha)$. Therefore, we obtain $A\geq w$ in either case.

Conversely, assume $A\geq w$ for every $A\in \go\setminus H$. Then the collection $\go\setminus H$ is downward directed. Moreover, this hypothesis implies that, for every $[\emptyset]\ne [A]\in \pgo$, there exists $\alpha\in (\gh)^*$ such that $s(\alpha)\subseteq [A]$ and $[w']\subseteq r(\alpha)$. Since $[w']$ is a sink in $\gh$, we see that the quotient ultragraph $\gh$ satisfies Condition (L). Now similar to the last part of the proof of Proposition \ref{prop7.3}, we can show that $I_{(H,B)}$ is a primitive ideal.
\end{proof}

Recall that the loops in a quotient ultragraph $\gh$ come from those in the initial ultragraph $\g$. So, to check Condition (L) for a quotient ultragraph $\gh$, we can use the following definition.

\begin{defn}
Let $H$ be a saturated hereditary subset of $\go$ and denote $M:=\go\setminus H$. A loop $\alpha=e_1\ldots e_n$ is said to be in $\g\setminus H$ if $r_\g(\alpha)\in M$. In this case, we say that $\alpha$ has \emph{an exit in $\g\setminus H$} if either $r_\g(e_i)\setminus s_\g(e_{i+1})\in M$ for some $i$, or there is an edge $f$ with $r_\g(f)\in M$ such that $s_\g(f)=s_\g(e_i)$ and $f\neq e_i$, for some $1\leq i\leq n$.
\end{defn}

It is easy to verify that a quotient ultragraph $\g/(H,B)$ satisfies Condition (L) if and only if every loop in $\g\setminus H$ has an exit in $\g\setminus H$. Now we characterize all primitive gauge invariant ideals of an ultragraph $C^*$-algebra $C^*(\g)$ that is a generalization of \cite[Theorem 4.7]{bat} and \cite[Theorem 4.5]{dri}.

\begin{thm}\label{thm7.6}
Let $\g$ be an ultragraph. A gauge invariant ideal $I_{(H,B)}$ of $C^*(\g)$ is primitive if and only if one of the following holds:
\begin{enumerate}
\item $B=B_H$, $\go\setminus H$ is downward directed, and every loop in $\g\setminus H$ has exits in $\g\setminus H$.
\item $B=B_H\setminus\{w\}$ for some $w\in B_H$, and $A\geq w$ for all $A\in \go\setminus H$.
\end{enumerate}
\end{thm}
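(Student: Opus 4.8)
The plan is to combine the two preceding propositions (\ref{prop7.3} and \ref{prop7.4}) with the dichotomy for how big $B$ can be relative to $B_H$. The key structural observation is that primitivity of $I_{(H,B)}$ forces $B$ to be either all of $B_H$ or to miss exactly one breaking vertex. First I would recall that $I_{(H,B)}$ is primitive if and only if $C^*(\g)/I_{(H,B)}\cong C^*(\gh)$ is a prime $C^*$-algebra (using Proposition \ref{prop5.1} and the separability remark at the start of the section, which makes prime equivalent to primitive). So the task reduces to deciding when $C^*(\gh)$ is prime, and then translating the two established criteria into the stated conditions.

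The main step is the dichotomy on $B$. Suppose $I_{(H,B)}$ is primitive but $B_H\setminus B$ contains two distinct vertices $w_1\neq w_2$. Then $[w_1']$ and $[w_2']$ are two distinct sinks in $\gh$, and $q_{[w_1']}$, $q_{[w_2']}$ are nonzero projections in $C^*(\gh)$ with $q_{[w_1']}q_{[w_2']}=q_{[w_1'\cap w_2']}=q_{[\emptyset]}=0$ (the singletons $\{w_1'\}$ and $\{w_2'\}$ are disjoint in $\overline{\g}^0$, and their intersection is not equivalent to anything in $H$ since both $w_i'\notin H$). Because $[w_i']$ is a sink, the ideal generated by $q_{[w_i']}$ is $\overline{\mathrm{span}}\{t_\alpha q_{[w_i']} t_\beta^* : [w_i']\subseteq r(\alpha)\cap r(\beta)\}$, and one checks that the product of the two generated ideals is zero because every spanning element of $J_{[w_1']}$ is killed on the right by $q_{[w_2']}$ (the range of any path into $[w_1']$ meets $[w_2']$ trivially, as $w_2'$ does not lie in any $\overline{r_\g(e)}$ for an edge with range outside $H$ that reaches $w_1'$ — here I would use that $w_2'$ only appears in $\overline{A}$ when $w_2\in A$, together with the hereditary structure). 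This contradicts primeness, so $|B_H\setminus B|\le 1$, i.e. either $B=B_H$ (case (1)) or $B=B_H\setminus\{w\}$ for a single $w\in B_H$ (case (2)).

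With the dichotomy in hand, the two cases are exactly the two propositions. In case $B=B_H$, Proposition \ref{prop7.3} says $I_{(H,B_H)}$ is primitive iff $\gh$ satisfies Condition (L) and $\go\setminus H$ is downward directed; then I invoke the remark just before the theorem that Condition (L) for $\gh$ is equivalent to saying every loop in $\g\setminus H$ has an exit in $\g\setminus H$, which gives precisely statement (1). In case $B=B_H\setminus\{w\}$, Proposition \ref{prop7.4} says $I_{(H,B)}$ is primitive iff $A\geq w$ for all $A\in\go\setminus H$, which is statement (2) verbatim. For the converse direction one simply runs the same two propositions backwards: if (1) holds, Proposition \ref{prop7.3} gives primitivity; if (2) holds, Proposition \ref{prop7.4} gives primitivity (note that (2) also forces $\go\setminus H$ downward directed and $\gh$ to satisfy Condition (L), as observed in the proof of \ref{prop7.4}, so no consistency issue arises).

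The step I expect to be the genuine obstacle is the dichotomy argument — specifically, verifying cleanly that $q_{[w_1']}$ and $q_{[w_2']}$ generate ideals with zero product. One must be careful that a path in $\gh$ ending at the sink $[w_i']$ cannot have range meeting $[w_j']$ for $j\ne i$; this hinges on the precise bookkeeping of which $\overline{A}$ contain the adjoined vertices $w'$ and on the fact that $w_i'$ is a sink (so no path passes \emph{through} it). Everything else is a routine translation through Propositions \ref{prop5.1}, \ref{prop7.3}, \ref{prop7.4} and the Condition (L) reformulation.
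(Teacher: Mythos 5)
Your proposal is correct and follows essentially the same route as the paper: primeness forces $|B_H\setminus B|\le 1$ because $[w_1']$ and $[w_2']$ would be two sinks with $q_{[w_1']}q_{[w_2']}=q_{[\emptyset]}=0$, so the corner $q_{[w_1']}C^*(\gh)q_{[w_2']}$ (equivalently the product of the two generated ideals) vanishes, and the two resulting cases are exactly Propositions \ref{prop7.3} and \ref{prop7.4} together with the Condition (L) reformulation. One small caveat: your parenthetical claim that $w_2'$ cannot lie in the range of a path reaching $[w_1']$ is not actually true (a single edge may have both $w_1$ and $w_2$ in its range), but this does not matter --- the vanishing of the corner follows, as in the paper, solely from the facts that no positive-length path can have source the sink $[w_i']$ and that the two singleton classes are orthogonal.
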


\begin{proof}
Let $I_{(H,B)}$ be a primitive ideal in $C^*(\g)$. Then $C^*(\gh)\cong C^*(\g)/I_{(H,B)}$ is a primitive $C^*$-algebra. We claim that $|B_H\setminus B|\leq 1$. Indeed, if $w_1,w_2$ are two distinct vertices in $B_H\setminus B$, similar to the proof of Propositions \ref{prop7.3} and \ref{prop7.4}, the primitivity of $C^*(\gh)$ implies that the corner $q_{[w_1']} C^*(\gh) q_{[w_2']}$ is nonzero. So, there exist $\alpha,\beta\in (\gh)^*$ such that $q_{[w_1']}t_\alpha t_\beta^* q_{[w_2']}\neq 0$. But we must have $|\alpha|=|\beta|=0$ because $[w'_1],[w'_2]$ are two sinks in $\gh$.  Hence, $q_{[w_1']}q_{[w_2']}\neq 0$ which is impossible because $q_{[w_1']}q_{[w_2']}=q_{[\{w_1'\}\cap \{w_2'\}]}=q_{[\emptyset]}=0$. Thus, the claim holds. Now we may apply Propositions \ref{prop7.3} and \ref{prop7.4} to obtain the result.
\end{proof}

Corollary \ref{cor6.8} says that if $\g$ satisfies Condition (K), then all ideals of $C^*(\g)$ are of the form $I_{(H,B)}$. So, we have the following.

\begin{cor}
If an ultragraph $\g$ satisfies Condition (K), then Theorem \ref{thm7.6} describes all primitive ideals of $C^*(\g)$.
\end{cor}

In the end of paper, we establish primitive gauge invariant ideals of a quotient ultragraph $C^*$-algebra $C^*(\gh)$ by applying Theorem \ref{thm7.6}. Recall from Theorem \ref{thm5.3} that every gauge invariant ideal of $C^*(\gh)$ is of the form $J_{[K,S]}$ with $H\subseteq K$ and $B\subseteq K\cup S$. Since $$\frac{C^*(\gh)}{J_{[K,S]}}\cong C^*(\g/(K,S)) \cong \frac{C^*(\g)}{I_{(K,S)}}$$
by Theorem \ref{thm5.3}, a gauge invariant ideal $J_{[K,S]}$ of $C^*(\gh)$ is primitive if and only if $I_{(K,S)}$ is a primitive ideal of $C^*(\g)$. Therefore, Theorem \ref{thm7.6} conclude that:

\begin{thm}
Let $\gh$ be a quotient ultragraph of $\g$. A gauge invariant ideal $J_{[K,S]}$ of $C^*(\gh)$ is primitive if and only if one of the following conditions holds:
\begin{enumerate}
\item $S=B_K$, $\go\setminus K$ is downward directed, and every loop in $\g\setminus K$ has an exit in $\g\setminus K$.
\item $S=B_K\setminus\{w\}$ for some $w\in B_K$, and $A\geq w$ for all $A\in \go\setminus K$.
\end{enumerate}
In particular, if $\gh$ satisfies Condition (K), these conditions characterize all primitive ideals of $C^*(\gh)$.
\end{thm}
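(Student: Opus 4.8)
The plan is to reduce everything to Theorem \ref{thm7.6} via the iterated-quotient identification already established in Theorem \ref{thm5.3}. The starting observation is that by Theorem \ref{thm5.3}(2) we have the chain of isomorphisms $C^*(\gh)/J_{[K,S]}\cong C^*(\g/(K,S))\cong C^*(\g)/I_{(K,S)}$, and these isomorphisms are canonical (they respect the generators), so $J_{[K,S]}$ is a primitive ideal of $C^*(\gh)$ precisely when the quotient $C^*(\gh)/J_{[K,S]}$ is a primitive (equivalently prime, by separability and \cite{dix}) $C^*$-algebra, which in turn holds precisely when $C^*(\g)/I_{(K,S)}$ is prime, i.e. when $I_{(K,S)}$ is a primitive ideal of $C^*(\g)$.

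First I would make this reduction explicit: assume $J_{[K,S]}$ is gauge invariant in $C^*(\gh)$, so by Theorem \ref{thm5.3}(1) the pair $(K,S)$ is admissible in $\g$ with $H\subseteq K$ and $B\subseteq K\cup S$. Apply the displayed isomorphism to conclude that $J_{[K,S]}$ is primitive in $C^*(\gh)$ iff $I_{(K,S)}$ is primitive in $C^*(\g)$. Then I would simply invoke Theorem \ref{thm7.6} with the admissible pair $(K,S)$ in place of $(H,B)$: $I_{(K,S)}$ is primitive iff either (1) $S=B_K$, $\go\setminus K$ is downward directed, and every loop in $\g\setminus K$ has an exit in $\g\setminus K$; or (2) $S=B_K\setminus\{w\}$ for some $w\in B_K$ and $A\geq w$ for all $A\in\go\setminus K$. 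This is exactly the stated list of conditions, so the first assertion follows.

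For the final sentence, I would use Corollary \ref{cor6.7}: if $\gh$ satisfies Condition (K), then by the equivalence of (1)--(4) there every $\g/(K,S)$ with $H\subseteq K$ satisfies Condition (L), and moreover (condition (4) of that corollary, together with Theorem \ref{thm6.6}) every ideal of $C^*(\gh)$ is gauge invariant, hence of the form $J_{[K,S]}$ by Theorem \ref{thm5.3}. Therefore the characterization of primitive $J_{[K,S]}$ just obtained describes \emph{all} primitive ideals of $C^*(\gh)$.

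The one point requiring a little care — the ``main obstacle,'' though it is minor — is bookkeeping on the side conditions: Theorem \ref{thm7.6} is stated for an arbitrary admissible pair $(H,B)$ in $\g$, and I must check that applying it to $(K,S)$ is legitimate, i.e. that $(K,S)$ is genuinely admissible (which Theorem \ref{thm5.3}(1) guarantees) and that the constraints $H\subseteq K$, $B\subseteq K\cup S$ do not interfere with the statement (they do not — they only restrict \emph{which} admissible pairs arise, not the form of the primitivity criterion). One should also note that the notions ``loop in $\g\setminus K$'' and ``exit in $\g\setminus K$'' from Definition preceding Theorem \ref{thm7.6} are intrinsic to $\g$ and $K$ and make sense verbatim here, so no reformulation is needed. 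With these remarks the proof is essentially a citation of Theorem \ref{thm7.6} and Theorem \ref{thm5.3}.
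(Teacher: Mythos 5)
Your proposal is correct and follows essentially the same route as the paper: the paper likewise passes through the isomorphisms $C^*(\gh)/J_{[K,S]}\cong C^*(\g/(K,S))\cong C^*(\g)/I_{(K,S)}$ from Theorem \ref{thm5.3} to conclude that $J_{[K,S]}$ is primitive exactly when $I_{(K,S)}$ is, and then cites Theorem \ref{thm7.6}. Your extra bookkeeping remarks (admissibility of $(K,S)$ and the appeal to Condition (K) for the final sentence) match the paper's implicit reasoning.
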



\end{document}